\documentclass{amsart}
%\TagsOnRight
%%%%Definitions
\usepackage{amssymb,amscd,verbatim, amsthm,amsmath,amsgen,xspace}

\usepackage[normalem]{ulem}

%\usepackage[small,nohug,heads=vee]{diagrams}
%\diagramstyle[labelstyle=\scriptstyle]

%\usepackage[all,cmtip]{xy}

%\usepackage{2up,twoup}
\special{xdvi:papersize=11in, 8.5in}
\usepackage{graphicx,color}

\newcommand\clrr{\color{red}}
\newcommand\clrblu{\color{blue}}

\newcommand\ovl{\overline}

\newcommand \wh[1]{\widehat{#1}}

\newcommand \fk[1]{{{\mathfrak #1}}}
\newcommand \C[1]{{\mathcal #1}}
\newcommand \ov[1]{{\overline {#1}}}
\newcommand \ch[1]{{\check{#1}}}
\newcommand \bb[1]{{\mathbb #1}}

\newcommand \wti[1]{{\widetilde {#1}}}
\newcommand \wht[1]{{\widehat {#1}}}

\newcommand \bC{{\bb C}}

\newcommand \bR{{\bb R}}

\newcommand\ie{{\it i.e.~ }}

\newcommand\eg{{\it e.g.~ }}

\newcommand\ep{{\epsilon}}
\newcommand\eps{{\epsilon}}
\newcommand\la{{\lambda}}

\newcommand\al{{\alpha}}
\newcommand\sig{{\sigma}}

\newcommand\twedge{\textstyle{\bigwedge}}

\newcommand\Ad{\operatorname{Ad}}
\newcommand\Ext{\operatorname{Ext}}
\newcommand\Hom{\operatorname{Hom}}

\newcommand\Ind{\operatorname{Ind}}

\newcommand\Pin{\operatorname{Pin}}

\newcommand\diag{\operatorname{diag}}
\newcommand\reg{\operatorname{reg}}
\newcommand\DI{\operatorname{DI}}

\renewcommand\dim{\operatorname{dim}}

\renewcommand\ch{\operatorname{ch}}

\renewcommand\det{\operatorname{det}}
\newcommand\EP{\operatorname{EP}}
\renewcommand{\ell}{\operatorname{ell}}

\newcommand\im{\operatorname{Im}}
\renewcommand\ker{\operatorname{ker}}
\newcommand\coker{\operatorname{coker}}

\newcommand\supp{\operatorname{supp}}

\newcommand\Tr{\operatorname{tr}}
\newcommand\Id{\operatorname{Id}}

\newcommand\rank{\operatorname{rank}}
\newcommand\Spin{\operatorname{Spin}}

\newcommand\Cas{\operatorname{Cas}}
\renewcommand\top{\operatorname{top}}
\newcommand{\tr}{\operatorname{tr}}

\newcommand{\Rea}{\operatorname{Re}}

\newcommand\bX{{\ov{X}}}

\newcommand{\pf}{\begin{proof}}
\newcommand{\epf}{\end{proof}}
\newcommand{\eq}{\begin{equation}}
\newcommand{\eeq}{\end{equation}}
\newcommand{\eqn}{\begin{equation*}}
\newcommand{\eeqn}{\end{equation*}}
\newcommand\bed{\begin{definition}}
\newcommand\ebed{\end{definition}}
\newcommand\bethm{\begin{theorem}}
\newcommand\ebethm{\end{theorem}}
\newcommand\bealigned{\begin{aligned}}
\newcommand\ebealigned{\end{aligned}}

\newcommand{\fra}{\mathfrak{a}}
\newcommand{\frb}{\mathfrak{b}}

\newcommand{\frg}{\mathfrak{g}}
\newcommand{\frh}{\mathfrak{h}}
\newcommand{\frk}{\mathfrak{k}}

\newcommand{\frm}{\mathfrak{m}}
\newcommand{\frn}{\mathfrak{n}}
\newcommand{\fro}{\mathfrak{o}}
\newcommand{\frp}{\mathfrak{p}}

\newcommand{\frs}{\mathfrak{s}}
\newcommand{\frt}{\mathfrak{t}}
\newcommand{\fru}{\mathfrak{u}}

\newcommand{\bbC}{\mathbb{C}}

\newcommand{\bbR}{\mathbb{R}}

\newcommand{\bbZ}{\mathbb{Z}}
%%
%

%
%%
%
%
%%%%%
%

%
%
%
%%%%%%%%% Matrices, etc %%%%%%%%%%%%%%%%%%%%%%%%%%%%%

\newtheorem{theorem}[equation]{Theorem}
\newtheorem{corollary}[equation]{Corollary}

\newtheorem{lemma}[equation]{Lemma}
\newtheorem{proposition}[equation]{Proposition}
\newtheorem{definition}[equation]{Definition}
\newtheorem{remark}[equation]{Remark}

\numberwithin{equation}{section}

\begin{document}
%\today

\bigskip
\title{Dirac index and twisted characters}

\author{Dan Barbasch}
      \address[D. Barbasch]{Department of Mathematics\\
               Cornell University\\Ithaca, NY 14850, U.S.A.}
        \email{barbasch@math.cornell.edu}
\thanks{Dan Barbasch was supported by  NSA grant H98230-16-1-0006}
\author{Pavle Pand\v zi\'c}
	\address[P. Pand\v zi\'c]{Department of Mathematics,
          University of Zagreb, Bijeni\v cka 30, 10000 Zagreb,
          Croatia} 
	\email{pandzic@math.hr}
\thanks{P.~Pand\v zi\'c was supported by grant no. 4176 of the
  Croatian Science Foundation and by the QuantiXLie Center of Excellence.} 
\author{Peter Trapa}
\address[P. Trapa]{Department of Mathematics, University of Utah, Salt
  Lake City, UT 84112, U.S.A.}
\email{ptrapa@math.utah.edu} 

\begin{abstract} 
Let $G$ be a real reductive Lie group with maximal compact subgroup
$K$. We generalize the usual notion of Dirac index to a twisted
version, which is nontrivial even in case $G$ and $K$ do not have
equal rank.  We compute ordinary and twisted indices of standard
modules. As applications, we study extensions of Harish-Chandra
modules and twisted characters. 
\end{abstract}
 
\maketitle

\bigskip

\section{Introduction}\label{section intro}
\subsection{}
{
The Dirac operator has played an important role in representation
theory, in particular the realization and properties of the Discrete
Series, work of Parthasarathy, Schmid, Atiyah-Schmid. On the other
hand, the use of the Dirac operator has been an important tool in  the
determination of the unitary dual and cohomology of discrete groups
via the Dirac inequality, as it appears in the work of Borel, Enright,
Kumaresan, Parthasarathy, Salamanca-Riba, Vogan, Zuckerman and many
others. This has led to the notion of Dirac cohomology, introduced by
Vogan, and developed further by the work of Huang, Pand\v zi\' c and others.

\medskip
In this paper we develop further the connections between the Dirac
cohomology of a representation and its distribution
character as well as its possible extensions as encoded in the
$\Ext-$groups. We work in a more general setting than the 
results alluded to earlier, for example  our results are for groups of unequal
rank and possibly disconnected. An important  motivating example is the case 
of  a complex group viewed as a real group.  Most of
the results are stated for the case of a real Lie group which is the
real points of a linear algebraic connected reductive Lie group, though they 
might hold for a larger class.

The next sections in the introduction review 
the known material on Dirac cohomology with particular attention to
the modifications necessary to treat the possible disconnectedness of
the group. We then give a detailed statement of the results.   
}
\subsection{}
Let $G:=G(\mathbb R)$ be the group of real points of a reductive linear
algebraic connected group $G(\bC)$. 
In particular, $G$ is in the {\it Harish-Chandra class}, \ie  it has only finitely many connected components,
the derived group $[G,G]$ has finite center, and the automorphisms $\Ad(g)$, $g\in G$, of  the complexified Lie algebra
$\frg$ of $G$ are all inner. Furthermore the Cartan subgroups are abelian.

\medskip
Let $\theta$ be  a Cartan involution of $G$ and 
$K=G^\theta$ the corresponding maximal compact subgroup. We do not assume that $K$ is connected. 
Let $\frg_0=\frk_0\oplus\frs_0$ be the Cartan decomposition of the
Lie algebra $\frg_0$ of $G$ corresponding to $\theta$. We denote the 
linear extension of $\theta$ to the complexification $\frg$ of
$\frg_0$ again by
$\theta$; let $\frg=\frk \oplus\frs$ be the corresponding Cartan decomposition
(\ie the decomposition into the $\pm 1$ eigenspaces for $\theta$). 

Fix $B$, a nondegenerate  invariant symmetric bilinear form on $\frg$ which is
negative definite on $\fk k_0$ and positive definite on $\frs_0.$
Let $C(\frs)$ be the Clifford algebra of $\frs$ with  respect to  $B$
and let $U(\frg)$ be the universal enveloping algebra of $\frg$. 

The Dirac operator $D$ is defined as
\[
D=\sum_i b_i\otimes d_i\in U(\frg)\otimes C(\frs),
\]
where $b_i$ is any basis of $\frs$ and $d_i$ is the dual basis with
respect to $B$. Then $D$ is independent of the choice of the basis
$b_i$. The square of $D$ is given by
the following formula due to  Parthasarathy \cite{P1}:
\eq
\label{Dsquared}
D^2=-(\Cas_\frg\otimes 1+\|\rho_\frg\|^2)+(\Cas_{\frk_\Delta}+\|\rho_\frk\|^2).
\eeq
Here $\Cas_\frg$ is the Casimir element of $U(\frg)$ and
$\Cas_{\frk_\Delta}$ is the Casimir element of $U(\frk_\Delta)$, 
where $\frk_\Delta$ is the diagonal copy of $\frk$ in $U(\frg)\otimes
C(\frs)$, defined using the obvious embedding 
$\frk\hookrightarrow U(\frg)$ and the usual map
$\frk\to\frs\fro(\frs)\to C(\frs)$. See \cite{HP2} for details. 

\begin{definition}
\label{def pin}
We will denote by $K^\dagger$ the {\it pin double cover} of $K$, and by
superscript $\dagger$ the various double covers of subgroups of $K$. 
(We note that it is more usual to denote the pin or spin double covers by superscript $\wti{\ }$, but in this paper we reserve this notation for a component of the extended group defined below.)

Recall that $K^\dagger$ is obtained from the pullback diagram
\[
\begin{CD}
K^\dagger @>>> \Pin(\frs_0) \\
@VVV @VVpV \\
K @>>> O(\frs_0),
\end{CD}
\]
where the bottom arrow is the action map and the right arrow is the
canonical double covering map (see e.g. \cite{HP2} for the definition of the
$\Pin$ group and the covering map).

In other words,
\[
K^\dagger=\{(k,g)\in K\times\Pin(\frs_0)\,\big|\, \Ad(k)\big|_{\frs_0}= p(g)\},
\]
where $p:\Pin(\frs_0)\to O(\frs_0)$ is the covering map.
\end{definition}

If $X$ is a $(\frg,K)-$module, and if $S$ is a spin module for $C(\frs)$,
then $X\otimes S$ is a $(U(\frg)\otimes C(\frs),K^\dagger)-$module, with the
action of $u\otimes c\in U(\frg)\otimes C(\frs)$ given by
\[
(u\otimes c)(x\otimes s)=ux\otimes cs,\qquad x\in X, s\in S,
\]
and the action of $(k,g)\in K^\dagger$ given by
\[
(k,g)(x\otimes s)=kx\otimes gs,\qquad x\in X, s\in S.
\]
(Recall that $gs$ is defined since $\Pin(\frs_0)\subset C(\frs_0)\subset C(\frs)$.)
Here we consider $(U(\frg)\otimes C(\frs),K^\dagger)$ as a pair in the usual sense,
with $(k,g)\in K^\dagger$ acting on $u\otimes c\in U(\frg)\otimes C(\frs)$ by
\[
(k,g)(u\otimes c)=\Ad(k)u\otimes gcg^{-1},
\]
and with the Lie algebra $\frk$ of $K^\dagger$ embedded into $U(\frg)\otimes C(\frs)$
as $\frk_\Delta$. (Note that our definition of the $K^\dagger-$action on $U(\frg)\otimes C(\frs)$
is different from the one used in \cite{DH}, so the problem noticed in \cite{DH}, p.41--42,
does not appear here.)

In particular, $D$ acts on $X\otimes S$ and one can define the Dirac cohomology of $X$ as 
\[
H_D(X)=\ker D / \ker D\cap\im D.
\]
$H_D(X)$ is a $K^\dagger-$module, since the elements of $K^\dagger$ which map to
the even part $\Spin(\frs_0)$ of $\Pin(\frs_0)$ commute with $D$, while
the elements of $K^\dagger$ which map to
the odd part of $\Pin(\frs_0)$ anticommute with $D$.

In the rest of the paper we assume that $X$ is admissible and has infinitesimal character. In particular,
it follows that the Dirac cohomology $H_D(X)$ is finite-dimensional.

If $X$ is unitary or finite-dimensional, then
\[
H_D(X)=\ker D=\ker D^2.
\]
Let $\frh_0=\frt_0\oplus\fra_0$ be a fundamental Cartan subalgebra of
$\frg_0$, and $\frh=\frt\oplus\fra$ its complexification. 
We view $\frt^*$ as a subspace of $\frh^*$ by extending
functionals on $\frt$ by 0 over $\fra$. We fix compatible positive
root systems $R^{+}_{\frg}$ and $R^{+}_{\frk}$ for $(\frg,\frh)$
respectively $(\frk,\frt)$. In particular, this  determines
\textit{half sums of roots} 
$\rho_\frg$ and $\rho_\frk$ as usual. Write $W_{\frg}$
(resp. $W_{\frk}$) for the Weyl group associated with the roots of 
$(\frg,\frh)$ (resp. $(\frk,\frt)$). We identify
infinitesimal characters with elements of $\frh^*$ via the
Harish-Chandra isomorphism.  

{
The following result was conjectured by Vogan \cite{V}, and proved in \cite{HP1} for connected $G$. In this paper we are primarily concerned with disconnected groups. The extension to this case is in \cite{DH}, and follows from \cite{HP1} combined with the remark that  Dirac cohomology commutes with restriction to the connected component. 
}  

\begin{theorem}
\label{HPmain}
Let $X$ be a $(\frg,K)-$module with infinitesimal character
$\Lambda\in\frh^*$. Assume that $H_D(X)$ contains the irreducible
$K^\dagger-$module $E_\gamma$ with highest weight $\gamma\in\frt^*\subset\frh^*$.  
(When $K$ and $K^\dagger$ are disconnected, we follow the conventions
of \cite{greenbook}, Section
  5.1. Essentially, $\gamma$ can be any choice of a  highest weight of
  the restriction of $E_\gamma$ 
  to the connected component $K^\dagger_0$. )

Then $\gamma+\rho_\frk=w\Lambda$ for some $w\in W_\frg$. In other
words, the $\frk-$infinitesimal character of any $K^\dagger-$type
contributing to $H_D(X)$  is conjugate to the $\frg-$infinitesimal
character of $X$ by the Weyl group $W_\frg$. 
\end{theorem}

Further work on Dirac cohomology and  unitary modules (\cite{B},
\cite{Bs}),  includes partial answers to the following Problem:

\medskip
\centerline{\textit {Determine the Dirac cohomology of any irreducible
    unitary  module.}} 

\bigskip
There are also various applications, \eg  relations to other kinds of
cohomology. For more details, see \cite{HP1}, \cite{HP2}, \cite{HPR},
\cite{HKP}, \cite{BP1} and \cite{BP2}. Similarly there is a parallel
theory for affine graded Hecke algebras, \cite{BCT}. 

\bigskip

The following notion is a generalization of the usual notion of Dirac index in the equal rank case (we recall the usual notion below). 
Before defining this generalized Dirac index, we recall the notion of virtual modules.
{
\bed 
\label{def virtual}
For an arbitrary compact group $\C K,$ let $\C C$ be the category of
admissible representations of $\C K$. In other words, each object of $\C
C$ is a direct sum of irreducible (finite-dimensional) representations
of $\C K$, with finite (nonnegative) multiplicities. 

The Grothendieck group of $\C C$ is then
\[
\C G(\C C)=\bbZ^{\widehat{\C K}}=\{\sum_{\gamma\in\widehat{\C K}}
n_\gamma\gamma\,\big|\, n_\gamma\in\bbZ\}, 
\]
with the obvious addition, and $\widehat{\C K}$ denotes the set of
irreducible representations of $K$.
We call the elements of $\C G(\C C)$ \textit{virtual $K-$modules}. 
\ebed
}
In the context of this paper, $\C K$ may refer to the maximal compact
subgroup $K\subset G,$ or one of its covers $K^\dagger$ 
or $(K^+)^\dagger$.

In other words, virtual $\C K-$modules are $\bbZ-$linear combinations of
irreducible $\C K-$modules. For each object $M$ of $\C C$, its image is
denoted by $[M]$. When no confusion arises, we will omit the brackets
and denote $[M]$ simply by $M$. 

{
The equivalence of the Grothendieck group with $\bb Z-$linear
combinations of characters is standard}. In the special case
when $\C K=\{1\}$, virtual $\C K-$modules are just virtual vector
spaces; in this case $\C C$ is the category of finite-dimensional vector
spaces, and $\C G(\C C)$ can be identified with $\bbZ$ via taking the
dimension. 

\bigskip 
Let $\gamma$ be an  automorphism of $(U(\fk g)\otimes C(\frs),K^\dagger)$. This means the following:
\begin{enumerate}
\item $\gamma$ consists of an
automorphism $\gamma^\frg$ of $U(\frg)\otimes C(\frs)$ and an automorphism $\gamma^K$ of $K^\dagger$;
\item $\gamma$ is compatible with the action of $K^\dagger$ on $U(\frg)\otimes C(\frs)$ in the sense that
\[
\gamma^\frg((k,g)(u\otimes c))=\gamma^K(k,g)\gamma^\frg(u\otimes c)
\]
for $(k,g)\in K^\dagger$ and $u\otimes c\in U(\frg)\otimes C(\frs)$;
\item the differential of $\gamma^K$ coincides with the restriction of $\gamma^\frg$ to $\frk_\Delta$.
\end{enumerate}

Let now $X\otimes S$ be a $(U\frg)\otimes C(\frs),K^\dagger)-$module as above, 
with the action denoted by $\pi$.
We assume that $X\otimes S$ has a compatible action of $\gamma$, \ie, there is an operator
$\pi(\gamma)$ on $X\otimes S$ such that  
$$
\bealigned
&\pi(\gamma)\pi(u\otimes c)\pi(\gamma)^{-1}=\pi(\gamma^\frg(u\otimes c)),\quad u\otimes c\in U(\frg)\otimes C(\frs);\\
&\pi(\gamma)\pi(k)\pi(\gamma)^{-1}=\pi(\gamma^K(k)),\quad k\in K^\dagger.
\ebealigned
$$
We assume in the following that $\gamma$ is an involution, and that
\[
\gamma(D)=-D,
\]
so $\pi(\gamma)$ and $\pi(D)$ anticommute. Then $\gamma$ preserves $H_D(X)$, and $H_D(X)$ splits into $\pm 1$ eigenspaces for $\gamma$:
\[
H_D(X)=H_D(X)^+\oplus H_D(X)^-.
\]
If we denote the fixed points of $\gamma$ in $K^\dagger$ by $K^\dagger_\gamma$, then $K^\dagger_\gamma$ preserves the above decomposition, and
we define the $\gamma-$index of $D$ on $X\otimes S$ as the function
\eq
\label{chi gamma}
\chi_\gamma^X(k)=\tr(\gamma k; H_D(X))=\tr(k; H_D(X)^+)-\tr(k;H_D(X)^-),\qquad k\in K^\dagger_\gamma.
\eeq
Equivalently, the $\gamma-$index of $D$ on $X\otimes S$ is the virtual $K^\dagger_\gamma-$module
\eq
\label{def tw index}
I_\gamma(X)= H_D(X)^+ -H_D(X)^-.
\eeq
We can also decompose all of $X\otimes S$ into the $\pm 1$ eigenspaces for $\gamma$:
\eq
\label{pmdec}
X\otimes S = (X\otimes S)^+\oplus (X\otimes S)^-,
\eeq
and this decomposition is invariant under $K^\dagger_\gamma$.
Since $D$ anticommutes with $\gamma$, it interchanges these
eigenspaces, so we get two operators,
\[
D^{\pm}: (X\otimes S)^\pm\to (X\otimes S)^\mp.
\]
Now we assume that $X\otimes S$ is admissible for $K^\dagger_\gamma$; in the examples we are interested in, this will be true.
Then we have

\begin{proposition}
\label{general index}
With notation and assumptions as above, 
\[
I_\gamma(X)=(X\otimes S)^+ - (X\otimes S)^-
\]
as $K^\dagger_\gamma-$modules. 
Furthermore, $I_\gamma(X)$ is the index of the operator $D^+$, \ie
$I_\gamma(X)=\ker D^+ - \coker D^+$, while the index of the operator
$D^-$ is -$I_\gamma(X)$.  
\end{proposition}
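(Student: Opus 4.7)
The plan is to run a McKean--Singer style supersymmetric cancellation. First I would decompose $X\otimes S$ into $D^2$-eigenspaces: by Parthasarathy's formula \eqref{Dsquared}, the assumption that $X$ has infinitesimal character, and the fact that $\Cas_{\frk_\Delta}$ acts by a scalar on each $\Kt$-isotypic component, $D^2$ acts by a scalar on the $\mu$-isotypic part of $X\otimes S$ for each $\mu\in\widehat{\Kt}$. Hence
\[
X\otimes S=\bigoplus_{c}W_c,\qquad W_c=\ker(D^2-c),
\]
each summand being $\Kt$-stable, $\gamma$-stable (because $\gamma$ commutes with $D^2$, as $\gamma^2=1$ and $\gamma D=-D\gamma$), and $D$-stable. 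In particular, each $W_c$ splits as $W_c^+\oplus W_c^-$ under $\gamma$, and $D$ swaps the two pieces.

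The core step is the identity
\[
W_c^+-W_c^-=H_D(W_c)^+-H_D(W_c)^-\qquad\text{as virtual }\Kt_\gamma\text{-modules}
\]
for each $c$, verified case-by-case. For $c\neq 0$, one has $D^2=cI$ on $W_c$, so $D$ is invertible there and $H_D(W_c)=0$; one needs to check that the bijection $D\colon W_c^+\to W_c^-$, combined with the compatibilities of $D$, $\gamma$ and the $\Spin$/$\Pin$ grading of $\Kt$, forces $W_c^+-W_c^-=0$ as a virtual $\Kt_\gamma$-module. For $c=0$, $D^2=0$ on $W_0$ and $D|_{W_0}$ is a genuine differential; the $\Kt_\gamma$-equivariant short exact sequences $0\to\ker D^\pm\to W_0^\pm\to\im D^\pm\to 0$, together with the identifications $H_D(W_0)^\pm=\ker D^\pm/\im D^\mp$ (which use $\im D^\pm\subseteq\ker D^\mp$ since $D^2=0$), give
\[
W_0^+-W_0^-=(\ker D^++\im D^+)-(\ker D^-+\im D^-)=H_D(W_0)^+-H_D(W_0)^-
\]
in the Grothendieck group of admissible virtual $\Kt_\gamma$-modules, which makes sense by the $\Kt_\gamma$-admissibility hypothesis on $X\otimes S$. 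Summing over $c$ yields the first claim.

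For the index interpretation, I would apply the same Grothendieck-group exact-sequence argument globally: the four-term $\Kt_\gamma$-equivariant sequence
\[
0\to\ker D^+\to (X\otimes S)^+\xrightarrow{D^+}(X\otimes S)^-\to\coker D^+\to 0
\]
has vanishing alternating sum in the Grothendieck group, so $\ker D^+-\coker D^+=(X\otimes S)^+-(X\otimes S)^-=I_\gamma(X)$, showing that $I_\gamma(X)$ is the index of $D^+$. Swapping the roles of $+$ and $-$ gives the index of $D^-$ as $-I_\gamma(X)$.

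The main obstacle is the cancellation on the nonzero $D^2$-eigenspaces $W_c$. Although $D$ realizes a vector-space isomorphism $W_c^+\simeq W_c^-$, it is only $\Kt_\gamma$-equivariant for the subgroup of $\Kt_\gamma$ lying over $\Spin(\frs_0)$, while anticommuting with the action of the remaining coset; combined with the fact that $\gamma$ also anticommutes with $D$ and fixes all of $\Kt_\gamma$, one must keep careful track of signs to confirm that the resulting virtual $\Kt_\gamma$-character does indeed vanish, thereby decoupling the non-harmonic $W_c$ ($c\neq0$) from the cohomological contribution coming from $W_0$.
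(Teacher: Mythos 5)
Your route is the same as the paper's: decompose $X\otimes S$ into $D^2$-eigenspaces $W_c$, note that the decomposition is compatible with the $\gamma$-grading, argue that the nonzero eigenspaces contribute nothing to the virtual difference $(X\otimes S)^+-(X\otimes S)^-$ because $D^\pm$ are isomorphisms there, use the Euler--Poincar\'e principle on $W_0$ where $D$ is a differential, and handle the index statement with the four-term exact sequence. The paper's proof is exactly this, only stated in one line.

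The worry you raise in your final paragraph is a genuine subtlety, and neither your write-up nor the paper resolves it. Concretely: for $k\in\Kt_\gamma$ lying over $O(\frs_0)\setminus SO(\frs_0)$, $\pi(k)$ \emph{anticommutes} with $D$. The cleanest way to see the cancellation on a nonzero eigenspace is to observe that $\pi(\gamma)\pi(k)$, for $k$ even, \emph{anticommutes} with the invertible operator $D|_{W_c}$ and is therefore conjugate to its own negative, forcing $\tr(\gamma k,W_c)=\tr(k,W_c^+)-\tr(k,W_c^-)=0$. For $k$ odd this fails: $\pi(\gamma)\pi(k)$ \emph{commutes} with $D$, and $D$ only furnishes an isomorphism $W_c^-\cong W_c^+\otimes\alpha$, where $\alpha$ is the parity character of $\Kt_\gamma$, not $W_c^-\cong W_c^+$. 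The Euler--Poincar\'e step on $W_0$ has the same defect: the graded Euler--Poincar\'e principle for the $\gamma$-graded complex $(W_0,D)$ requires $\pi(k)$ to commute with $D$, which again holds only for even $k$; for odd $k$ the discrepancy between $\tr(k,W_0^+)-\tr(k,W_0^-)$ and $\tr(k,H_D^+)-\tr(k,H_D^-)$ is $2\tr(k,\operatorname{Im}D^-)-2\tr(k,\operatorname{Im}D^+)$, which is not obviously zero. So the argument as given establishes the Proposition only after restriction to the subgroup of $\Kt_\gamma$ lying over $\Spin(\frs_0)$. In the paper's Example 1 (the ordinary index), $\Kt_\gamma$ is by definition exactly this subgroup, so there is no issue; but in Example 2 (the twisted index) $\Kt_\gamma=\Kt$ may contain odd elements (for instance the lift of $\theta$ when $\dim\frs_0$ is odd), and the Proposition is later evaluated precisely on such elements in the twisted character formula. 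To close the gap you need either a supplementary argument handling the odd coset, or an explicit hypothesis restricting to the even part.
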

\pf  We can decompose $X\otimes S$ into eigenspaces of $D^2$, and this
is compatible with (\ref{pmdec}). It is clear that $D^\pm$ are
isomorphisms on any eigenspace for a nonzero eigenvalue. Hence for the
first statement we have to consider only the zero eigenspace, where
$D$ is a differential, and so the claim follows from the
Euler-Poincar\'e principle. The other statements are also easy. 
\epf

Before passing to examples, we discuss the special case when $\gamma$ is
constructed from an automorphism $\gamma_1$ of the pair $(\frg,K)$, and an
automorphism $\gamma_2$ of $C(\frs)$. As before, to be called an automorphism
of the pair $(\frg,K)$, $\gamma_1$ should consist of an automorphism $\gamma_1^\frg$
of $\frg$ and an automorphism $\gamma_1^K$ of $K$, such that the differential of
$\gamma_1^K$ equals the restriction of $\gamma_1^\frg$ to $\frk$, and such that
\[
\gamma_1^\frg(\Ad(k)Y)=\Ad(\gamma_1^K(k))\gamma_1^\frg(Y),\qquad k\in K,Y\in\frg.
\]
Now we set 
\[
\gamma^\frg=\gamma_1^\frg\otimes \gamma_2:U(\frg)\otimes C(\frs)\to U(\frg)\otimes C(\frs)
\]
and
\[
\gamma^K(k,g)=(\gamma_1^K(k),\gamma_2(g))\qquad (k,g)\in K^\dagger.
\]
It is easy to check that $\gamma^\frg$ and $\gamma^K$ define an automorphism of the pair
$(U(\frg)\otimes C(\frs),K^\dagger)$, if $\gamma^K$ is well defined, \ie 
\[
(k,g)\in K^\dagger\quad \Rightarrow\quad (\gamma_1^K(k),\gamma_2(g))\in K^\dagger.
\]
In other words, the condition on $\gamma_1$ and $\gamma_2$ is
\eq
\label{condition gamma}
\Ad(\gamma_1^K(k))\big|_{\frs_0}=p(\gamma_2(g)),\qquad \text{for all }(k,g)\in K^\dagger.
\eeq
{As before, we assume that $\gamma(D)=-D.$ }

\bigskip
We now present two examples of the above setting; they are the
main objects of study in this paper. 

\subsection{Equal Rank Case}
The first example is the ordinary Dirac index in the equal rank
case. Let  $\frh_0=\frt_0$ be the compact Cartan subalgebra in
$\frg_0$. In this case $\dim\frs$ is even, so there is only one spin
module $S$, 
and it is a graded module for $C(\frs)=C^0(\frs)\oplus C^1(\frs)$, \ie 
$S=S^+\oplus S^-$, with $S^\pm$ preserved by $C^0(\frs)$ and 
interchanged by $C^1(\frs)$. (Recall that $S$ can be constructed
as $\bigwedge\frs^+$ with $\frs^+$ a maximal isotropic subspace of $\frs$,
and that one can take $S^+=\bigwedge^{\text{even}}\frs^+$  and 
$S^-=\bigwedge^\text{odd}\frs^+$.)

Recall that $\theta$ denotes the Cartan involution of $\frg$. It induces $-\Id\in O(\frs_0),$
and so gives rise to two elements in $\Pin(\frs_0).$ It is easy to see that these elements
are
\[
\pm Z_1Z_2\dots Z_s\in C(\frs_0),
\]
where $Z_1,\dots,Z_s$ is any orthonormal basis of $\frs_0$. We fix one of these two elements, 
and call it again $\theta.$  In this way $\theta$ acts on $S$, and one easily checks that 
$S=S^+\oplus S^-$ is the decomposition into eigenspaces of $\theta$. Moreover, we can
make the choice of $\theta$ compatible with the choice of $S^\pm$, so that $\theta$ is 1 on
$S^+$ and $-1$ on $S^-$. Furthermore, we can extend the automorphism $\theta=-\Id$ of
$\frs_0$ to an automorphism of $C(\frs)$, and this automorphism is exactly the conjugation
by the element $\theta\in C(\frs)$. (This automorphism is in fact equal to the sign automorphism
of $C(\frs)$.)

We now consider the automorphism $\gamma$ of $(U(\frg)\otimes C(\frs),K^\dagger)$ constructed from
the automorphisms $\gamma_1=\Id$ of $(\frg,K)$ and $\gamma_2=\theta$ of $C(\frs)$. To see that
this makes sense, we have to check the condition (\ref{condition gamma}). This however immediately
follows from $\gamma_1^K=\Id$ and from
\[
p(\theta g\theta^{-1})=p(\theta)p(g)p(\theta^{-1})=(-\Id)p(g)(-\Id)=p(g).
\]
It is clear that $\gamma$ is an involution, and that $\gamma(D)=-D$. Moreover,
since $\theta$ is an inner automorphism of $C(\frs)$, it is clear that
$\gamma$ automatically 
acts on $X\otimes S$ for any $(\frg,K)-$module $X$. It is furthermore
clear that  
\eq
\label{K gamma ord}
K^\dagger_\gamma=\{(k,g)\in K^\dagger\,\big|\, g\in\Spin(\frs_0)\}.
\eeq
In particular, $K^\dagger_\gamma$ contains the connected component of $K^\dagger$.

We can now consider the $\gamma-$index of $D$ on $X\otimes S$, which we denote simply
by $I(X)$ in the present case. It is given by
(\ref{chi gamma}) or (\ref{def tw index}), with properties described in Proposition \ref{general index}.
In particular,
\eq
\label{index formula}
I(X)=X\otimes S^+ - X\otimes S^-.
\eeq
If $K^\dagger_\gamma=K^\dagger$, \ie, the natural map from $K^\dagger$ to $\Pin(\frs_0)$ maps $K^\dagger$ into $\Spin(\frs_0)$, 
then $I(X)$ is the usual index as in \cite{P1} and the work of
Hecht-Schmid and Atiyah-Schmid.  

\medskip
The Dirac index $I(X)$, which can be thought of as the Euler
characteristic of Dirac cohomology, is important for several
reasons. First, it is directly related to the character of $X$ on the
compact Cartan subgroup. 
This follows from (\ref{index formula}); see Section
\ref{sec:characters}, especially formula (\ref{eq:character}), for
more details. One can likewise compute characters from the Euler
characteristic of an appropriate $\frn-$cohomology, but typically
there are cancellations when using $\frn-$cohomology, and no
cancellations when using Dirac cohomology. So one can say that Dirac
cohomology is closer to the character. It is also a simpler invariant,
which is typically easier to compute than $\frn-$cohomology. 

We remark that one can replace modules with fixed infinitesimal character with
arbitrary finite length modules, if one modifies the definition of
Dirac cohomology and index as in \cite{PS}. In this paper we
only consider modules with infinitesimal character, so we do not need
this generalization.

\subsection{Unequal Rank Case}
If $\frg$ and $\frk$ do not have equal rank, then the above usual
notion of index is trivial. Namely, if $\dim\frs$ is even, 
$S^\pm$ do exist, and  
we could try to define $I(X)$ as above. It however turns out that
$S^+$ and $S^-$ are typically isomorphic as
$K^\dagger_\gamma-$modules, so $I(X)=0$ for every $X$; for example,
this is always true if $K^\dagger$ is connected. 
If $\dim\frs$ is odd, then neither of the two spin modules is graded,
so $S^\pm$ can not be defined as above. One could try to use the two
inequivalent spin modules $S_1$ and $S_2$ of $C(\frs)$, but they are
again isomorphic as $K^\dagger-$modules. So the case of real reductive
groups is different from the case of graded affine Hecke algebras
\cite{CT}, \cite{CH}, where there are two inequivalent spin modules
for $\widetilde W$ (the analogue of $K^\dagger$). 
Instead, we consider the extended group  
\[
G^+ = G \rtimes \{1,\theta\},
\] 
with $\theta$ acting on $G$ by the
Cartan involution, and with $\theta^2=1\in G$. 
{(The notation is taken from \cite{W2}.)
}

The maximal compact subgroup of  $G^+$ is
\[
K^+=K\times \{1,\theta\}.
\]
A $(\frg,K^+)-$module  $(\pi,X)$ can be thought of as a $(\frg,K)-$module with an additional action of $\theta$ by $\pi(\theta)$, which satisfies
\begin{equation}
\label{extended module}
\begin{aligned}
\pi(\theta) \pi(k)\pi(\theta) &=\pi(k),\qquad\quad k\in K;\\
\pi(\theta) \pi(\xi)\pi(\theta) &=\pi(\theta(\xi)),\qquad \xi\in \frg.
\end{aligned}
\end{equation}

We now consider the automorphism $\gamma$ of $(U(\frg)\otimes C(\frs),K^\dagger)$ built from the automorphisms $\gamma_1=\theta$ of $(\frg,K)$ and
$\gamma_2=\Id$ of $C(\frs)$. 
{
Here $K^\dagger$ still denotes the $\Pin$ double cover of $K$, not of $K^+$. 
}
The compatibility condition (\ref{condition gamma}) is now trivial, and so is the fact that $\gamma$ is an involution
satisfying $\gamma(D)=-D$. It is also clear that in this case $K^\dagger_\gamma=K^\dagger$.
Moreover, $\gamma$ acts on $X\otimes S$ whenever $X$ is a $(\frg,K^+)-$module.   

We can now consider the $\gamma-$index of $D$ on $X\otimes S$, which we denote 
by $I_\theta(X)$ in the present case, and call the twisted Dirac index of $X$. It is again given by
(\ref{chi gamma}) or (\ref{def tw index}), with properties described in Proposition \ref{general index}.

In particular, we have the following equality of virtual $K^\dagger-$modules

\eq
\label{twisted index formula}
I_\theta(X)= X^+\otimes S - X^-\otimes S, 
\eeq
where $X^\pm$ denote the $\pm 1$ eigenspaces of $\theta$ on $X$.

\medskip
This setting makes sense in the equal rank case as well. Since
$\theta=\Ad k_0$ is inner, any $(\fk g, K)-$module extends naturally
to an $G^+=G\rtimes \{1,\theta\}-$module via
$\pi(\theta)=\pi(k_0).$  The resulting
twisted index is not substantially different from the usual notion of index. 
Namely let $\tilde k_0=(k_0,\theta)\in  K^\dagger$, where
$\theta\in\Spin(\frs_0)$ is the top degree element acting by $\pm 1$
on $S^\pm$. Then $\tilde k_0$ is in $K^\dagger_\gamma$ of \eqref{K  gamma ord}.  

Let $\chi_1$ (respectively $\chi_2$) be the function defined by
\eqref{chi gamma} for the ordinary (respectively twisted) Dirac
index. These functions are both defined for any $k\in
K^\dagger_\gamma$. Since $\tilde k_0^2$ acts as the identity on $S$, we have 
\[
\begin{aligned}
\chi_1(\tilde k_0k)=&\tr(\tilde k_0k;X)\tr(\tilde k_0 k\theta;S)=\tr(\tilde k_0k;X)\tr(\tilde k_0^2 k;S)=\\
=&\tr(\tilde k_0k;X)\tr(k;S)=\chi_2(k).
  \end{aligned}
\]
So we see that the twisted Dirac index $\chi_2$ is the same as the ordinary Dirac index $\chi_1$ with the argument translated by $\tilde k_0$.

\subsection{Summary}
In Section \ref{sec:characters} we explain the relationship of the ordinary Dirac index to the character on the compact Cartan subgroup, and also
the relationship of the twisted Dirac index to the twisted character on the fundamental {\it twisted Cartan subgroup} (Definition \ref{d:twistedcsg}; see also the paragraph below). The main result is Theorem \ref{T:kchar}, which implies that the formulas (\ref{index formula}) and (\ref{twisted index formula}) can
be interpreted as formulas for the character (respectively twisted character) of $X$.

{The material in Section \ref{tw conj cl} has substantial
  overlap with the work of
  Arthur, Kottwitz, Labesse, Langlands, Moeglin, 
Shelstad and others on the
twisted trace formula. In particular the work of  Waldspurger in \cite{W1} gives explicit results for various individual cases. We give a
treatment of what we need, emphasizing the point of view of group cohomology.}

These results are needed for the determination of twisted
characters. Analogous to the regular case, twisted characters are
determined by their values on the strongly semisimple  regular set,
and being invariant under conjugation by $G,$ determined by the values
on the twisted Cartan subgroups. 

For the equal rank groups (and $\theta$) twisted Cartan subgroups
coincide with the usual Cartan subgroups. In the unequal rank case the 
main result  is Theorem \ref{tw conj cx}, which says that for
complex groups viewed as real groups, the conjugacy classes of twisted
regular semisimple elements are in one to one correspondence with
conjugacy classes of involutions in the Weyl group. The analogous
result for unequal rank real groups is of the same nature but more
complicated because there are several conjugacy classes of Cartan
subgroups. We discuss examples in Subsection \ref{s:gln}. 

In Section \ref{sec:ker} we obtain certain integral formulas for
  characters of real induced modules. Such results  are well known
  under somewhat more restrictive hypotheses. Our exposition was
  influenced by the notes of Paul  Garrett, available at 
  www.math.umn.edu/$\sim$garrett/m/v/characters\_ps.pdf.  The main result
  is the character formula  (\ref{eq:tchar2}).  Our main purpose is to
  obtain vanishing of characters on certain conjugacy classes of
  twisted Cartan subgroups, which in turn implies vanishing of the
  Dirac index. 

Section \ref{sec:indices_std} gives a complete discussion of standard modules and their Dirac indices (ordinary and twisted). The main results are
Theorem \ref{t:twcx} which computes twisted indices of standard
modules of complex groups, Theorem \ref{index_std_real} which computes
ordinary indices of standard modules in the equal rank case, and
Theorem \ref{tw_index_std_real} which computes twisted indices for
standard modules of real groups. Note that Theorem \ref{t:twcx} is a
special case of Theorem \ref{tw_index_std_real}, but we treat it
separately because it is easier than the general case. 

In principle, knowing the indices of standard modules, one can compute
indices of all finite length modules (with a fixed infinitesimal
character), by expressing them as $\bbZ-$linear combinations of
standard modules via the Kazhdan-Lusztig algorithm. Conversely, if we
know the index, by computing the Dirac cohomology explicitly, then we
can use the above results to get some of the Kazhdan-Luszitg
coefficients. Examples of this are given in Subsection
\ref{sec:examples}.

Finally, in Section \ref{s:ext}, we study the relationship between
indices (ordinary or twisted) and extensions of
$(\frg,K)-$modules. The main notion we consider is the {\it
  Euler-Poincar\'e pairing} defined as the alternating sum of $\Ext$
groups between two $(\frg,K)-$modules (see (\ref{EP}) below).

\medskip
{Let $\C K$ be a compact group and }{ let $(\mu,X)$ and $(\eta, Y)$ be $\C K-$modules. Then $\Hom_\bb C[X,Y]$ inherits a
  $\C K-$module structure as well:
$$
(k\cdot\la)(x):= \eta({k})\circ \la\circ \mu({k^{-1}}).
$$
If $\C H\subset \C K$ is a normal subgroup, $\Hom_{\C H}[X,Y]$
inherits a structure of $\C K-$module, which in fact drops down to a
$\C K/\C H-$structure. This extends to the Grothendieck group in the
standard way. Precisely if $X=\sum n_iV_i$ and $Y=\sum m_jV_j$ one
applies the definition to $\sum n_im_j\Hom_{\C H}[V_i,V_j].$ 

\medskip
In particular, this applies to $\Ext_{(\fk g, K)}(X,Y)$ which is
the $i-$th cohomology of the complex
\[
\Hom_K(\twedge^\star \frs\otimes X,Y)
\]
with the usual de Rham type differential. 
For any two finite length $(\frg,K)-$modules $X$ and $Y$, the vector spaces $\Ext^i_{(\frg,K)}(X,Y)$ are finite dimensional; see \eg   
\cite{BW}, Chapter 1. Furthermore, $\Ext^i_{(\frg,K)}(X,Y)$ is zero unless $0\leq i\leq s$ where $s=\dim\frs$. Also, if $X$ and $Y$ have infinitesimal character, then all
$\Ext^i_{(\frg,K)}(X,Y)$ vanish unless the infinitesimal characters of  
$X$ and $Y$ are the same.

}

The usual Euler-Poincar\'e pairing is defined on $X$ and $Y$ as the virtual
vector space 
\eq
\label{EP}
\EP(X,Y)=\sum_{i=0}^s (-1)^i\Ext^i_{(\frg,K)}(X,Y).
\eeq
(Since the Grothendieck group of finite-dimensional vector spaces is
isomorphic to $\bbZ$ via taking dimensions, one can also think of
$\EP(X,Y)$ as being an integer.) It is easy to see that $\EP$ is
additive with respect to short exact sequences in each variable, so
it makes sense on the level of the Grothendieck
group of finite length $(\frg,K)-$modules, \ie  $X$ and $Y$ above can
also be virtual $(\frg,K)-$modules. 

\medskip
Assume first that $\rank\frg=\rank\frk$. 
If $X$ and $Y$ are finite-dimensional, the Euler-Poincar\'e principle
implies that $EP(X,Y)$ equals
\begin{equation}
\label{basiceuler}
\begin{aligned}
&\sum_{i} (-1)^i\Ext_{(\frg,K)}^i(X,Y)=\sum_{i}
(-1)^i\Hom_K\left(\textstyle{\bigwedge}^i\fk s\otimes X,Y\right)=\\
&\Hom_K\left(\sum_{i} (-1)^i\textstyle{\bigwedge}^i\fk s\otimes X,Y\right)=
\Hom_K\left((S^+-S^-)\otimes(S^+-S^-)^*\otimes X, Y\right)=\\
&\Hom_{K^\dagger}\left(X\otimes (S^+-S^-), Y\otimes(S^+-S^-)\right)=
\Hom_{K^\dagger}\left(I(X), I(Y)\right).
\end{aligned}
\end{equation}
In the above computation, we have used the fact $\twedge\frs=S\otimes
S^*$, which implies 
$$
\sum_i(-1)^i\twedge^i\fk s=(S^+-S^-)\otimes
(S^+-S^-)^*,
$$
 and also (\ref{index formula}) for Dirac indices of $X$ and $Y.$ 

{
Note that $S\otimes X$ and $S^*\otimes X$ only admit an action
  of $K^\dagger,$ but the action on  $S\otimes S^*\otimes X$ factors
  to $K.$
}
 
For general $X$ and $Y$ the above computation does not make sense, as
one can not take an alternating sum of infinite-dimensional vector
spaces. However, the end result still holds, as asserted by Theorem
\ref{thm_altsum}. The proof uses the fact that standard modules
generate  the Grothendieck group, so it is, at least in principle, enough to
understand $\EP(X,Y)$ in the case when $X$ is a standard 
module $A_\frb(\lambda)$ (see Section \ref{std_real}). This special
case can be handled by a spectral sequence described in Proposition
\ref{zuckspseq}. This enables us to pass to finite-dimensional modules
where we can use (\ref{basiceuler}). This and some additional
computations lead to a proof of  the result. 

\medskip
An analogous result (still in the equal rank case) 
has recently been proved by Huang, Mili\v ci\'c and Sun
\cite{HMS} independently. They prove that the Euler-Poincar\'e pairing
of two finite 
length modules is the same as the elliptic pairing defined in \eqref{def
  ell}. Their result
combined with the equality between the elliptic pairing and the pairing of Dirac indices (\cite{H},  \cite{R}), can be used to derive our
Theorem \ref{thm_altsum}. 

\bigskip
We now drop the equal rank assumption. Let $X$ and $Y$ be modules for
the extended group $G^+=G\rtimes \{1,\theta\}$, where $\theta$ is the
Cartan involution of $G$. This means we consider $X$ and $Y$ as
$(\frg,K)-$modules with a compatible  action of $\theta$, \ie  of the
group $\{1,\theta\}\cong\bbZ_2$, such that (\ref{extended module})
holds. Furthermore $\theta$  acts on 
$\bigwedge^i\frs$, by the scalar $(-1)^i$. Thus  the
complex  
\eq
\label{extcx}
\Hom_K(\twedge\frs\otimes X,Y)\cong\Hom_K(\twedge\frs,\Hom_\bbC(X,Y))
\eeq
is  a virtual module for $K^+/K\cong \{1,\theta\}$, with $\theta$
acting simultaneously on 
$\bigwedge\frs$, $X$ and $Y$. It is easy to check that this action of
$\theta$ commutes with the differential of the complex, so $\theta$
also acts on the cohomology of the complex, \ie  on each
$\Ext^i_{(\frg,K)}(X,Y)$. We can 
now consider
\[
\EP_\theta(X,Y)=\sum_{i=0}^s (-1)^i\Ext^i_{(\frg,K)}(X,Y),
\]
not as a virtual vector space, but as a virtual $\{1,\theta\}-$module.
We want to study the trace of $\theta$ on $\EP_\theta(X,Y)$, in the following sense.

{
\begin{definition}\label{def:c}
Let $\C K$ be a compact group and let $\C H$ be a normal subgroup of $\C K$. 
Let  $V=\sum m_j V_j$ be a finite-dimensional (virtual)
$\C K-$module, with $\C H$ acting trivially.
Then the trace of $k\in\C K/\C H$ on $V$ is the usual
\[
\Tr(k,V)=\sum m_j\Tr(k,V_j).
\]
Since $\C K$ is compact, $k$ acts semisimply. For each
irreducible module,
$$
\Tr(k,V_j)=\sum \tau \dim V_\tau,\qquad V_\tau=\{ v\in V_j\ :\ k\cdot
v=\tau v\}.
$$ 
We will often identify $\Tr (k,V_j)$ with the virtual vector
space 
$$
\Tr(k,V_j)=\sum \tau V_\tau.
$$
 \end{definition}
This definition will be used in the context of $\C K=K^+$ or $(K^+)^\dagger$ and $\C H=K$ or $K^\dagger.$ The element $k$ will simply
be $\theta$ or $k\theta$ with $k\in K.$ 
}

\bigskip
If $X$ and $Y$ are finite-dimensional, we can write the following
equalities of virtual vector spaces:
\begin{equation}
  \label{eq:twistedext}
  \begin{aligned}
&\Tr\big[\theta,\EP_\theta(X,Y)\big]=\\
&\Tr\big[\theta\otimes\theta,
\Hom_K\big[\sum (-1)^i\twedge^i \fk s,\Hom(X,Y)\big]\big]=\\ 
&\Tr\big[1\otimes\theta,\Hom_K\big[\sum\twedge^i\fk s,\Hom(X,Y)\big]\big]=\\
&c\Tr\big[1\otimes\theta,\Hom_K\big[S\otimes S^*,\Hom(X,Y)\big]\big]=\\
&c\Hom_{K^\dagger}\big[ (X^+-X^-)\otimes S,(Y^+ -Y^-)\otimes S\big] =\\
& c  \Hom_{K^\dagger}\big[ I_\theta(X),I_\theta(Y)\big].
  \end{aligned}
\end{equation}
{
For this calculation recall that $\Hom$ for virtual $K-$modules is defined by
\[
\Hom_K(\sum\lambda_iV_i,\sum\mu_jV_j)=\sum\lambda_i\mu_j\Hom_K(V_i,V_j),
\]
and likewise for $K^\dagger$.)
}

The last equality in \eqref{eq:twistedext} uses the formula (\ref{twisted index formula}), which
gives an equality of virtual $K^\dagger-$modules, without the $\theta-$action.
The constant $c$ is as follows. If $\dim \fk s$ is even, $c=1$; in this
case $\twedge\fk s = S\otimes S^*$, where $S$ is the unique 
spin module for $C(\frs)$. If $\dim\fk s$ is odd, $c=2$. There are two spin
modules $S_1$ and $S_2$, and $\twedge\fk s = S_1\otimes S_1^*\oplus
S_2\otimes S_2^*$. Since $S_1$ and $S_2$ are isomorphic as
$K^\dagger-$modules, we denote either one of them by $S$ and write
$\twedge\fk s = 2\, S\otimes S^*$.

\begin{remark}
\label{rmk twist}
{\rm If we do not bring the $\theta-$action into play, and consider
  just the virtual vector space $\EP(X,Y)$, then in the case when $\frg$
and $\frk$ do not have equal rank, $\EP(X,Y)=0$ for all
$X$ and $Y$; it is enough to check this for $X$ a standard module,
and for that case, one can use Lemma \ref{htext}. 
With the $\theta-$action taken into account, this does not follow. Namely, 
the conclusion of Lemma \ref{htext} is no longer valid, since  
$\sum (-1)^i\twedge^i \fk a$ is nonzero as a virtual $\{1,\theta\}-$module.
}
\end{remark}

As before, the computation  (\ref{eq:twistedext}) does not make sense for infinite-dimensional $X$ and $Y$, but the end result still holds.
This is the content of Theorem \ref{thm_altsum_theta}, which is an analogue of Theorem \ref{thm_altsum} in the twisted setting, and it has a similar proof.

{
As in the untwisted case, we can connect the two pairings, $\EP_\theta(X,Y)$ and $\Hom_{K^\dagger}\big[ I_\theta(X),I_\theta(Y)\big]$, to a third kind of pairing, called the twisted elliptic pairing of $X$ and $Y$. We discuss this at the end of Section \ref{s:ext}.
}

Finally, we stress that Theorem \ref{thm_altsum} and Theorem \ref{thm_altsum_theta} both give equalities of virtual vector spaces, and not of virtual
$\{1,\theta\}-$modules. Namely, in all Euler characteristic arguments the crucial point is to get cancellations, and 
{
these are only possible in the setting of virtual vector spaces, or virtual $K^\dagger-$modules, 
and not in the setting of virtual $\{1,\theta\}-$modules. Here is a typical example: let $V$ be a finite-dimensional vector space, and let $V^\pm$ denote the $\{1,\theta\}-$module equal to $V$ as a vector space, with $\theta$ acting by $\pm 1$. Then $V^+-V^-$ is zero as a virtual vector space, but it is nonzero as a virtual $\{1,\theta\}-$module. This is analogous to Remark \ref{rmk twist}.
}

A strong vanishing result for two discrete series $X$ and $Y$ follows from  Schmid's formula for the $\bar\fru-$cohomology:
\begin{equation}
  \label{eq:schmid}
\bigoplus_i \Ext^i_{(\frg,K)}(X,Y) = \Hom_{K^\dagger}(I(X),I(Y))=\begin{cases}
  0 &\text{ if } X\neq Y ;\\
\bbC &\text{ if } X=Y,
\end{cases}
\end{equation}
The Dirac index at the end of \ref{eq:schmid}) is always a single $K^\dagger-$type, and different $X$ and $Y$ have different indices. Conceivably the vanishing of $\Ext$ could be established using a calculation similar to (\ref{eq:twistedext}). We do not know of such an argument.

\section{$K-$characters as distributions}
\label{sec:characters}
The results in this section go back to one of Harish-Chandra's early
papers \cite{HC}. We give some details since we consider groups which
are not in the Harish-Chandra class. 
\subsection{General manifolds} Let $(M,\varpi_M)$ and
$(N,\varpi_N)$ be manifolds with orientation forms. Assume
$\Psi:M\longrightarrow N$ is a submersion. 
\begin{lemma}
For every $n\in N,$ $\Psi^{-1}(n)$ is a submanifold. There is a form
$\eta_n$ on $\Psi^{-1}(n)$ such that
\[
\varpi_M(m)=(\Psi^*\varpi_N)(m)\wedge\eta_{\Psi(m)}(m).
\]
\end{lemma}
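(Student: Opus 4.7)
The first assertion is standard: since $\Psi$ is a submersion, $d\Psi_m$ is surjective at every $m$, and the submersion (implicit function) theorem gives local coordinates on $M$ adapted to $\Psi$, showing that $\Psi^{-1}(n)$ is a closed submanifold of codimension $\dim N$. I would invoke this directly.

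For the existence of $\eta_n$, my plan is to work fiberwise at the level of tangent spaces and then check that the resulting construction is independent of auxiliary choices. Fix $m\in M$, set $n=\Psi(m)$, and write $V_m=\ker d\Psi_m=T_m\Psi^{-1}(n)$. Choose any linear complement $H_m\subset T_mM$, so that $d\Psi_m\colon H_m\to T_nN$ is an isomorphism. Given $v_1,\dots,v_k\in V_m$ (with $k=\dim M-\dim N$) and any basis $h_1,\dots,h_\ell$ of $H_m$, define
\[
\eta_n(v_1,\dots,v_k)=\frac{\varpi_M(h_1,\dots,h_\ell,v_1,\dots,v_k)}{\varpi_N(d\Psi h_1,\dots,d\Psi h_\ell)}.
\]
The denominator is nonzero since $\varpi_N$ is an orientation form and the $d\Psi h_i$ form a basis of $T_nN$. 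Independence of the basis of $H_m$ follows because changing basis multiplies numerator and denominator by the same determinant. Independence of the choice of complement $H_m$ is the real content: any other complement $H_m'$ is obtained by adding elements of $V_m$ to each $h_i$, and in the numerator these extra terms, when wedged together with $v_1,\dots,v_k$, produce expressions in which $k+1$ vectors lie in the $k$-dimensional space $V_m$ and hence vanish; the denominator is unchanged because the extras lie in $\ker d\Psi_m$.

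To upgrade this pointwise construction to a smooth form on $\Psi^{-1}(n)$, I would use adapted local coordinates $(x_1,\dots,x_k,y_1,\dots,y_\ell)$ in which $\Psi(x,y)=y$. Writing $\varpi_N=g(y)\,dy_1\wedge\cdots\wedge dy_\ell$ and $\varpi_M=f(x,y)\,dx_1\wedge\cdots\wedge dx_k\wedge dy_1\wedge\cdots\wedge dy_\ell$, the construction of the previous paragraph gives
\[
\eta_n=\bigl(f(x,n)/g(n)\bigr)\,dx_1\wedge\cdots\wedge dx_k
\]
on the fiber slice $\{y=n\}$, which is manifestly a smooth top form there. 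Pulling back to the coordinate patch, the factorization $\varpi_M=(\Psi^*\varpi_N)\wedge\eta_{\Psi(m)}$ at each $m$ is immediate from the coordinate expressions.

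The main obstacle, such as it is, is the well-definedness of $\eta_n$ as an intrinsic object on the fiber, since a priori the formula involves a choice of horizontal complement. The observation in the second paragraph—that replacing $h_i$ by $h_i+w_i$ with $w_i\in V_m$ leaves everything unchanged because of the dimension count—resolves this cleanly, and hence the locally defined forms patch into a globally defined top form $\eta_n$ on $\Psi^{-1}(n)$.
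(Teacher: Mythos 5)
The paper states this lemma without proof, treating it as a standard fact about integration along the fibers of a submersion, so there is no argument in the paper to compare yours against. Your proof is correct and supplies the standard details: the first assertion is the regular value theorem, and for the second you construct $\eta_n$ pointwise as a ratio of top forms, with the crucial well-definedness following from the dimension count you give (any two complements to $V_m=\ker d\Psi_m$ differ by elements of $V_m$, and inserting more than $k$ vectors from the $k$-dimensional space $V_m$ into a form kills it). One small remark on the coordinate check: with the ordering $\varpi_M=f\,dx_1\wedge\cdots\wedge dx_k\wedge dy_1\wedge\cdots\wedge dy_\ell$, the identity $\varpi_M=(\Psi^*\varpi_N)\wedge\eta_n$ forces $\eta_n=(-1)^{k\ell}\bigl(f(x,n)/g(n)\bigr)\,dx_1\wedge\cdots\wedge dx_k$; the sign disappears if you instead write $\varpi_M=f\,dy_1\wedge\cdots\wedge dy_\ell\wedge dx_1\wedge\cdots\wedge dx_k$, consistently with your pointwise formula, which already places the horizontal slots before the vertical ones. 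You might also remark explicitly that since $\varpi_M$ and $\varpi_N$ are nowhere vanishing, the resulting $\eta_n$ is itself a nowhere-vanishing top form on the fiber, which is what the corollary following the lemma actually uses.
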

\begin{corollary}
There is an onto map $\wti\Psi:C_c^\infty(M)\longrightarrow
C_c^\infty(N)$, denoted\newline $\wti\Psi(\phi):=F_\phi$,  given by the
formula
\[
F_\phi(n)=\int_{\Psi^{-1}(n)}\phi(x)\eta_n(x).
\]
\end{corollary}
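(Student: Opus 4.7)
My plan is to verify two claims in sequence: first, that the formula $F_\phi(n) = \int_{\Psi^{-1}(n)} \phi(x)\eta_n(x)$ really defines a smooth, compactly supported function on $N$ whenever $\phi \in C_c^\infty(M)$; and second, that every $F \in C_c^\infty(N)$ arises as some $F_\phi$.

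For the first step I would pass to local coordinates. Since $\Psi$ is a submersion, around each $x \in M$ there exist charts on $M$ and on $N$ in which $\Psi$ becomes the projection $(u,v)\mapsto u$ and $\varpi_N = du$. The factorization of the preceding lemma then forces $\eta_n$ to be a smooth, nowhere vanishing multiple of $dv$ on each fiber, so that $F_\phi$ takes the local form $u\mapsto \int \phi(u,v)\, h(u,v)\, dv$ with $h$ smooth and $\phi(u,\cdot)$ compactly supported. Smoothness of $F_\phi$ then follows by routine differentiation under the integral sign, and compact support is immediate from $\supp(F_\phi)\subseteq \Psi(\supp\phi)$, which is compact by continuity of $\Psi$.

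For surjectivity I would use the familiar ``divide by a positive test function'' trick. Given $F \in C_c^\infty(N)$, my goal is to produce a nonnegative $\chi \in C_c^\infty(M)$ with $\wti\Psi(\chi) > 0$ on $\supp F$. To build $\chi$, for each $n_0 \in \supp F$ I pick $x_0 \in \Psi^{-1}(n_0)$ and a nonnegative bump $\chi_{x_0}\in C_c^\infty(M)$ with $\chi_{x_0}(x_0)>0$; the local model from step one shows $\wti\Psi(\chi_{x_0})(n_0)>0$, and by the continuity established in step one this positivity persists in a neighborhood $U_{n_0}$ of $n_0$. Choosing a finite subcover of the compact set $\supp F$ and summing the corresponding bumps gives $\chi$. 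Then $g := F/\wti\Psi(\chi)$ is smooth on the open set $\{\wti\Psi(\chi)>0\}$, which contains $\supp F$, and extends by zero to an element of $C_c^\infty(N)$. Setting $\phi(x):=\chi(x)\, g(\Psi(x))$ yields $\phi\in C_c^\infty(M)$, and pulling the factor $g(\Psi(x))=g(n)$ out of the fiber integral gives $\wti\Psi(\phi)(n)=g(n)\wti\Psi(\chi)(n)=F(n)$ on $\supp F$, with both sides vanishing elsewhere.

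The main obstacle I anticipate is the construction of $\chi$: I must guarantee that $\wti\Psi(\chi)$ is \emph{strictly} positive, not merely nonzero, on the whole compact set $\supp F$, since both smoothness of $g=F/\wti\Psi(\chi)$ and its zero-extension depend on this. This is where nonnegativity of the bumps, the submersion hypothesis (which ensures a genuinely positive fiber integral from a single bump), and a compactness argument all enter crucially; the rest of the proof is either local-coordinate bookkeeping or a direct computation.
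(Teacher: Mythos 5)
The paper states this corollary without proof, treating it as an immediate consequence of the preceding lemma, so there is no ``paper proof'' to compare against directly. Your argument is correct and is the standard one: the local submersion form $(u,v)\mapsto u$ reduces both smoothness of $F_\phi$ and well-definedness of the fiberwise integral to elementary calculus, and the ``divide by a positive pushforward'' trick---building $\chi\geq 0$ with $\wti\Psi(\chi)>0$ on $\supp F$, setting $g=F/\wti\Psi(\chi)$ extended by zero, and taking $\phi=\chi\cdot(g\circ\Psi)$---is exactly how surjectivity is usually established. You correctly identified the genuine pressure point (strict positivity of $\wti\Psi(\chi)$ on all of $\supp F$, needed both for smoothness of $g$ and for the zero-extension to glue), and your compactness argument handles it. The only cosmetic remark: when you note $\supp F_\phi\subseteq\Psi(\supp\phi)$ you are implicitly using that $\Psi(\supp\phi)$ is compact, hence closed, so it really does contain the closure of the set where $F_\phi\neq 0$; this is fine but worth stating. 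Overall this is a complete and correct filling-in of a proof the authors omitted.
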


\begin{definition}
  If $\Theta$ is a distribution on $N$ let 
\[
\Psi^*(\Theta)(\phi)=\Theta(F_\phi).
\]
\end{definition}
\subsection{Special Case} Let $G^+$ be a real reductive group with
maximal compact subgroup $K^+$, the fixed points of a Cartan involution
$\theta\in G^+.$ We do not assume the group is connected, for example
$G^+$ could be $G\rtimes\{1,\theta\}$ as described earlier. So $\theta$ may
not be in the connected component of the identity. Let $G^+_{\reg}$ be the
regular set, $(K^+)':=G^+_{\reg}\cap K^+.$ Write $S:=G^+/K^+$, so that $G/K\cong S=\exp\fk s$ where $\fk g=\fk k \oplus\fk s$ is the
Cartan decomposition. 

We choose $M:=(S\times (K^+)',dsdk)$, and 
$N:=((G^+)_{\reg}^{ell},dg)$. Let $\Psi(s,k):=sks^{-1}.$ Then
$(G^+)^{ell}_{\reg},$ the set of regular elliptic elements, is the
image of $\Psi$.
 The results in the previous section imply  
$\eta(s,k)=\Delta(s)ds$ where $\Delta(s)$ is the Jacobian.

Let $\Theta:=\Theta_\pi$ be the distribution character of an
admissible $(\fk g,K^+)-$module. Bouaziz \cite{Bz} has extended
Harish-Chandra's results on characters to  a larger class of groups so
that there exists a function $\Theta_\pi$ analytic on $G^+_{\reg}$
so that  
\[
\Theta(f)=\tr\pi(f)=\int_G\Theta_\pi(x)f(x)\; dx.
\]
Let $\phi=g(s)f(k)$ for a $g\in C_c^\infty(S)$ and $f\in
C_c^\infty((K^+)').$ Then 
\begin{equation}
  \label{eq:lift}
  \Theta(F_\phi)=\int_S g(s)\Delta(s)\;ds\int_{K^+}\Theta_\pi (k)f(k)\;dk.
\end{equation}
Now let $g$ depend on a parameter $t\in \bR$ such that $\supp g_t\to
\{1\}$ as $t\to 0$ and $\int_S g_t(s)\Delta(s)\;ds=1.$  We conclude
\begin{equation}
  \label{eq:limit}
  \lim_{t\to 0}\Theta(F_{\phi_t})=\int_K\Theta_\pi(k)f(k)\;dk.
\end{equation}
\begin{proposition}
 If $f\in C_c^\infty(K'),$ then $\pi(f):=\int_K f(k)\pi(k)\;dk$ is
  trace class.
\end{proposition}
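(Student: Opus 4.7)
The plan is to use admissibility to decompose $\pi(f)$ into finite-rank blocks indexed by $\wht K$-types, then apply a Peter--Weyl style decay estimate for Fourier coefficients on the compact group $\wht K$. By admissibility,
\[
X=\bigoplus_{\mu\in\widehat{\wht K}} X(\mu),
\]
each $\wht K$-isotypic component $X(\mu)$ being finite-dimensional with $\dim X(\mu)=m_\mu \dim V_\mu\leq C(1+\|\mu\|)^d$, where $V_\mu$ is an irreducible $\wht K$-representation of class $\mu$ and $m_\mu$ is its multiplicity. Since $\supp f \subset \wht K$, for every $x\in X(\mu)$ the vector $\pi(f)x=\int_{\wht K} f(k)\pi(k)x\,dk$ lies in the finite-dimensional $K$-invariant subspace $X(\mu)$ and is well defined; hence $\pi(f)=\bigoplus_\mu \pi_\mu(f)$ with each $\pi_\mu(f)$ of finite rank, and
\[
\tr\pi_\mu(f)=m_\mu\wh f(\mu),\qquad \wh f(\mu):=\int_{\wht K} f(k)\chi_\mu(k^{-1})\,dk.
\]

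Next I would show that $\wh f(\mu)$ decays faster than any polynomial in $\|\mu\|$. Let $\Omega$ denote the Casimir of $\wht K$, acting on $V_\mu$ by the scalar $\lambda_\mu\gtrsim \|\mu\|^2$. Integration by parts together with bi-invariance of $\Omega$ yields
\[
(1+\lambda_\mu)^N \wh f(\mu) = \wh{(1+\Omega)^N f}(\mu),
\]
and the Cauchy--Schwarz inequality in $L^2(\wht K)$, combined with $\|\chi_\mu\|_{L^2}=1$, gives $|\wh f(\mu)|\leq C_N(1+\lambda_\mu)^{-N}$ for every $N\geq 0$. Combined with the polynomial bound on $\dim X(\mu)$, choosing $N$ large enough forces $\sum_\mu |\tr\pi_\mu(f)|<\infty$. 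This gives the trace-class statement in the algebraic sense, with $\tr\pi(f):=\sum_\mu \tr\pi_\mu(f)$; the same reasoning applied to the identity $\pi_\mu(f)=(1+\lambda_\mu)^{-N}\pi_\mu((1+\Omega)^N f)$, together with a $\wht K$-invariant Hermitian form on each $X(\mu)$ (available because $\wht K$ is compact), upgrades this to $\sum_\mu \|\pi_\mu(f)\|_1<\infty$, which is trace class on any natural smooth completion.

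The main obstacle is the possible disconnectedness of $\wht K$. When $\wht K=K\times\{1,\theta\}$, irreducibles of $\wht K$ arise from those of $K$ via Clifford theory; the Casimir of $\wht K$ decomposes accordingly and the estimate $\lambda_\mu\gtrsim\|\mu\|^2$ survives componentwise, so no essential new difficulty arises. A secondary point is specifying what ``trace class'' means on the algebraic $(\fk g,\wht K)$-module $X$: the block decomposition above simultaneously yields absolute convergence of the block traces (enough to make the identity $\Theta(f)=\tr\pi(f)$ of (\ref{eq:limit}) meaningful) and trace-class convergence on a smooth globalization, so either interpretation suffices for this subsection.
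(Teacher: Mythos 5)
Your argument is correct and is essentially the standard one from Knapp's book, which is exactly what the paper invokes (it says only ``the proof is the same as in [Kn]''): decompose $\pi(f)$ into finite-rank blocks over $\wht K$-types via admissibility, observe $\pi(f)|_{X(\mu)}=\sigma_\mu(f)\otimes\mathrm{id}_{m_\mu}$ since $X(\mu)$ is $\wht K$-invariant and $\pi(k)$ acts there by $\sigma_\mu(k)\otimes\mathrm{id}$, and then get rapid decay of the block norms by repeatedly inserting $(1+\Omega)^N$ and integrating by parts. Your added remarks about disconnectedness of $\wht K$ and about what ``trace class'' should mean on the algebraic $(\frg,\wht K)$-module are reasonable fillings-in of details that the paper, by deferring to the reference, leaves implicit; they do not constitute a departure from the paper's approach.
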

\begin{proof}
The proof is the same as in \cite{Kn}.
\end{proof}
\begin{theorem}
\label{T:kchar}
  The distribution $\tr\pi(f)$ for $f\in C_c((K^+)')$ equals
  $\int_{K^+}\theta_\pi(f)f(k)\;dk.$

\end{theorem}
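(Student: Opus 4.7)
The plan is to combine the lift formula (\ref{eq:lift}) with the character identity $\Theta(F)=\tr\pi(F)$ and with the trace-class Proposition just proved. Given $f\in C_c^\infty(\wht K')$, I would choose a family $g_t\in C_c^\infty(S)$ with $\mathrm{supp}(g_t)\to\{1\}$ as $t\to 0$ and $\int_S g_t(s)\Delta(s)\,ds=1$, set $\phi_t(s,k)=g_t(s)f(k)$, and form $F_{\phi_t}=\wti\Psi(\phi_t)\in C_c^\infty(\wht G^{ell}_{reg})$. Formula (\ref{eq:lift}) together with the normalization then gives, for every $t$,
\[
\tr\pi(F_{\phi_t})\;=\;\Theta(F_{\phi_t})\;=\;\int_{\wht K}\Theta_\pi(k)f(k)\,dk,
\]
which produces the desired right-hand side.

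To match the left-hand side, I would unfold $\pi(F_{\phi_t})$ by the same pushforward mechanism that produced (\ref{eq:lift}), but with the operator-valued integrand $\pi(g)$ in place of the scalar $\Theta_\pi(g)$:
\[
\pi(F_{\phi_t})\;=\;\int_{S\times\wht K'}\pi(sks^{-1})g_t(s)f(k)\Delta(s)\,ds\,dk\;=\;\int_S g_t(s)\Delta(s)\,\pi(s)\pi(f)\pi(s)^{-1}\,ds.
\]
Since $\pi(f)$ is trace class by the Proposition and conjugation preserves trace, $\tr(\pi(s)\pi(f)\pi(s)^{-1})=\tr\pi(f)$. Interchanging trace with the integral and invoking the normalization of $g_t$ yields $\tr\pi(F_{\phi_t})=\tr\pi(f)$; comparing with the preceding identity gives the theorem. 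One may equivalently pass to the limit $t\to 0$ as in (\ref{eq:limit}), but since neither side depends on $t$ this is optional.

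The main technical point is the cyclic invariance of trace used above, because on the algebraic Harish-Chandra module $\pi(s)$ for $s\in S$ is not \emph{a priori} a bounded operator. I would address this by passing to a suitable continuous globalization (Casselman-Wallach smooth, or a Hilbert completion associated with a Hermitian form) of $X$, on which $\pi$ extends to a continuous representation of $\wht G$, the operators $\pi(s)$ are bounded with bounded inverses, and $\pi(f)$ remains trace class with the same trace. Standard operator theory then supplies the identity $\tr(\pi(s)\pi(f)\pi(s)^{-1})=\tr\pi(f)$ and a Fubini argument for trace-class-valued integrands justifies moving the trace past the integral in $s$.
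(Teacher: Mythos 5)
Your proposal is correct and follows essentially the same route as the paper: unfold $\pi(F_{\phi_t})$ into $\int_S g_t(s)\Delta(s)\,\pi(s)^{\mp 1}\pi(f)\pi(s)^{\pm 1}\,ds$, use the trace-class Proposition and conjugation-invariance of the trace, and compare with (\ref{eq:lift}). The only (harmless) deviations are your observation that the normalization of $g_t$ makes the limit $t\to 0$ unnecessary, and your added care about boundedness of $\pi(s)$ via a globalization and the Fubini step for trace-class integrands, points the paper leaves implicit.
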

\begin{proof}
\[
\pi(F_{\phi_t})=\int_S\int_{K^+} g_t(s)\Delta(s)f(k)\pi(s^{-1}ks)\;ds\;dk=
\int_Sg_t(s)\Delta(s)\pi(s^{-1})\pi(f)\pi(s)\;ds.
\]
The operators $\pi(s^{-1})\pi(f)\pi(s)$ are all trace class, and
$\tr[\pi(s^{-1})\pi(f)\pi(s)]=\tr\pi(f).$  Since
$\supp g_t\to \{1\},$ we conclude
\begin{equation}
\label{eq:char}
\lim_{t\to 0}\int_S g_t(s)\Delta(s)\tr[\pi(s^{-1})\pi(f)\pi(s)]=\tr\pi(f).  
\end{equation}
The  formula follows by comparing (\ref{eq:limit}) with (\ref{eq:char}). 
\end{proof}
In the equal rank situation with $\gamma=1\otimes\theta,$ (\ref{index formula})
implies  
\begin{equation}\label{eq:character}
\Theta(X)\mid_{T_{\reg}}=\frac{\ch(I(X))}{\ch(S^+-S^-)}\bigg|_{T^\dagger_{\reg}}.
\end{equation}
{
Here $\ch(V)$ denotes the usual character of a virtual finite-dimensional $K^\dagger-$module $V$.
}
Since $\ch(S^+-S^-)\mid_{T^\dagger}$ is the noncompact part of the Weyl denominator
for $\frg$, it does not vanish identically. 

In the general case with $\gamma=\theta\otimes 1,$ we use the group
$G^+=G\rtimes\{1,\theta\}$. 
Let $T\subset K$ be a Cartan subgroup of $K.$ 
Then $\theta T$ contains elements in $(K^+)';$ in
fact the elements in $(K^+)'$ in the same connected component as $\theta$ are
all conjugate to $\theta T_{\reg}.$ Then 
{
\begin{equation}
  \label{eq:twcharacter}
 \Theta(X)(\theta\, p(t))
   =\frac{\ch(I_\theta(X))(t)}{\ch(S)(t)},\qquad t\in T^\dagger_{\reg},
\end{equation}
where $p:T^\dagger\to T$ is the covering map.
As before, $\ch(V)$ denotes the usual character of a virtual
finite-dimensional $K^\dagger-$module $V$. 
}
Note that in this case $\theta$ acts by the identity on $S,$ so
the formula is analogous to (\ref{eq:character}).

\section{Twisted Conjugacy Classes}
\label{tw conj cl}
{
We review some well known facts about 
twisted strongly regular semisimple conjugacy classes and twisted Cartan subgroups; see in particular \cite{W1}. 
}
\subsection{}
Recall  $G=\bb G(\bR)$ the rational points of a linear algebraic 
reductive connected group, 
$\theta$ the Cartan involution, $G^+=G\rtimes \{1,\theta\}$ and
$\wti G=G\theta$ as before. $G(\bb R)$ is the fixed points of the
conjugation $\sigma.$

Given an element $x\in G,$ its \textbf{twisted conjugacy class} is the
set 
$$
\{gx\theta g^{-1}\,\big|\,g\in G\}\subset \wti G
$$ 
or equivalently $\{gx\theta(g^{-1})\,\big|\,g\in G\}\subset G$.  

\begin{definition}
\label{def: str reg}
An element $g\in G^+$ is called \textbf{strongly regular} if
\newline $C_\fk g(\Ad g):=\{ X\in \fk g : \Ad g (X) =X\}$ has minimal dimension.
\end{definition}
 
\subsection{Complex Groups}
We first consider the case of  twisted conjugacy classes of strongly regular
semisimple elements for a complex group. Write $\bb G$ for $G(\bb \bC)$. 

\begin{proposition}\label{p:cxtwisted}
For any semisimple strongly regular element $\tilde x=\theta x\in \wti{\bb G}$ there
is a pair $\bb (B,\bb H=\bb T\bb A)$ stabilized by $\tilde x$ and such that $\bb T$ is the
centralizer of $\tilde x$ in $\bb G,$ and $\bb H$ is the centralizer of  $\bb T$ in
$\bb G$. 
\end{proposition}
\begin{proof}
Theorem 7.5 in \cite{St} states that any semisimple automorphism of an
algebraic group fixes a pair $(\bb B,\bb H)$ where $\bb B$ is a Borel subgroup and
$\bb H\subset \bb B$ a Cartan subgroup. Thus there is a pair
$(\bb B,\bb H)$ fixed by $\theta$.  Since any two
pairs $(\bb B,\bb H)$ and $(\bb B',\bb H')$ are conjugate by an element in $\bb G$, there
is $g\in \bb G$ such that  
$\Ad \tilde x(\bb B,\bb H)=\Ad g(\bb B,\bb H)$. By Lemma 7.3 in  \cite{St}, there is $y\in\bb G$ such
that $g^{-1}=y(\theta x)y^{-1}(\theta x)^{-1}.$ Thus there is a
$\bb G-$conjugate of $\theta x$ which preserves $(\bb B,\bb H).$ Write this
conjugate of $\theta x$ as $\theta h$ with $h\in\bb  H$; this is possible because
the (twisted) conjugate of $x$ must stabilize $(\bb B,\bb H)$ so must be in
$\bb H$. 
Write $\bb H=\bb T\bb A$ where $\theta$ acts by $1$
on $\bb T$ and $-1$ on $\bb A$. Sine $\bb G$ is complex,  any element $a\in \bb A$ can be
decomposed as $a=\theta(b)b^{-1}$, so $\theta h$ is twisted conjugate
to an element  $\theta t$ with $t\in\bb  T$.  The fact that $\bb T$ is
  the centralizer of $\tilde x$ follows from the assumption of strong regularity.
\end{proof}
\subsection{Real Groups} We now consider the case of a real group.
Recall $\bb G=G(\bC)$ the complex group, and 
$G=G(\bR)$  the fixed points of the conjugation  $\sig$.
Now assume  that $\tilde x=\theta x$ is strongly regular and fixed by $\sigma,$ \ie
$\tilde x\in \theta G(\bb R).$ By Proposition \ref{p:cxtwisted} there is a
pair $(\bb B,\bb H)$ which is $\tilde x-$stable. Write $\bb H=\bb T\bb A$
where $\bb T=C_\bb G(\tilde x)$ and $\bb H=C_\bb G(\bb T).$ Since $\sig$
stabilizes $\tilde x$, it stabilizes $\bb T$, 
and therefore also $\bb H$. Since $\bb H$ is $\sig-$stable, we can
conjugate $\tilde x$  by $G(\bR)$ so that $\bb H$ is also
$\theta-$stable. On the other hand, there is  $(\bb B_0,\bb H_0)$ so that $\bb H_0$ is
$\theta-$stable and $\sig-$stable;  $\bb H_0$ is a fundamental Cartan
subgroup with Cartan decomposition $\bb H_0=\bb T_0\bb A_0.$ In
particular $\theta \bb T_0$ has strongly regular elements, and $C_\bb
G(\bb T_0)=\bb H_0.$  By (the proof of) 
Proposition \ref{p:cxtwisted}, there is $g\in \bb G$ such that
$g\tilde xg^{-1}=\theta t$ with $t\in \bb T_0.$ Then 
$$
g\bb Tg^{-1}=C_\bb G(g\tilde xg^{-1})=C_\bb G(\theta t)\supset \bb T_0.
$$  
Since $\tilde x$ was assumed strongly regular, the dimension of the
centralizer is minimal, so in fact  $g\bb Tg^{-1}=\bb T_0.$   
Since $\bb T$ and $\bb T_0$ are both $\sig-$stable, it follows that $g\sig(g^{-1})$ normalizes $\bb T_0,$ and therefore also $\bb H_0.$ Thus it is an element of $W(\bb G,\bb H_0)$ which stabilizes $\bb T_0$. Conversely, given a $\bb T=g^{-1}\bb T_0g$ which is $\sig-$stable, the associated set of strongly regular elements is in $\theta x\bb T$ with $x=\theta(g^{-1})g.$
We conclude that  in order to classifiy strongly regular elements in
$G(\bR)\theta,$ we need to classify the $\sig-$stable tori $g\bb
T_0g^{-1}$ up to conjugacy by $G(\bb R).$ This is a Galois cohomology problem. 
If $\bb T_1$ and $\bb T_2$ are conjugate under $G(\bbR),$ then their
centralizers $C_G(\bb T_1)$ and $C_G(\bb T_2)$ are conjugate by $G(\bb R)$ as well. 
We will separate  the $g\bb Tg^{-1}$  into classes   that  have the same $\sig$ and $\theta-$stable centralizer $\bb H$.  

\begin{definition}\label{d:twistedcsg}
 Representatives of $G(\bb R)-$conjugacy classes of $\sig-$stable $\bb
 T\theta$ are called twisted   Cartan subgroups.
\end{definition}
For the real points $G\theta=G(\bR)\theta,$ the twisted Cartan
subgroups will be the fixed points under $\sig,$  denoted  $T\theta$ with $\theta$ on the right instead of on the left. 

\medskip
Recall $G:=G(\mathbb R)$, the real points of a reductive linear
algebraic group, the fixed points of a conjugation $\sigma.$  Let
$\mathfrak g$ be its Lie algebra, $\theta$ the Cartan involution,  
and $\mathfrak g=\mathfrak k \oplus\mathfrak s$ be the Cartan
decomposition. Recall $\bb H_0=\bb T_0\bb A_0$  the fundamental
$\theta-$stable Cartan subgroup with 
Cartan subalgebra $\mathfrak h_0=\mathfrak t_0 \oplus\mathfrak a_0.$  When
$G(\bbR)$ is equal rank, $\bb T_0=\bb H_0$ and twisted conjugacy classes
are just regular conjugacy classes of Cartan subgroups. So we will
concentrate  on unequal rank groups. 

We are looking for $G-$conjugacy classes of $z\bb T_0 z^{-1}$ which are
$\sig-$stable. Every $z\bb T_0 z^{-1}$ which is $\sig-$stable must have
a centralizer which is a Cartan subgroup which must  also be
$\sig-$stable.  We sort the 
$z\bb T z^{-1}$ by the  $\sig-$stable and $\theta-$stable Cartan
subgroups which are representatives of $G-$conjugacy classes of real Cartan subgroups.  
They are of the form $c\bb Hc^{-1}$ with $c$ a Cayley transform. 
A Cayley transform is a product of Cayley transforms $c_\al$ attached to noncompact
imaginary roots $\al.$ If $X_{\pm\al}$ are root vectors such that
$\theta(X_{\pm\al})=-X_{\pm\al},$ and $\sig(X_{\pm\al})=X_{\mp\al},$
then the Cayley transform is $c_\al=e^{\pi(X_\al +X_{-\al})/4}.$  Then $c_\al^2$ represents the
Weyl involution $w_\al.$ We conclude that 
$\bb T_c:=c\bb Tc^{-1}\subset \bb H_c:= c\bb H_0c^{-1}$ is $\sig$ and 
$\theta-$stable. $\bb T_c$ can therefore play the role of $\bb T_0,$ and so any $\bb T'$ with centralizer $\bb H_c$ is then conjugate to $\bb T_c$ by an   element representing a Weyl group
element in $W_c:=W(\bb G,\bb H_c).$

\medskip
We give two examples of conjugacy classes of twisted Cartan subgroups in the unequal rank case.

\subsection{Complex Groups as Real Groups}\label{s:cx}

We specialize to the case  $G^+=G\rtimes \{1,\theta\}$ with $G$ a complex group viewed as a real group,
  and consider the problem of classifying twisted regular conjugacy classes.
We start with the case of a complex group $G$ viewed as a real group.
Its Lie algebra is denoted $\fk g.$ Let $\theta$ be a Cartan
involution, with decomposition $G=KS$ and $\fk g=\fk k \oplus\fk s.$ Let
$\ovl{\phantom{a} }$ be the complex conjugation corresponding to $K;$ 
$\ovl{(ks)}=ks^{-1}=\theta(ks)$, so $\ovl{\phantom{a} }$ equals
$\theta$ in this case.   
Let $B=HN$ be a Borel subgroup with $\theta-$stable Cartan subgroup $H=TA.$ Let
$\fk h=\fk t \oplus\fk a$ be the Cartan decomposition. Then $\theta (N)=\ovl
N$ the opposite unipotent radical. 

\medskip
We complexify $G$. Recall that $\ovl{\phantom{a}}$ is conjugation in $G$
with respect to the compact form $K.$ Then
\begin{align*}
  &\bb G\cong G\times G, &&\theta_\bC(g_1,g_2)=(g_2,g_1) &&G\cong\{
  (g,\ovl{g})\ :\ g\in G\},\\
  &\bb H\cong H\times H &&\bb T\cong\{(h,h)\ :\ h\in H\} 
&&\bb A\cong\{(h,h^{-1})\ :\ h\in H\},
 \end{align*}
so in particular the conjugation giving $G$ is 
$\sig(g_1,g_2)=(\ovl{g_2},\ovl{g_1}).$
So we are looking for pairs $(a,b)\in \bb G$ such that
$(a,b)\bb T(a,b)^{-1}$ is $\sig-$stable, modulo the action of $G.$ It
follows that for any $t_1=(h_1,h_1)\in \bb T$ there must be
$t_2=(h_2,h_2)\in \bb T$ such that
\begin{equation}
  \label{eq:stable}
  \sig\big( (a,b)t_1 (a,b)^{-1}\big)=(a,b)t_2(a,b)^{-1}.
\end{equation}
We conclude that $\Ad (\ovl{b}^{-1}a), \Ad(b^{-1}\ovl{a})\in N_G(H).$
In other words if we define $x:=\ovl{b}^{-1}a,$ then $x,\ovl{x}\in N_G(H).$ 

Multiplying any $(a,b)$ by $(\ovl{c},c)$ does not change anything;
$\Ad(a,b)\bb T$ is replaced by a $G-$conjugate. So we can take $b=1,$
and then $a$ will satisfy $a,\ovl{a}\in N_G(H).$ 

Write $a=ks$. Then both  $ks$ and $ks^{-1}$ must belong to $N_G(H),$
so it follows that $s^2\in N_G(H).$ Since $\Ad s$ is semisimple with
real positive eigenvalues, it follows that $s\in C_G(H)=H,$ and
therefore $k\in N_G(H)$.  So we can use $k$ instead of $a,$ and $k$
must satisfy $k^2\in H.$ Thus the conjugacy classes have representatives
\begin{equation}
  \label{eq:creps}
  \{(whw^{-1},h)\ :\ w\in N_K(H)/Z_K(H):=W, \text{ and } w^2\in H\}.
\end{equation}
Suppose that two of these, corresponding to $w_1,w_2\in W$  give
$G-$conjugate sets. Then there is $g\in G$ such that for any $h_1\in
H$ there is $h_2\in H$ such that
\begin{align*}
  &gw_1h_1w_1^{-1}g^{-1}=w_2h_2w_2^{-1}, && \ovl{g}h_1\ovl{g}^{-1}=h_2.
\end{align*}
It follows that $\ovl{g}\in N_G(H)$ and therefore also ${g}\in
N_G(H).$  As earlier, if $g=ks$ and $\sig (g)=ks^{-1}$ stabilize an
object, then so does $g^{-1}\sig(g)=s^{-2}$. Since $s$ is semisimple
and has real positive eigenvalues only, $s$ stabilizes the object as
well. Thus the sets 
$$
\{(sw_1hw_{1}^{-1}s^{-1},h)\}=\{(sw_1s^{-1}hsw_1^{-1}s^{-1},shs^{-1}=h)\}
$$ 
and $\{(w_1hw_{1}^{-1},h\}$ give the same $G-$conjugacy class. We
conclude that $k\in N_G(H)$ and $s\in H.$ So we may as well assume
$g=k\in N_K(H).$  But then  
\begin{equation}
  \label{eq:conclusion}
  \Ad(gw_1g^{-1})h=\Ad(w_2)h,
\end{equation}
for all $h\in H,$ so that $w_1$ and $w_2$ are conjugate as elements of
the Weyl group $W.$
We have proved the following result.
\begin{theorem}
\label{tw conj cx}
The conjugacy classes of twisted Cartan subgroups are in
one to one correspondence with conjugacy classes of involutions in the
Weyl group. More precisely, for any strongly regular semisimple $r\in \wti G,$ there is a unique
conjugacy class of an involution $w$ such that $r$ is conjugate by $G$
to an element of the form $(\theta w) h$ where $h\in H_w$ with 
\begin{equation*}
H_w:=\{ h\in H\ :\ \Ad(\theta w)(h)=h\}.
\end{equation*}
\end{theorem}
The claim follows from the following additional result; 
any element of the form $(\theta w)h$ with $h\in H$ is conjugate by
$H$ to  an element of the form  $(\theta w) h_w$ with $h_w\in H_w.$  
  Let $H^\pm_w:=\{ h\in H\ |\ \theta w(h)=h^{\pm 1}\}.$ 
In the complex case,  any element $h\in H_w^-$ can be written as $h=\theta w(r)r^{-1}$ for some $r\in H$,
 because  the map
$$
r\mapsto\theta w(r) r^{-1}
$$
is onto $H_w^-.$ Therefore any element of the form 
$\theta w th$ with $t\in H_w^+$ and $h\in H_w^-$ is conjugate to $\theta w t.$

\subsection{$GL(2n,\bR)$}\label{s:gln} We treat this case in detail, $GL(2n+1,\bb R)$ is similar.

The fundamental Cartan subalgebra can be realized as
$$
\fk h_0=\{\diag\big(
\begin{pmatrix} t_1&\theta_1\\-\theta_1&t_1\end{pmatrix}\dots 
\begin{pmatrix} t_n&\theta_n\\-\theta_n&t_n\end{pmatrix}\big)\}
$$
with the usual conjugation $\theta(x)=-x^t.$
Its complexification  can be written as 
\begin{align*}
&\fk h=\{ z:=(z_1,\dots ,z_n,z_{n+1},\dots ,z_{2n}),\ z_i\in\mathbb C\}\\
&\theta(z_1,\dots ,z_{2n})=-w_0(z)=-(z_{2n},\dots , z_1),\\
&\sigma(z)=w_0(\ovl{z})=(\ovl{z_{2n}},\dots ,\ovl{z_1})\\
&\fk t=\{(z_1,\dots ,z_n,-z_n,\dots ,-z_1)\}.
\end{align*}
The roots $\ep_i-\ep_{2n+1-i}$ are noncompact imaginary, and they form a maximal set of strongly orthogonal noncompact roots. The other conjugacy classes of Cartan subalgebras are obtained by applying a set of $n-k$  Cayley transforms about a subset of these roots. A  representative is 
 \begin{align*}
&\fk h^{n-k}=\{ z:=(z_1,\dots ,z_{k},z_{k+1},\dots ,z_{2n-k},z_{2n+1-k},\dots ,z_{2n})\},\\
&\theta^{n-k}(z_1,\dots ,z_{2n})=(-z_{2n},\dots ,-z_{2n+1-k},-z_{k+1},\dots,-z_{2n-k},-z_k,\dots ,-z_1),\\
&\sigma^{n-k}(z)=(\ovl{z_{2n}},\dots ,\ovl{z_{2n+1-k}},\ovl{z_{k+1}},\dots,\ovl{z_{2n-k}},\ovl{z_k},\dots ,\ovl{z_1}).
\end{align*}
The $\fk t^{n-k}$ obtained from $\fk t$ by applying the Cayley transform is 
\[
\fk t^{n-k}=\{(z_1,\dots ,z_n,-z_n,\dots ,-z_1)\}.
\]
We consider the case $k=n,$ the fundamental Cartan subalgebra. The stabilizer of $\fk t$ is $W^\theta.$ The real Weyl group is also $W(G,H)=W^\theta.$ This group is formed of changes $z_i\longleftrightarrow z_{2n+1-i}$ and permutations $w$ such that the sets $\{1,\dots ,n\}$ and $\{n+1,\dots ,2n\}$ are preserved, and if $w(i)=j,$ then $w(2n+1-i)=2n+1-j$; call this last subgroup $W^C$. It is the diagonal inside $W^c\times W^c$ where the first $W^c\cong S_n$ acts on the first $n$ coordinates and fixes the last $n,$  and the second $W^c$ fixes the first $n$ coordinates and acts on the last $n$ coordinates. Representatives of the double cosets $W^\theta\backslash W/W^\theta$ are given by $W^c:=(1,W^c)$  This can be seen as follows. Composing  $w$ by $r\in W^\theta$ on the left, we can insure that the result takes $(x_1,\dots x_{n},-x_n,\dots ,-x_1)$ to $(\pm x_1,\dots,\pm x_n ,\dots )$; if $\pm x_1$ are both beyond place $n$, move $x_1$ to the first $n$ coordinates by reflecting with the appropriate $(i,2n+1-i)$. Continue this way until the first $n$ coordinates are $\pm x_1,\dots ,\pm x_n.$ Then use $W^C$ to order the $\pm x_i$ in increasing order. If now
$-x_i$ occurs, then compose $w$ with the reflection $r\in W^\theta$ about $(i,2n+1-i)$ on the right to change it so that $x_i$ is in the $i$th coordinate. 

Write $w^c$ for $(1,w^c).$   We check that $\sig(w^c\fk t)=w^c\fk t$ precisely when $w^c$ is an involution. Suppose $w^c(-x_i)=-x_j,$ and $w^c(-x_j)=-x_k$. If $k\ne i,$ then the fixed points of $\sig$ on $w^c\fk t$ have strictly smaller dimension than $n.$ 
Finally two involutions give $W^\theta-$conjugate $w^c\fk t$ spaces precisely when the two involutions are conjugate by $W^c.$  This follows from observing that conjugating $(1,w^c)$  by a $(w,w)\in W^C\subset W^\theta$ amounts to the same as conjugating $w^c\in W^c$ by $w\in W^c.$ 

\bigskip
The cases $k<n$  are similar. The centralizer of $\fk t^{n-k}$ is as before. The real Weyl group $W(G,H_0^{n-k})$ is the same as $W^\theta$ on the coordinates $1,\dots k, 2n+1-k,\dots ,2n$ and the full $S_{2n-2k}$ on the middle coordinates. Thus the conjugacy classes of $w\fk t^{n-k}$ are the same as for the fundamental Cartan subgroup for $GL(2k,\bb R),$  on the coordinates $1,\dots ,k,2n+1-k,\dots ,2n,$ and trivial on the middle coordinates.

\section{Kernel Computations}
\label{sec:ker}

Let $\tau$ be an automorphism of $G=KS$ of finite order (commuting with $\theta$), 
and write $G^+=G\rtimes\{1,\tau\}$. Then 
$G^+$ admits a Cartan decomposition $G^+=K^+ S$.
Let $P^+=M^+N\subset G^+$ be a parabolic subgroup so that
$M^+=M\rtimes\{1,\tau\};$ in particular, $\tau(N)=N.$ 
Let $(\rho, V)$ be an admissible representation of $M^+,$ and
$(\pi,\Ind_{P^+}^{G^+}\rho)$ be the induced representation. The
representation space is 
\[
\{ f:G^+\longrightarrow V_\rho\ :\ f(nm^+ g)=\rho(m^+)f(g)\}
\] 
with action of $G^+$ by translation on the right. The space can be
identified with \newline
$
\{ f:K\longrightarrow V_\rho\}.
$ 
Let $F\in C_c^\infty (G^+)$. Then $\pi(F)$ is defined as
\[
\pi(F):=\int_{G^+}\pi(g^+)F(g^+)\;
d g^+. 
\] 
It is well known  that $\pi(F)$ is given by integration against a
kernel:   
\[
\begin{aligned}
 \pi(F)f  (x) &=\int_{G^+} F(g^+)[\pi(g^+)f](x)\;dg^+=
\int_{G^+} F(x^{-1}g^+)f(g^+)\;dg^+.
\end{aligned}
\]
Write $G=PK,$ and let $g^+=\tau pk_1,\ x=k_2$. For the case 
$F\in C_c^\infty(G\tau),$ we can rewrite $\pi(F)f(k_2)$ as
\[
\int_K\big[\int_P F(k_2^{-1}\tau pk_1)\rho(\tau p)\;dp\big]\  f(k_1)\;dk_1.
\]

So the kernel is 
$$
\int_PF(k_2^{-1}\tau pk_1)\rho(\tau p)\;dp.
$$ 
The distribution character is 
\[
\Theta_\pi(F)=\int_K\int_P F(k^{-1}\tau p k)\Tr\rho(\tau p)\; dp\; dk.
\]
As in the untwisted case, for fixed $\tau m\in M^+_{\reg},$ the map 
\[
\begin{aligned}
&\Psi:N\longrightarrow N\\
&n\longrightarrow\Ad(\tau m)(n) n^{-1}  
\end{aligned}
\]
is 1-1 and onto, so we can rewrite the integral as
\begin{equation}
  \label{eq:tchar}
\begin{aligned}
\Theta_\pi(F)=&\int_{M_{\reg}}\Delta(m)\Tr\rho(\tau m)\int_K\int_N F(k^{-1}n^{-1}\tau
  m n k)\;dn\;dk\ dm=\\
=&\int_{M_{\reg}} \Delta(m)\Tr\rho(\tau m) \int_{G(\tau m)\backslash G}
F(g\tau mg^{-1})\; dg\;{dm}, 
\end{aligned}  
\end{equation}
where $G(\tau m)$ is the centralizer of $\tau m$ in $G,$ and
$\Delta (m)$ is the appropriate Jacobian for the map $\Psi.$ 

Recall that there are finitely many twisted Cartan subgroups in $M,$
label them $\tau H_1,\dots ,\tau H_k.$ Then we can rewrite
(\ref{eq:tchar})  as
\begin{equation}
  \label{eq:tchar2}
  \Theta_\pi(F)=\sum_{i=1}^k \int_{H_i} D(h_i)\Tr\rho(\tau
  h_i)\int_{H_i\backslash G} F(g\tau h_i g^{-1})\; dg.
\end{equation}
$\Delta$ and $D$ are Jacobians, we need not make them explicit.
By the generalization of Bouaziz of the results of Harish-Chandra,
$\Tr \rho$ and $\Theta_\pi$ are given by integration against  an
analytic locally $L^1-$function on the regular set. 
\begin{corollary}\label{c:twistedzero}
 $\Theta_\pi$ is zero on any twisted Cartan subgroup which is not
 conjugate to one in $M^+.$ 
\end{corollary}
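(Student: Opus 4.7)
The plan is to combine the character formula (\ref{eq:tchar2}) with the real-analyticity of $\Theta_\pi$ on the twisted regular set, reducing the corollary to a support argument.

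Fix a twisted Cartan subgroup $\tau H'$ of $\wht G$ which is not $G$-conjugate to any of $\tau H_1,\dots,\tau H_k$, and let $\tau h' \in \tau H'$ be strongly regular. First I would verify that $\tau h'$ cannot lie in
\[
\mathcal{O} := \bigcup_{i=1}^k \bigcup_{g \in G} g\,(\tau H_i)\,g^{-1}.
\]
Indeed, if $\tau h' = g(\tau h_i)g^{-1}$ for some $i$ and $g\in G$, then passing to the minimal-dimensional centralizer of the strongly regular element $\tau h'$ in $\fk g$ would identify the twisted Cartan subgroup containing $\tau h'$ with the $G$-conjugate $g(\tau H_i)g^{-1}$ (using Proposition \ref{p:cxtwisted} together with Definition \ref{d:twistedcsg}). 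This would make $\tau H'$ a $G$-conjugate of $\tau H_i$, contradicting the hypothesis.

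Next I would invoke the standard stratification of the twisted regular set $\wht G_{reg}\cap G\tau$ into finitely many open pieces indexed by $G$-conjugacy classes of twisted Cartan subgroups, which follows from the classification of Section \ref{tw conj cl}. The stratum containing $\tau h'$ is open in $\wht G_{reg}$ and, by the previous step, is disjoint from $\mathcal{O}$. Choose a small open neighborhood $U$ of $\tau h'$ inside this stratum and any test function $F \in C_c^\infty(U)$. Then $F(g\,\tau h_i\,g^{-1})$ vanishes identically for every $i$ and $g$, so the right-hand side of (\ref{eq:tchar2}) is zero, i.e.\ $\Theta_\pi(F)=0$.

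By the Harish-Chandra--Bouaziz regularity theorem cited in the paragraph preceding the corollary, $\Theta_\pi$ is represented on $\wht G_{reg}$ by integration against a real-analytic function $\theta_\pi$. The vanishing of $\Theta_\pi(F)$ for every $F\in C_c^\infty(U)$ forces $\theta_\pi\equiv 0$ on $U$; analytic continuation then propagates this vanishing to the connected component of $\tau h'$ in $(\tau H')_{reg}$, and running the argument for one strongly regular point in each component yields the desired vanishing of $\theta_\pi$ on the whole regular set of $\tau H'$. The main obstacle is the conjugacy-separation step in the first paragraph; once that is in place, the remainder is the familiar support plus analytic continuation argument from Harish-Chandra's theory.
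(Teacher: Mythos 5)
Your argument is correct and is exactly the intended one: the paper's proof is the single line ``this follows from formula (\ref{eq:tchar2})'', and what you have written is the natural elaboration of that --- the right-hand side of (\ref{eq:tchar2}) is supported on the union of $G$-conjugates of the $\tau H_i$, so test functions supported off that set are killed, and Bouaziz's regularity theorem upgrades the distributional vanishing to vanishing of the analytic character function on the twisted Cartan subgroups in question. No substantive difference from the paper's approach.
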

\begin{proof}
This follows from  formula (\ref{eq:tchar2}).
\end{proof}

\section{Indices of standard modules}
\label{sec:indices_std}

\subsection{Standard modules for complex groups} \ 
We are using standard notation for complex groups and their
  representations. See \eg \cite{BV} for a detailed treatment. Note
  however that there is a difference in that in the reference,
  $\la_L-\la_R$ is a weight of  the compact torus $T,$ and $\la_L
  +\la_R$ is a character of $A.$ 

Let $X(\la_L,\la_R)$ be a standard module with
Langlands quotient $\ovl{X}(\la_L,\la_R)$.  The Cartan subgroup is
$H=TA$ and the parameter corresponds to $\mu=\la_L +\la_R\in\wht T,$
$\nu=\la_L-\la_R\in\wht A.$ Since $\theta\mu=\mu$ and
$\theta\nu=-\nu,$ $\ovl{X}(\la_L,\la_R)$ extends to an irreducible
module of $G^+$ in two distinct ways if there is $w$ such that
$w\mu=\mu, w\nu=-\nu.$ If on the other hand there is no such $w\in W,$
then there is a unique irreducible module 
$\ovl{X}_{G^+}(\la_L,\la_R)$ which restricts to $G$ as $\ovl{X}(\la_L,\la_R)\oplus
\ovl{X}(\la_R,\la_L)$. 
The characters of these latter modules are 0 on $\wti G.$ So we only
consider the first kind.  In this case $\la_L$ is conjugate to
$\la_R$,  so we can write the parameter as
$(\la,w\la)$ for some $w\in W.$ Assume $2\la$ is regular; it is
already integral since it equals $\mu\in\wht T$ for the case
$\ovl{X}(\la,\la).$ 
Since there
must be $x\in W$ such that 
\[
\begin{aligned}
&x(\la +w\la)=\la+w\la,\\
&x(\la -w\la)=-\la+w\la,
\end{aligned}
\] 
it follows that $x=w$ is an involution. 

Assume that $\nu\ne 0,$ and let $P=MN$ be the parabolic subgroup such that 
\[
\begin{aligned}
&\Delta (M,H)=\{\al\ : (\al,\nu)=0\},\\
&\Delta (N,H)=\{\al\ : (\al,\nu)>0\}.  
\end{aligned}
\] 
Then $X(\la,w\la)=\Ind_P^G[X_M(\la,w\la)]$, and 
$X_M(\la,w\la)=X_M(\mu/2,\mu/2)\otimes\bb C_\nu$. The module
$X_M(\mu/2,\mu/2)$ is tempered, and it extends in two ways to an
irreducible module for $M^+.$  We will construct the two ways
below. Note that, being tempered,  
$X_M(\mu/2,\mu/2)=\ovl{X}_M(\mu/2,\mu/2)$. Denote by $(\rho,V_\rho)$
one such  $M^+-$module that has $\bX(\mu/2,\mu/2)$ as its Harish-Chandra
module. 

The element $\theta$ does not stabilize $N,$ so it is awkward to
define an action on $X(\la,w\la).$ However $\tau:=\theta w$ does
stabilize $N$, so it is natural to define its action as follows. 
Let $\rho(\theta):V_\rho\longrightarrow V_\rho$ be the intertwining operator 
satisfying
$\rho(\theta)\rho (m)=\rho(\theta(m))\rho(\theta).$ Since $w\mu=\mu$,
we can assume $w\in M,$ so $\rho(w)$ is well defined.   Denote by
$\pi$ the action of $G$ on $X(\la,w\la).$ Then for $f\in X(\la,w\la),$
define
\[
[\pi(\tau)f](x):=\rho(\tau^{-1})f(\tau (x)).
\]

We now define the action of $\theta$ in  the case
$\ovl{X_M}(\mu/2,\mu/2)=X_M(\mu/2,\mu/2)$. We suppress the subscript
  $M$ since this can be thought of as the case of $G$ and $\nu=0.$  
Let $(\fk b,\fk h)$ be a $\theta-$stable (complex) pair
of a Borel subalgebra and a Cartan subalgebra. The module $X(\la,\la)$
is tempered irreducible (therefore also unitary), and derived functor induced:
\[
\C L_{\fk b}^i(\bb C_{\tilde\mu})=
\begin{cases}
  X(\la,\la) &\text{ if } i=\dim \fk n\cap \fk k, \\
  0          &\text{ otherwise.}  
\end{cases}
\]
Here $\tilde\mu = 2\lambda-2\rho$ and we are using the unnormalized 
cohomological induction as in \cite{KV}, Chapter 5.
 
There are two ways to normalize the action of $\theta$ on
$X_\ep(\la,\la).$ The first one is to require that $\theta$ act by
$\ep$ on the lowest $K-$type $\mu=2\la.$ The second one is to    
denote by $\bb C_{\tilde\mu;\eta}$ the 
$T^+-$module which is equal to $\bbC_\wti\mu$ as a $T-$module and on 
which $\theta$ act by $\eta$. Then $X_\eta(\la,\la)$ is the 
$(\frg,K^+)-$module cohomologically induced from $\bb
C_{\tilde\mu;\eta}$ viewed as an $(\fk h,T^+)$ module.
We will use the first normalization, and the relation between the two
is   $\eta=\ep(-1)^{\dim (\fru\cap\frs)}$ (see the end of the proof of
  Theorem \ref{t:twcx}).

\begin{theorem}\label{t:twcx} Normalize the action of $\theta$ on
  $X_\ep(\la,\la)$ so that it acts by $\ep$ on the lowest $K-$type $\mu=2\la.$ 
  The twisted index of $X_\ep(\la,w\la)$ is 
\[
{I_\theta}[X_\ep(\la,w\la)]=
\begin{cases}
  0 &\text{ if } w\ne 1,\\
r\ep E_{2\lambda-\rho} &\text{ if } w=1,
\end{cases}
\]
where $r=[\Spin:E_\rho]$.
\end{theorem}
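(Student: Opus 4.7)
The argument splits into two cases: $w=1$ (main computation) and $w\ne 1$ (vanishing).

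\smallskip
\noindent\textbf{Case $w=1$.} Here $\nu=0$ and $X_\ep(\la,\la)$ is the tempered principal series, realized as $\C L_{\frb}^{\dim(\frn\cap\frk)}(\bb C_{\tilde\mu})$ for $\tilde\mu=2\la-2\rho$. The plan is to start from (\ref{twisted index formula}), which in the $\gamma=\theta\otimes 1$ setup rewrites as
\[
I_\theta(X) \;=\; \Tr(\theta,X)\otimes S \quad \text{as virtual } \Kt\text{-modules.}
\]
For $G$ complex we have $\theta|_K=\id$, so $\theta$ commutes with the $K$-action on $X_\ep(\la,\la)$ and acts as an intertwiner on each $K$-isotype. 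Normalizing $\theta$ to act as $\ep$ on the multiplicity-one lowest $K$-type $E_{2\la}$, I would track the $\theta$-action through the cohomological induction realization, using the relation $\eta=\ep(-1)^{\dim(\fru\cap\frs)}$ recalled just before the statement, to conclude that $\theta$ acts as $\ep$ on every $K$-isotype of $X_\ep(\la,\la)$. Hence $\Tr(\theta,X_\ep(\la,\la))=\ep\cdot X(\la,\la)$ in the Grothendieck group of $K$-modules, and so $I_\theta(X)=\ep\cdot X(\la,\la)\otimes S$. Theorem \ref{HPmain} then forces the output to be a multiple of $E_{2\la-\rho}$, and its multiplicity is $r=[\Spin:E_\rho]$ by combining $X|_K\cong\Ind_T^K\bb C_{2\la}$, Frobenius reciprocity, and the definition of $r$.

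\smallskip
\noindent\textbf{Case $w\ne 1$.} I would deduce vanishing from the character--index identity (\ref{eq:twcharacter}):
\[
\Theta(X)\big|_{\theta T^{reg}}\;=\;\frac{\ch(I_\theta(X))}{\ch(S)}\bigg|_{\theta\wti T^{reg}}.
\]
Since $\ch(S)$ is nonvanishing on the regular set, it suffices to show $\Theta(X_\ep(\la,w\la))$ vanishes on $\theta T^{reg}$. Since $X(\la,w\la)=\Ind_P^G X_M(\la,w\la)$ with extension via $\tau=\theta w$, the induced-character formula (\ref{eq:tchar2}) expresses $\Theta_\pi$ as a sum over $\tau$-twisted Cartan subgroups of $\wht M$. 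By Theorem \ref{tw conj cx}, twisted Cartan subgroups of the complex group $G$ (resp.\ $M$) are classified by conjugacy classes of involutions in $W$ (resp.\ $W_M$), with the fundamental one labelled by $1\in W$. The contribution of a $\tau$-twisted class in $\wht M$ corresponds via $G$-conjugacy to the $G$-twisted Cartan labelled by the involution $w\ne 1$; hence no summand in (\ref{eq:tchar2}) lands on $\theta T^{reg}$, so $\Theta|_{\theta T^{reg}}=0$ and therefore $I_\theta(X)=0$.

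\smallskip
\noindent\textbf{Main obstacle.} The harder case is $w\ne 1$: the bookkeeping needed to match $\tau$-twisted Cartan subgroups of $\wht M$ with $G$-conjugacy classes of twisted Cartans of $\wht G$, and in particular to verify that the class labelled by $1\in W$ never occurs, is the main technical hurdle. For $w=1$ the difficulty is smaller but real: one must consistently control the $\theta$-normalization between the lowest-$K$-type convention and the cohomologically induced one (through the sign $(-1)^{\dim(\fru\cap\frs)}$), and pin down the multiplicity $r$ via the branching $\Spin\supset E_\rho$.
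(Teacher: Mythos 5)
Your treatment of the case $w\ne 1$ matches the paper: express $X(\la,w\la)$ as Harish--Chandra induction from the proper parabolic $\wht P=\wht MN$ determined by $\nu=\la-w\la$, invoke the kernel computation of Section~\ref{sec:ker} (specifically Corollary~\ref{c:twistedzero}) to see the twisted character is supported away from the fundamental twisted Cartan $\theta T$, and then use~(\ref{eq:twcharacter}) to conclude the index vanishes. This is essentially the paper's argument.

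The case $w=1$ contains a genuine error. You assert that $\theta$ acts by the single scalar $\ep$ on \emph{every} $K$-isotype of $X_\ep(\la,\la)$, so that $\Tr(\theta,X_\ep(\la,\la))=\ep\cdot X(\la,\la)$ in the Grothendieck group of $K$-modules and hence $I_\theta(X)=\ep\,X(\la,\la)\otimes S$. This cannot be right: if $\theta$ were the scalar $\ep$ on all of $X$, then $X^+=X$ and $X^-=0$ (say for $\ep=1$), so the right-hand side of the twisted index formula~(\ref{twisted index formula}), namely $X^+\otimes S - X^-\otimes S = X\otimes S$, would be an honest infinite-dimensional $\Kt$-module rather than a finite virtual one. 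That contradicts the finite-dimensionality of $H_D(X)$ and the cancellation mechanism in Proposition~\ref{general index} (the nonzero $D^2$-eigenspaces must cancel in the difference, which requires $\theta$ to have nontrivial eigenvalue $-\ep$ somewhere on $X$). Indeed $X(\la,\la)\big|_K\cong\Ind_T^K\bbC_{2\la}$ is generally \emph{not} $K$-multiplicity free, and within a single $K$-isotype $\theta$ acts as a nonscalar involution. The correct route, as in the paper, is to compute $H_D(X)$ first: write any $\Kt$-type $E_\tau\subset H_D(X)$ as $\tau+\rho=2\la+\sum_\beta k_\beta\beta$ with $k_\beta\ge 0$; the constraint $\|\tau+\rho\|^2=\|2\la\|^2$ from Theorem~\ref{HPmain} forces $2\langle 2\la,\sum k_\beta\beta\rangle+\|\sum k_\beta\beta\|^2=0$, hence all $k_\beta=0$. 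Thus $H_D(X)$ is the single $\Kt$-type $E_{2\la-\rho}$ with multiplicity $r=[\Spin:E_\rho]$, realized entirely inside (lowest $K$-type)$\otimes S$. Only then does one invoke the normalization: since the contribution comes solely from the lowest $K$-type, on which $\theta$ acts by $\ep$, one gets $I_\theta(X)=\ep\, r\, E_{2\la-\rho}$. The role of the sign $(-1)^{\dim(\fru\cap\frs)}$ in the paper is not to propagate $\ep$ to all $K$-types, but to compare the lowest-$K$-type normalization $\ep$ to the cohomologically-induced normalization $\eta$ via the bottom layer map.

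Also a minor point: the multiplicity $r$ is not obtained by Frobenius reciprocity for $X|_K\cong\Ind_T^K\bbC_{2\la}$ alone; you need the fact (established by the norm argument above) that $H_D(X)$ is exactly the PRV component of $E_{2\la}\otimes S$, at which point $r$ is the multiplicity of $E_\rho$ in $S$.
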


\begin{proof} Assume first that $w\ne 1.$ There are two
distinct actions of $\theta$ on $X_M(\mu,\mu)$, they lift to
$X(\la,w\la),$ and therefore also to 
$\ovl{X}(\la,w\la).$  The resulting modules are denoted
${X}_\ep(\la,w\la)$ and $\ovl{X}_\ep(\la,w\la)$ with $\ep=\pm 1.$
By Frobenius reciprocity, $\theta$ acts by the same scalar on the lowest
$K-$type of $X(\la,w\la)$ and the lowest $M\cap K-$type of
$X_M(\mu,\mu)$. This sign is not important for us since the index will
be shown to be 0 in both cases.

Let $P=MN$ be the parabolic subgroup determined by $\nu=\la-w\la.$
Since $\nu\ne 0,$ $P$ is a proper parabolic subgroup stabilized by
$w\theta.$ The kernel calculation in Section \ref{sec:ker} implies that
the distribution character is supported on $\Ad G(w\theta M)$ which does
not intersect $\theta T_0$ (Corolllary \ref{c:twistedzero}). Therefore the character is 0 on $\theta T_0.$ 
Formula  \ref{eq:twcharacter} implies that the index is 0 as claimed. 

\medskip
Assume now $w=1.$ 
By the usual arguments, one sees that Dirac cohomology of
$X=X(\lambda,\lambda)$ is obtained as the PRV component of the tensor product
of the lowest $K-$type of $X,$ and the spin module $S$.
Namely, any $K-$type of $X$ has highest weight of the form
$2\lambda+\sum_\beta n_\beta\beta$, 
where $\beta$ are positive roots and $n_\beta$ nonnegative integers.
On the other hand, any weight of $S$ is of the form
$-\rho+\sum_\beta m_\beta\beta$, with 
each $m_\beta$ being 0 or 1. Putting $k_\beta=n_\beta+m_\beta$, we see
that $H_D(X)$ consists of $K^\dagger-$modules $E_\tau$ satisfying 
\[
\tau+\rho=2\lambda+\sum_\beta k_\beta\beta
\]
with $\tau +\rho$ conjugate to $2\la,$ the infinitesimal character of $X$
restricted to $\fk t.$  In particular, $\|\tau+\rho\|^2=\|2\lambda\|^2$,
so 
\[
2\langle 2\lambda,\sum_\beta k_\beta\beta\rangle +\|\sum_\beta
k_\beta\beta\|^2 = 0. 
\]
Since each of the summands is nonnegative, they all have to be 0, so
all $k_\beta=0$, which implies the claim. Thus $H_D(X)$ 
is a single $K^\dagger-$ type $E_{2\lambda-\rho}$, with multiplicity $r=[\Spin:E_\rho]$.

It remains to consider the action of $\theta$ on the lowest $K-$type
of $X$, which is also the lowest $K^+-$type of
$X_\ep(\lambda,\lambda)$.  This lowest $K-$type $V$ is in the
bottom layer, as in section V.6 of \cite{KV}. Corollary 5.85 of \cite{KV} gives
\[
\Hom_{K^+}[\C L_S(\bbC_{\tilde\mu}),V]\cong\Hom_{T^+}[\bb
C_{\tilde\mu}\otimes\twedge^R(\fk n\cap\fk s),V^{\fk n\cap\fk k}]
\]
with $R=\dim(\fk n\cap \fk s)$ and $S=\dim(\fk n\cap\fk k).$
The action of $\theta$ on $V$ is by the same scalar
$\eta$ as the action on $V^{\frn\cap\frk}$. The action on $Z$ is by
$\ep$, and the action on $\twedge^R\frn\cap\frs$ is by
$(-1)^R=(-1)^{\dim\frn\cap\frs}$. 
\end{proof}

\subsection{Standard modules for real groups}
\label{std_real} 

We use the results, and some of the notation, in \cite{KV}, particularly chapter XI.  
The Langlands classification exhibits every irreducible module as a canonical quotient of a  standard module. The precise definition and statements of results are summarized in chapter 11 of \cite{ABV}. We will only use the following description of the standard modules.
\bed[Standard Module]
\label{d:langlands}
A standard module is a (Harish-Chandra  induced) module 
$$
X(P,\delta,\nu)=\Ind_{P}^G[\delta\otimes \bC_\nu]
$$ 
where the data $(P,\delta,\nu)$ are as follows.
\begin{enumerate}
\item $P=MAN$ is a real parabolic subgroup of $G$ with $M$ equal rank,
\item $\delta$ is a limit of discrete series of $M,$
\item $\nu\in \fk a^*$ satisfies $\langle \Rea\nu,\al\rangle \ge 0$, and satisfies the ``parity condition'' (11.10g) in \cite {ABV}.
\end{enumerate}
\ebed

\bethm[Langlands classification]
With $(P,\delta,\nu)$ as in Definition \ref{d:langlands}, $X(P,\delta,\nu)$ has a unique irreducible quotient denoted  $\bX(P,\delta,\nu).$ Two such modules are equivalent if and only if their parameters $(P,\delta,\nu)$ and $(P',\delta',\nu')$ are conjugate under $G.$
Any irreducible admissible module is equivalent to an $\bX(P,\delta,\nu).$
\ebethm
Recall  that $G$  is the real points of a linear algebraic reductive connected group. We follow \cite{KV} for the parametrization of the limits of discrete series. The group $M$ is equal rank but possibly disconnected. Let $\fk h=\fk t +\fk a$ be a $\theta-$stable Cartan subalgebra such that $\fk t$ is a compact Cartan subalgebra of $\fk m.$ Let $H=TA$ be the corresponding Cartan subgroup; by the assumptions on the group it is abelian. A datum for a limit of discrete series is a $\theta-$stable Borel subalgebra $\fk b_M$ containing $\fk t$ and an irreducible representation $\lambda$ of $T,$ such that 
\eq
\label{wgood_range}
\langle d\lambda+\rho(\fru),\alpha\rangle \geq 0,\qquad \alpha\in\Delta(\fru),
\eeq
where $S=\dim\fru\cap\frk$. See \cite{KV}, Chapter XI. 

\bed
Let $\fk b=\fk b_M+\fk n$ be the Borel subalgebra containing $\frh.$ We will denote by $A_\frb(\la)$ the derived module $\C L_S(\la)$ where we view $\la$ as a 1-dimensional $(\frh,T)-$module consisting of the datum for $\delta$ and $\nu$.
\ebed

The results in \cite{KV}, particularly  \cite{KV}, Theorem 11.129 (c), imply that also
$$
\C L_q(\la)=
\begin{cases}
X(P,\delta,\nu) &\text{ if } q=S,\\
0  &\text{ if } q\ne S.
\end{cases}
$$

\begin{theorem}
\label{index_std_real}
Assume $\frg$ and $\frk$ have equal rank.  The index of the standard module $A_\frb(\lambda)$ is 
\[
I(A_\frb(\lambda))=
\begin{cases}
  0 &\text{ if } \frb \text{ is not }\theta-\text{stable},\\
E_{\lambda+\rho(\fru\cap\frs)} &\text{ if } \frb \text{ is }\theta-\text{stable}.
\end{cases}
\]
\end{theorem}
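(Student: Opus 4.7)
The plan is to split into the two cases of the statement.

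If $\frb$ is not $\theta$-stable, then the Cartan $\frh\subset\frb$ has nontrivial split part $\fra$, and $A_\frb(\la)$ is a standard module real-induced from a proper parabolic $P=MAN$ with $A\neq\{1\}$. The character/kernel computation of Section \ref{sec:ker} in the untwisted case $\tau=1$ shows that the distribution character $\Theta_{A_\frb(\la)}$ is supported on $G$-conjugates of Cartan subgroups containing $A$. In the equal rank case, the compact Cartan $T$ is not $G$-conjugate to any such Cartan, so $\Theta_{A_\frb(\la)}\big|_{T^{reg}}=0$. Substituting into the character formula (\ref{eq:character}) and using that $\ch(S^+-S^-)$ is not identically zero on $\wti T^{reg}$ (it is, up to sign, the noncompact part of the Weyl denominator), I conclude that $\ch(I(A_\frb(\la)))\equiv 0$ on $\wti T^{reg}$, whence $I(A_\frb(\la))=0$ in the Grothendieck group of virtual $\Kt$-modules.

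If $\frb$ is $\theta$-stable, then $\frh=\frt$ is the compact Cartan and $A_\frb(\la)$ is a (limit of) discrete series, hence unitary, so $H_D(A_\frb(\la))=\ker D=\ker D^2$ and the index equals $H_D^+ - H_D^-$. The infinitesimal character of $A_\frb(\la)$ is $\Lambda=\la+\rho(\fru)$, and choosing the positive system of $\frk$ compatibly with $\fru$ gives $\rho_\frk=\rho(\fru\cap\frk)$, so that $\Lambda-\rho_\frk=\la+\rho(\fru\cap\frs)$. The lowest $K$-type of $A_\frb(\la)$ is $E_{\la+2\rho(\fru\cap\frs)}$ by the Blattner/bottom layer formula (cf.\ \cite{KV}), and the PRV component of its tensor product with the spin module $S$ has highest weight $\la+\rho(\fru\cap\frs)$. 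Following the argument in the proof of Theorem \ref{t:twcx}, Theorem \ref{HPmain} together with the norm identity $\|\gamma+\rho_\frk\|^2=\|\Lambda\|^2$ forces every $\Kt$-type $E_\gamma$ occurring in $H_D$ to coincide with this PRV component with multiplicity one. Since the lowest weight $-\rho(\fru\cap\frs)$ of $S$ lies in $S^+$ (it realizes the summand $\twedge^0$ of a maximal isotropic), the PRV component sits in $(A_\frb(\la)\otimes S)^+$, yielding $I(A_\frb(\la))=E_{\la+\rho(\fru\cap\frs)}$ with the correct sign.

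The main obstacle is the Case 2 computation of $H_D$, and specifically ruling out any additional $\Kt$-type beyond the PRV component; this is the direct analog of the corresponding step in the proof of Theorem \ref{t:twcx}, and it uses unitarity to equate $H_D$ with $\ker D^2$. Case 1 reduces quickly to character vanishing on the compact Cartan. Some care must be taken for limits of discrete series with singular $\Lambda$, but both the norm argument and the identification of the PRV component go through unchanged under the weak dominance condition (\ref{wgood_range}).
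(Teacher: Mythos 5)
Your proposal is correct and follows essentially the same route as the paper: for non-$\theta$-stable $\frb$ you realize $A_\frb(\lambda)$ as real-induced from $P=MAN$ and use the kernel computation of Section \ref{sec:ker} to get vanishing of the character on the compact Cartan, hence vanishing of the index; for $\theta$-stable $\frb$ you identify $H_D$ as the single $\Kt$-type given by the PRV component of (lowest $K$-type)$\otimes S$, lying in the $+$ part because the relevant weight space of $S$ sits in $\twedge^0\frs^+\subset S^+$. The only cosmetic difference is that you spell out the norm argument and the unitarity input that the paper delegates to the proof of Theorem \ref{t:twcx} and to \cite{HP1}, \cite{HKP}.
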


\begin{proof}
If $\frb$ is $\theta-$stable, then it is well known and easy to see that 
$H_D(A_\frb(\lambda))$ is a single $K^\dagger-$type $E_{\lambda+\rho(\fru\cap\frs)}$. 
The computation is essentially the same as in the proof of Theorem \ref{t:twcx}. 
See also \cite{HP1}, \cite{HKP}. This $K^\dagger-$type appears in the tensor product 
of the lowest $K-$type of $A_\frb(\lambda)$ with the $K^\dagger-$type of the spin 
module $S$ containing the element 1. Hence 
$H_D(A_\frb,\lambda)=H_D^+(A_\frb,\lambda)$, and the result follows.

Assume now that $\frb$ is not $\theta-$stable. 
In view of (\ref{index formula}), the result will follow if we prove that the 
character of $A_\frb(\lambda)$ vanishes on the compact Cartan subalgebra. 
This can be proved by expressing the standard module $A_\frb(\lambda)$ 
using induction in stages, as real induced from a cohomologically induced 
module; see \cite{KV}, Theorem 11.172 and Corollary 11.173. To do this, 
we consider the group $MA=Z_G(\fra)$ with $M$ equal rank, 
and we let $P=MAN$ be the 
associated real parabolic subgroup of $G$. Then $T$ is a compact
Cartan subgroup  of $M$, and  the Borel subalgebra $\frb_\frm=\frb\cap\frm$
of $\frm$ is  $\theta-$stable. 
It follows that the standard module $A_\frb(\lambda)$ is 
Harish-Chandra  induced from an $A_{\frb\cap\frm}(\lambda_M)\otimes\bC_\nu$ of
$MA\subset P$  to $G$. Now we can apply 
the kernel computations of Section \ref{sec:ker} to conclude that the
character of $A_\frb(\lambda)$ is zero on the compact Cartan subgroup. 
\end{proof}

We now drop the equal rank assumption, and we consider modules for
the extended group $G^+$.
To understand what the standard modules are in this case, we first
note that twisting a standard $(\frg,K)-$module 
$A_\frb(\lambda)$ by $\theta$, we get 
$A_{\theta\frb}(\theta\lambda)$. 
The reason is as follows. The
$(\fk b,T)-$module $\bC_\la$ with the action twisted by $\theta$, 
is isomorphic to the $(\theta\fk b,T)$
module $\bC_{\theta\la},$ and $\bC_\la^\#$ with the action twisted by $\theta$ 
is isomorphic to $\bC_{\theta\la}^\#$. Thus the $(\fk g, T)-$module 
$M(\fk b,\la):=U(\fk g)\otimes_{U(\fk b)}\bC_\la$, with the action twisted by
$\theta$, is isomorphic to
$M(\theta\fk b,\theta\la):=U(\fk g)\otimes_{U(\theta\fk b)}\bC_{\theta\la}.$ 
 The action of $\fk g$ on the corresponding derived
module comes from the action of $\fk g$ on  $\Hom[R(K),M(\fk
b,\la)]$ given by $(X\cdot F)(k)=(\Ad k (X))F((k)).$ Twisting the action
by $\theta$ yields the action of $\fk g$ on $\Hom[R(K),M(\theta\fk
b,\theta\la)]$. 

This leads to three cases:
\begin{enumerate}
\item $\theta\frb=\frb$ and $\theta\lambda=\lambda$;
\item $A_{\theta\frb}(\theta\lambda)$ is not isomorphic to $A_{\frb}(\lambda)$;
\item $A_{\theta\frb}(\theta\lambda)$ is isomorphic to
  $A_{\frb}(\lambda)$, but $\theta\frb\neq\frb$. In this case, we may assume that $\theta\lambda\neq\lambda$; otherwise, we could modify $\frb$ to be in Case 1.
\end{enumerate}

\noindent{\bf Case 1.} Now $A_\frb(\lambda)$ is a module for $G^+$,
in two ways, distinguished by the sign $\ep=\pm 1$ by which $\theta$
acts on the unique lowest $K-$type. We denote the module corresponding
to $\ep$ by $A_\frb^\ep(\lambda)$.  

Another way to obtain these modules
is to specify an action of $\theta$ on the $(\frh,T)-$module $\bbC_\lambda$
to make it into an $(\frh,T^+)-$module, and then use cohomological
induction to obtain a $(\frg,K^+)-$module. To link the two constructions,
we have to compare the actions of $\theta$ on $\bbC_\lambda$ and on
the lowest $K-$type of $A_\frb(\lambda)$. Assume that $\theta$ acts by $\ep'=\pm 1$ 
on $\bC_\la.$ Since
$\bC_\la^\#=\bigwedge^{\top}\fru\otimes \bC_\la,$ $\theta$ acts on
$\bC_\la^\#$ by $\ep'(-1)^R$ where $R=\dim\fru\cap\frs$. 
Let $V$ be a $K^+-$type with $\theta$ acting by $\eta=\pm 1$. 
By Proposition 5.71 in \cite{KV},
\[
\Hom_{K^+}[\C L_j^{K^+}(\bC_\la),V]\cong\Hom_{T^+}[\bC_\la^\#,H^j(\ovl{\fru}\cap \fk k,V)].
\]
Setting $j=S=\dim(\fru\cap\fk k),$ we find that this is nonzero
precisely when $\eta=(-1)^R\ep'.$ On the other hand, by the results on
page 365 and Theorem 5.80 of \cite{KV}, there is a one-to-one $K^+-$equivariant
\textbf{bottom layer map}
$$
\C B:\C L^{K^+}_S(\bC_\la)\longrightarrow \C L_S(\bC_\la)=A_{\fk b}(\la).
$$
Its image is called the \textbf{bottom layer $K^+-$types}.
By  Proposition 10.24 and Chapter V.6 in \cite{KV}, 
the lowest $K^+-$type of $A_{\fk b}^\ep(\la)$ is the unique $K^+-$type in
the bottom layer $K^+-$ types. So we see that the module $A_{\fk
  b}(\la)$ with $\theta$ action by $\ep$ on
the lowest $K-$type is obtained by applying cohomological induction to
the module $\bC_\la$ with $\theta-$action $\ep(-1)^R$.

As in the proof of Theorem \ref{index_std_real}, the Dirac cohomology
of $A_\frb(\lambda)$ is equal to the $K^\dagger-$type
$E_{\lambda+\rho(\fru\cap\frs)}$, 
but the multiplicity is
no longer equal to 1. Rather, the multiplicity is equal to
the multiplicity
of irreducible summands of the spin module, which is equal to $[\frac{1}{2}\dim\fra]$ (recall that $\frh=\frt\oplus\fra$). This is the same as $\dim S_{(\frh,T)}$, where $S_{(\frh,T)}=\bigwedge\fra^+$ is the spin module for the pair $(\frh,T)$. The only $K-$type of $A_\frb(\lambda)$
  contributing to the Dirac cohomology is the lowest $K-$type, so the
  action of $\theta$ on the Dirac cohomology is by $\ep$. So
\[
I_\theta(A_\frb^\ep(\lambda))=\ep E_{\lambda+\rho(\fru\cap\frs)}\otimes S_{(\frh,T)}.
\] 

\noindent{\bf Case 2.}  
The corresponding standard module for $G^+$ is {$M_{\frb,\lambda}=A_\frb(\lambda)\oplus
A_{\theta\frb}(\theta\lambda)$, with the two summands interchanged 
by $\theta$. More precisely, fixing an isomorphism $\Phi$ from the $\theta-$twisted $A_\frb(\lambda)$
onto $A_{\theta\frb}(\theta\lambda)$, we can define the action of $\theta$ on $v+w\in A_\frb(\lambda)\oplus
A_{\theta\frb}(\theta\lambda)$ by
\[
\theta(v+w)=\Phi^{-1}w + \Phi v \quad \in A_\frb(\lambda)\oplus
A_{\theta\frb}(\theta\lambda)
\]
Choosing a different isomorphism $\Phi$ would lead to an isomorphic module.

Now it follows that 
$I_\theta(M_{\frb,\lambda})=0$, because $H_D(M_{\frb,\lambda})$
decomposes into two pieces interchanged by $\theta$, and hence the trace of $\theta$ on this space is zero.

\medskip
\noindent{\bf Case 3.}  In this case there is $k\in K$ such that
  $\Ad(k)\theta\frb=\frb$, and also $\Ad(k)\theta\lambda=\lambda$. 
We  can write $\frb=\frh\oplus\frn$ with $\theta\frh=\frh$ and
  $\Ad(k)\frh=\frh$. Let $\chi=\lambda+\rho_\frb$ denote the
  infinitesimal character of $A_\frb(\lambda)$. We assume $\chi$ is
  regular; for singular $\chi$ the result then follows by translation
  principle. Since $\theta\frh=\frh$, we can decompose
  $\frh=\frt\oplus\fra$ into eigenspaces for $\theta$. Let
  $\chi=(\sigma,\nu)$ be the corresponding decomposition of $\chi$.  

The above $k$ determines an element $w$ of the Weyl group $W(G,H)$, and
\[
w(\sigma,\nu)=(\sigma,-\nu).
\]
Moreover, $k$ can be chosen so that $k^2=(k\theta)^2$ is in the center
of $G$; so the above $w$ is an involution. 

Similarly to Case 1, we have two options for an action of $k\theta$, which can
be distinguished by the sign they produce on the lowest $K-$type. We denote
the corresponding standard modules by $A_\frb^\ep(\lambda)$, $\ep=\pm 1$.

We can compute the twisted character $\Theta$ of $A_\frb^\ep(\lambda)$
by using the kernel computation of Section
\ref{sec:ker} based on $\tau=k\theta$. 
Let $\frp=\frm\oplus\fru\supseteq\frb$ be the parabolic
subalgebra of $\frg$ 
corresponding to $\nu$; so $\frm$ is the centralizer of $\nu$. Then
$\tau\frm=\frm$ and $\tau\fru=\fru$. Let $H_0=T_0A_0$ be the fundamental Cartan subgroup,
and let $T_0'\theta$ be the set of strongly regular elements. 
By Corollary \ref{c:twistedzero}, 
$\Theta$ is supported on $\Ad(G)(Mk\theta)$. But 
$T'_0\theta$ does not meet $\Ad(G)(Mk\theta)$, so it follows
that $\Theta=0$ on $T_0'\theta$. By equation (\ref{eq:twcharacter}) in
Section \ref{sec:characters}, this implies that the twisted Dirac
index of $A_\frb^\ep(\lambda)$ is 0. 

We have proved:

\begin{theorem}
\label{tw_index_std_real}
If $\theta\frb=\frb$ and $\theta\lambda=\lambda$, the twisted Dirac indices of the standard $(\frg,K^+)-$modules
$A_\frb^\ep(\lambda)$ are
\[
I_\theta(A_\frb^\ep(\lambda))=\ep E_{\lambda+\rho(\fru\cap\frs)}\otimes S_{(\frh,T)}.
\] 
The twisted Dirac indices of all other standard modules are zero. \qed
\end{theorem}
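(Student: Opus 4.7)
The plan is to split the argument according to the three cases enumerated immediately before the theorem, depending on how $\theta$ acts on the pair $(\frb,\lambda)$. The Case~1 assertion (the only case that contributes a nonzero index) is handled by an explicit computation of the Dirac cohomology together with tracking the action of $\theta$ on its lowest $K$-type. The other two cases reduce to showing that the twisted character of the corresponding standard module vanishes on the relevant twisted Cartan subgroup; by formula (\ref{eq:twcharacter}), this forces the twisted Dirac index to vanish.

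In Case~1 (\emph{$\theta\frb=\frb$, $\theta\lambda=\lambda$}) I first compute $H_D(A_\frb(\lambda))$ following the argument used in Theorem \ref{index_std_real} (and in the $w=1$ part of the proof of Theorem \ref{t:twcx}): infinitesimal character considerations together with the inequality $\|\tau+\rho\|^2=\|\lambda+\rho\|^2$ pin down a single $\Kt$-type $E_{\lambda+\rho(\fru\cap\frs)}$, with multiplicity equal to $\dim S_{(\frh,T)}$, and this cohomology is entirely in the plus part. Next I determine the sign with which $\theta$ acts. Only the lowest $\wht K$-type of $A_\frb^\ep(\lambda)$ contributes to $H_D$, and by construction $\theta$ acts on this lowest $K$-type by $\ep$. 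Combining, one obtains $I_\theta(A_\frb^\ep(\lambda))=\ep\, E_{\lambda+\rho(\fru\cap\frs)}\otimes S_{(\frh,T)}$, as claimed. The translation between the two natural normalizations (action on $\bbC_\lambda$ versus action on the lowest $K$-type) is controlled by the sign $(-1)^R$ with $R=\dim\fru\cap\frs$, as noted in the discussion preceding the theorem.

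In Case~2 (\emph{$A_{\theta\frb}(\theta\lambda)\not\cong A_\frb(\lambda)$}), the standard module for $\wht G$ is $M_{\frb,\lambda}=A_\frb(\lambda)\oplus A_{\theta\frb}(\theta\lambda)$, with $\theta$ swapping the summands via a fixed isomorphism $\Phi$. The Dirac cohomology correspondingly splits as a direct sum of two $\Kt$-modules interchanged by $\theta$. Therefore in any basis respecting the decomposition $H_D(M_{\frb,\lambda})=H_D(A_\frb(\lambda))\oplus H_D(A_{\theta\frb}(\theta\lambda))$ the matrix of $\theta$ is block off-diagonal, hence has trace zero; this gives $I_\theta(M_{\frb,\lambda})=0$.

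Case~3 (\emph{$A_{\theta\frb}(\theta\lambda)\cong A_\frb(\lambda)$ but $\theta\frb\neq\frb$}) is where the main technical obstacle lies, since here $\theta$ genuinely acts on a standard module of the form $A_\frb(\lambda)$ and one must rule out a nonzero index by a character argument. Choose $k\in K$ with $\Ad(k)\theta\frb=\frb$ and $\Ad(k)\theta\lambda=\lambda$, chosen so that $(k\theta)^2$ is central, and set $\tau=k\theta$. Decomposing $\frh=\frt\oplus\fra$ under $\theta$ and $\chi=\lambda+\rho_\frb=(\sigma,\nu)$ accordingly, the associated Weyl element $w\in W(G,H)$ is an involution with $w(\sigma,\nu)=(\sigma,-\nu)$, so $\nu\neq 0$. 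Let $\frp=\frm\oplus\fru\supseteq\frb$ be the $\tau$-stable parabolic corresponding to $\nu$; then $A_\frb^\ep(\lambda)$ is real-induced from a module on $M$, so the kernel formula (\ref{eq:tchar2}) applies. By Corollary \ref{c:twistedzero}, the twisted character $\Theta$ of $A_\frb^\ep(\lambda)$ is supported on $\Ad(G)(Mk\theta)$. The fundamental twisted Cartan subgroup $T_0'\theta$ does not meet $\Ad(G)(Mk\theta)$, because if it did, the corresponding twisted Cartan subgroups would be $G$-conjugate, contradicting $\nu\neq 0$ (these groups live in inequivalent twisted conjugacy classes by the classification of Section \ref{tw conj cl}). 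Hence $\Theta\equiv 0$ on $T_0'\theta$, and formula (\ref{eq:twcharacter}) forces $I_\theta(A_\frb^\ep(\lambda))=0$. The hardest step is precisely this last nonintersection assertion, as it relies on the classification of twisted Cartan subgroups up to $G$-conjugacy and on correctly identifying $\tau$-stable parabolics attached to the real part of the infinitesimal character.
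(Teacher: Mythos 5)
Your proposal follows the paper's own argument case for case: Case~1 by identifying $H_D$ as a single $\Kt$-type with multiplicity $\dim S_{(\frh,T)}$ and reading off the sign from the lowest $K$-type via the bottom layer map, Case~2 by the trace of a swap being zero, and Case~3 by the kernel computation of Section~\ref{sec:ker}, Corollary~\ref{c:twistedzero}, and formula~(\ref{eq:twcharacter}). This is essentially identical in structure and key steps to the proof given in the paper (and your explicit observation that $\nu\neq 0$ in Case~3 is a welcome clarification of a point left implicit there).
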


\subsection{Applications}\label{sec:examples}
Consider the two special cases of Dirac index we have considered in the introduction. One is the ordinary Dirac index,
$I(X)=X\otimes (S^+-S^-)$ for a $(\frg,K)-$module $X$, in the equal rank case, while the other is the twisted Dirac index 
$I_\theta(X)=(X^+-X^-)\otimes S$ for a $(\frg,K^+)-$module $X$. In each case, $X$ is a finite length $(\fk g, K)-$module, 
and there is an involution $\gamma$ acting on $X\otimes S$, equal either to $1\otimes\theta$ or to $\theta\otimes 1$, and we have
\begin{equation}
  \label{eq:twcoh}
H_D^\pm(X)= r\sum m_\tau^\pm E_\tau 
\end{equation}
\begin{equation}
  \label{eq:index}
  I_\gamma(X)=r\big(\sum m_\tau^+ E_\tau-\sum m^-_\tau E_\tau\big)
\end{equation}
where $m^\pm_\tau$ are the multiplicities of $E_\tau$ in the $\pm 1
-$eigenspaces of $\gamma,$ and $r>0$ is an integer depending on 
$(\fk g, K)$ only. ($r$ is the multiplicity of the irreducible summands in the spin module.)
 On the other hand, 
 \begin{equation}
   \label{eq:character1}
 X=\sum m(X,\chi)X(\chi)  
 \end{equation}
where $X(\chi)$ are standard modules. Then
\begin{equation}
  \label{eq:indexchar}
  I_\gamma(X)=\sum m(X,\chi)I_\gamma(X(\chi)).
\end{equation}

Specialize $X$ to the cases of irreducible unipotent representations
in \cite{BP1}. Write $\bX:=\bX(\la,-x\la)$ for a unipotent
representation. One can verify in all cases that $\bX\big|_K$ decomposes with
multiplicity 1, and that $E_{2\la-\rho}$ occurs as the PRV component of
$E_\mu\otimes E_\rho.$ Thus one of $H_D^\pm(\bX)$ is zero, and $I_\gamma(\bX)=me(\bX)E_{2\la-\rho}$ with $e(\bX)=\pm
1.$  Combining this with  Theorem \ref{t:twcx} and Equation (\ref{eq:indexchar}), we conclude 
$$
e(\bX)=\eta m(\bX,(\la,-\la)),
$$ 
with $\eta$ as in Theorem \ref{t:twcx}. This can be used to solve for $m(\bX,(\la,-\la))$. 

\medskip
Similarly, specialize to the cases of irreducible unipotent
representations in \cite{BP2}. One can verify in all cases that one of
$m^\pm_\tau = 0$ (see below). Thus we can write \newline $I_\gamma(\bX)=\sum
e_\bX(\tau)m(\tau)E_\tau.$ Theorem \ref{tw_index_std_real} implies
$$
e_\bX(\tau_\chi)m(\tau_\chi)=m(\bX,\chi).
$$

In all cases,  (\ref{eq:char}) or (\ref{eq:twcharacter}) give explicit formulas for
the distribution characters restricted to the elliptic strongly
regular set, $\theta T'_0$ where $T_0$ is the $\theta-$fixed part of
the fundamental Cartan subgroup $H_0=T_0A_0.$ 

We now explain why in the examples of unipotent representations  in \cite{BP2}, one of $m^\pm_\tau$ has to be 0 for each $\tau$. 
Let $(\frg,K)$ be the pair corresponding to $G=Sp(2n,\bbR)$ or $G=U(p,q)$. Let 
$\frt\subset\frk\subset\frg$ be a compact Cartan subalgebra, and choose compatible
positive root systems $R^+(\frg,\frt)\supset R^+(\frk,\frt)$. Denote $W=W(\frg,\frt)$, $W_\frk=W(\frk,\frt)$ and 
let $W^1$ be the subset of $W$ consisting of $w$ which take the fundamental $(\frg,\frt)-$chamber
into the fundamental $(\frk,\frt)-$chamber.

Recall that for each representation $X$ that we studied, we considered the candidates for $K^\dagger-$types that can appear in $H_D(X)$:
\[
\tau=w\Lambda-\rho,
\]
where $\Lambda\in\frt^*$ is the infinitesimal character of $X$ and $w\in W^1$. The multiplicity of each such $\tau$ was calculated in \cite{BP2}; it is equal to the number of solutions to the equation
\[
w_1\tau= \sigma\rho-\rho_\frk +\mu^-,
\]
with variables $w_1\in W_\frk$, $\sigma\in W^1$, and $\mu^-$ the lowest weight of a $K-$type of $X$.

To compute the index $I(X)$, we must determine the multiplicity of
each $\tau$ in $H_D^+(X)$ and in $H_D^-(X)$. We claim that for each
fixed $\tau$ one of these multiplicities is 0. Equivalently, we claim
that all $\sigma$ corresponding to a fixed $\tau$ have the same
sign. We explain why this is so for   
$G=Sp(2n,\bbR)$; the reasoning for $U(p,q)$ is similar.

We identify $\sigma$ with $\sigma\rho$, and recall that each
$\sigma\rho$ corresponding to our fixed $\tau$ has a fixed ``core"
part plus possibly one or two more fixed coordinates. The variable
part of $\sigma\rho$ was 
composed of pairs of neighboring integers that could either be placed
on the left of the core, as $(i+1,i)$, or on the right of the core, as
$(-i,-i-1)$. Moreover, the number of negative pairs, which lie to the
right of the core, is fixed, \ie does not depend on $\sigma$ (or even
on $\tau$). We denote this number by $r$. (The number of positive
pairs is thus also fixed.) 

It is now easy to see that all $\sigma$ corresponding to the same
$\tau$ have equal parity; one can be obtained from another by
exchanging some pairs, 
and moving each pair requires an odd number of steps. One can moreover
compute that the sign attached to $\tau$ is $(-1)^r$ times the sign of
the fixed part of $\sigma$. 

In particular, we see that $I(X)$, and so also the character of $X$ on
the compact Cartan subgroup $T$ of $G$, can not be zero whenever $X$
has nonzero Dirac cohomology. So each such $X$ is elliptic, and using
the formulas for $H_D(X)$ in \cite{BP2} together with the above
remarks, we can calculate the character on $T$ explicitly. 

\section{Extensions of modules}
\label{s:ext}

Recall from the introduction that the Euler-Poincar\'e pairing of two (virtual) $(\frg,K)-$modules $X$ and $Y$ is the virtual
vector space 
\[
\EP(X,Y)=\sum_{i=1}^s (-1)^i\Ext^i_{(\frg,K)}(X,Y).
\]

As explained in the introduction, we should understand $\EP(X,Y)$ in case $X$ is a standard
module $A_\frb(\lambda)$ of Section \ref{std_real}. 
For this we use a special case of the ``Frobenius reciprocity spectral sequence" of \cite{greenbook}, Proposition 6.3.2 and Corollary 6.3.4.
(For the $\theta-$stable case, see also \cite{KV}, Corollary 5.121,
or \cite{ic2}, Theorem 6.14, where this spectral sequence is called
the Zuckerman spectral sequence.) 

\begin{proposition}
\label{zuckspseq} 
There is a first quadrant spectral sequence with differential of bidegree $(r,1-r)$, and $E_2$ term
\eq
\label{zuck_e2}
E_2^{pq}=\Ext^p_{(\frh,T)}(\bbC_\lambda^\#,H^q(\bar\fru,Y)),
\eeq
converging to 
\eq
\label{zuck_limit}
\Ext^{p+q-S}_{(\frg,K)}(A_\frb(\lambda),Y).
\eeq
\end{proposition}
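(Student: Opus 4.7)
The plan is to set up (\ref{zuck_e2})--(\ref{zuck_limit}) as a Grothendieck-style spectral sequence coming from a composition of derived functors, imitating the classical Zuckerman spectral sequence (see \cite{KV}, Corollary 5.121, or \cite{ic2}, Theorem 6.14) in the germane, not necessarily $\theta$-stable, setting. Since $A_\frb(\lambda) = \C L_S(\bbC_\lambda)$ and by \cite{KV}, Theorem 11.129(c) all other $\C L_q(\bbC_\lambda)$ vanish, the appearance of the shift $-S$ in (\ref{zuck_limit}) reflects that the cohomological induction of $\bbC_\lambda$ is concentrated in degree $S$.

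The central ingredient is a derived Frobenius reciprocity isomorphism
\begin{equation*}
R\Hom_{(\frg,K)}\bigl(\C L_{\bullet}(Z), Y\bigr) \;\cong\; R\Hom_{(\frh, T)}\bigl(Z^{\#}, H^{\bullet}(\bar\fru, Y)\bigr)[-S],
\end{equation*}
valid for an $(\frh, T)$-module $Z$ and a finite length $(\frg, K)$-module $Y$. This is the derived form of the adjunction between cohomological induction from $(\frh, T)$ (via $\frb$) up to $(\frg, K)$ and the composite restriction $Y \mapsto H^{\bullet}(\bar\fru, Y)$ down to $(\frh, T)$. The $\rho$-shift $\bbC_\lambda^{\#} = \bbC_\lambda \otimes \twedge^{\mathrm{top}}\fru$ is the usual Frobenius correction; in degrees $0$ and $S$ this specializes to the $\wht T$-equivariant isomorphism of \cite{KV}, Proposition 5.71, already used in the proof of Theorem \ref{tw_index_std_real}.

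Granted the isomorphism, I would construct the spectral sequence by applying the Grothendieck machinery to the composite of functors
\begin{equation*}
\Hom_{(\frh,T)}\bigl(\bbC_\lambda^{\#}, -\bigr) \;\circ\; \bigl(\,Y \mapsto H^{\bullet}(\bar\fru, Y)\,\bigr),
\end{equation*}
taking an injective resolution $Y \to I^{\bullet}$ in the category of $(\frg, K)$-modules. Computing $\bar\fru$-cohomology via the Koszul complex $\twedge^{\bullet}\bar\fru$ produces the $q$-direction of the $E_2$ page, and computing $(\frh, T)$-Ext against $\bbC_\lambda^{\#}$ produces the $p$-direction, yielding (\ref{zuck_e2}). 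The abutment is the total derived functor, which by the derived Frobenius reciprocity identifies with (\ref{zuck_limit}). First-quadrant convergence is automatic: $H^q(\bar\fru, Y)$ vanishes for $q > \dim \bar\fru$, and $\Ext^p_{(\frh, T)}$ vanishes for $p$ exceeding the homological dimension of the category.

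The main obstacle is establishing the derived Frobenius reciprocity when $\frb$ is germane but not $\theta$-stable. The $\theta$-stable case is classical (\cite{KV}, Chapter V), but here $\fru$ and $\bar\fru$ do not interact cleanly with $\theta$, and one must verify that the undecomposed $\bar\fru$-cohomology, rather than $\bar\fru\cap\bar\frk$-cohomology, is the correct object to appear. One strategy is to induce in stages through a $\theta$-stable parabolic containing $\frb$, reducing to the standard Zuckerman reciprocity at the top and a real-parabolic-induction Frobenius reciprocity at the bottom. An alternative, and perhaps cleanest, approach is to invoke \cite{greenbook}, Proposition 6.3.2 directly; it handles the germane case by working throughout with the Zuckerman functor $R\Gamma$ and the Bernstein functor on $(\frg, T)$-modules, after which one rewrites the resulting $\Ext^{\bullet}_{(\frb, T)}(\bbC_\lambda, Y)$ groups via the Koszul resolution $\twedge^{\bullet}\bar\fru \otimes Y \to Y$ to surface the $H^{\bullet}(\bar\fru, -)$ factor and the $(\frh, T)$-Ext factor.
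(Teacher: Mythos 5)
Your "cleanest approach" --- invoking \cite{greenbook}, Proposition 6.3.2 and Corollary 6.3.4 directly, since those results already cover the germane (not necessarily $\theta$-stable) case --- is precisely what the paper does: the proposition is stated there as a special case of Vogan's Frobenius reciprocity spectral sequence, with \cite{KV}, Corollary 5.121 and \cite{ic2}, Theorem 6.14 cited for the $\theta$-stable version, and no further proof is given. Your additional sketch of how one would reassemble the result from scratch (derived Frobenius reciprocity plus a Grothendieck spectral sequence for the composite $\Hom_{(\frh,T)}(\bbC_\lambda^\#,-)\circ H^\bullet(\bar\fru,-)$, with the shift by $S$ coming from the concentration of $\C L_q(\bbC_\lambda)$ in degree $S$ via \cite{KV}, Theorem 11.129(c)) is a reasonable elaboration of what lies beneath that citation, and you correctly flag the one genuinely nontrivial point: the adjunction in the non-$\theta$-stable setting.
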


Since $H^q(\bar\fru,Y)$ are finite-dimensional $(\frh,T)-$modules by \cite{HS}, Theorem 7.22, it makes sense to take the alternating sum of spaces in (\ref{zuck_e2}). Since (\ref{zuck_limit}) is obtained from (\ref{zuck_e2}) by successively taking cohomology with respect to finitely many differentials, we can use the Euler-Poincar\'e principle to obtain
\[
\sum_{p,q} (-1)^{p+q}\Ext^p_{(\frh,T)}(\bbC_\lambda^\#,H^q(\bar\fru,Y))= (-1)^S \sum_i (-1)^i\Ext^i_{(\frg,K)}(A_\frb(\lambda),Y).
\]
In other words,
\eq
\label{EP_spseq}
\EP(A_\frb(\lambda),Y)=(-1)^S\sum_{p,q} (-1)^{p+q}\Ext^p_{(\frh,T)}(\bbC_\lambda^\#,H^q(\bar\fru,Y)).
\eeq

\subsection{EP pairing and Dirac index in the equal rank case} 

The main result of this subsection is

\begin{theorem} 
\label{thm_altsum}
Let $X$ and $Y$ be two finite length $(\frg,K)-$modules with infinitesimal character. Then
\eq
\label{EP_eqrk}
\EP(X,Y) = \Hom_{K^\dagger}(I(X),I(Y))
\eeq
in the Grothendieck group of finite-dimensional vector spaces.
\end{theorem}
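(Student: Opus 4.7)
The plan is a two-step reduction: first, pass from arbitrary $X$ to a standard module $X = A_\frb(\lambda)$ by additivity; second, apply the spectral sequence of Proposition \ref{zuckspseq} and compare with the known Dirac index computed in Theorem \ref{index_std_real}.

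Both sides of (\ref{EP_eqrk}) are bi-additive on short exact sequences, so they descend to bilinear pairings on the Grothendieck group of finite length $(\frg,K)$-modules with infinitesimal character. Since the standard modules $A_\frb(\lambda)$ of Section \ref{std_real} span this Grothendieck group by the Langlands classification, it suffices to prove (\ref{EP_eqrk}) for $X = A_\frb(\lambda)$.

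For such $X$, invoke the spectral sequence of Proposition \ref{zuckspseq} and apply the Euler--Poincar\'e principle to obtain (\ref{EP_spseq}):
\begin{equation*}
\EP(A_\frb(\lambda), Y) = (-1)^S \sum_{p,q} (-1)^{p+q}\, \Ext^p_{(\frh,T)}\bigl(\bbC_\lambda^\#,\, H^q(\bar\fru, Y)\bigr),
\end{equation*}
with each $H^q(\bar\fru,Y)$ a finite-dimensional $(\frh,T)$-module. These Ext groups are computed by the standard complex $\Hom_T\bigl(\twedge^\bullet \fra^* \otimes \bbC_\lambda^\#, \cdot\bigr)$. If $\frb$ is not $\theta$-stable, then $\fra \neq 0$, and since $T$ acts trivially on $\fra$, one has $\sum_p (-1)^p \twedge^p \fra^* = 0$ in the Grothendieck group of $T$-modules. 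The alternating sum therefore vanishes, giving $\EP(A_\frb(\lambda),Y) = 0$; this matches $\Hom_{\Kt}(I(A_\frb(\lambda)),I(Y)) = 0$ coming from $I(A_\frb(\lambda)) = 0$ in Theorem \ref{index_std_real}.

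If $\frb$ is $\theta$-stable, then $\frh = \frt$, $\fra = 0$, only $p=0$ contributes, and by Theorem \ref{index_std_real} we have $I(A_\frb(\lambda)) = E_{\lambda + \rho(\fru\cap\frs)}$, so the theorem reduces to the identity
\begin{equation*}
(-1)^S \sum_q (-1)^q \Hom_T\bigl(\bbC_\lambda^\#, H^q(\bar\fru, Y)\bigr) = \Hom_{\Kt}\bigl(E_{\lambda + \rho(\fru\cap\frs)}, I(Y)\bigr).
\end{equation*}
I would establish this via a character computation on $T$: the Koszul complex gives $\ch\bigl(\sum_q (-1)^q H^q(\bar\fru,Y)\bigr) = \ch(Y)\cdot\prod_{\alpha\in\Delta(\fru)}(1-e^\alpha)$, which factors into the $\frk$-part $\prod_{\alpha\in\Delta(\fru\cap\frk)}(1-e^\alpha)$ (accounting, up to sign and shift by $\rho_\frk$, for the Weyl denominator of $\Kt$) and the $\frs$-part $\prod_{\alpha\in\Delta(\fru\cap\frs)}(1-e^\alpha)$, which matches $\ch(S^+ - S^-)$ up to the shift $e^{-\rho(\fru\cap\frs)}$. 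Combining this with the formula (\ref{index formula}) for $I(Y)$ and the Weyl character formula applied to $E_{\lambda+\rho(\fru\cap\frs)}$ converts the $T$-weight multiplicity on the left into the $\Kt$-type multiplicity on the right.

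The main obstacle is the final character matching in the $\theta$-stable case: one must carefully track the sign $(-1)^S$ (with $S = \dim(\fru\cap\frk)$) against the sign $(-1)^{\dim(\fru\cap\frk)}$ generated by the Weyl denominator for $\frk$, and convert $T$-multiplicities from the $\bar\fru$-cohomology side into $\Kt$-multiplicities on the Dirac-index side while respecting the spin cover and the shift by $\rho(\fru\cap\frs)$. Conceptually this matching is a virtual-module refinement of the $\bar\fru$-cohomology/Dirac-cohomology identification underlying the proof of Vogan's conjecture (Theorem \ref{HPmain}), applied here to alternating sums where cancellations are allowed; the non-$\theta$-stable case is handled uniformly by the Euler-characteristic vanishing $\sum_p(-1)^p\twedge^p\fra^* = 0$, which mirrors the vanishing of $I(A_\frb(\lambda))$ itself.
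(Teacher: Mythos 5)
Your proposal follows the paper's proof essentially step for step: the reduction to standard modules, the use of Proposition \ref{zuckspseq} and (\ref{EP_spseq}), the vanishing via $\sum_p(-1)^p\twedge^p\fra=0$ when $\frb$ is not $\theta$-stable (the paper's Lemma \ref{htext}), and the Koszul/Kostant identity $\sum_q(-1)^qH^q(\bar\fru,Y)=Y\otimes\sum_i(-1)^i\twedge^i\fru$ in the $\theta$-stable case. The final character-matching you sketch is exactly the content of the paper's Lemma \ref{alt_coho} (factoring $\twedge\fru$ into $\frk$- and $\frs$-parts to produce $I_\frk(I(Y))\otimes\bbC_{\rho(\fru)}$) and Lemma \ref{homK_homT} (converting $\Tt$-multiplicities to $\Kt$-multiplicities via Kostant's Dirac cohomology of $E_\nu$, equivalent to your Weyl-character-formula argument), so the approach is the same.
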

\pf
As we already remarked, $\EP(X,Y)=0$ unless $X$ and $Y$ have the same infinitesimal character, 
and by Vogan's conjecture the same is true for the right side. Therefore we may assume that $X$ and $Y$ have the same infinitesimal character.
Furthermore, it is enough to prove the theorem in case $X$ is a standard module, since standard modules generate the Grothendieck group of finite length $(\frg,K)-$modules. 

So let $X=A_\frb(\lambda)$ as in Section \ref{std_real}. We first consider the case when $\frb$ is not $\theta-$stable, \ie  $\frh$ is not contained in $\frk$, \ie  
$\fra\neq 0$. We claim that in this case both sides of (\ref{EP_eqrk}) are equal to zero.

In view of (\ref{EP_spseq}), $\EP(A_\frb(\lambda),Y)=0$ follows from the following lemma, \ie  its obvious extension to virtual $(\frh,T)-$modules.
\begin{lemma}
\label{htext}
Let $V$ and $W$ be finite-dimensional $(\frh,T)-$modules, where $\frh=\frt\oplus\fra$ is a $\theta-$stable Cartan subalgebra of $\frg$ such that $a=\dim\fra > 0$. Then
\[
\sum_{p=0}^a (-1)^p\Ext^p_{(\frh,T)}(V,W)=0.
\]
\end{lemma}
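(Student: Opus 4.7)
The plan is to apply the Euler-Poincar\'e principle to the standard relative Chevalley-Eilenberg complex computing $\Ext^*_{(\frh,T)}$, in direct analogy with the complex $\Hom_K(\bigwedge^\cdot \frs \otimes X, Y)$ used for $(\frg,K)$-modules in (\ref{basiceuler}). For the pair $(\frh,T)$ with $\frh/\frt\cong\fra$, the relevant complex is
\[
C^p = \Hom_T\bigl(\textstyle{\bigwedge}^p \fra,\, \Hom_\bC(V,W)\bigr),\qquad 0\le p\le a,
\]
whose cohomology is $\Ext^p_{(\frh,T)}(V,W)$. Since the Grothendieck group of finite-dimensional vector spaces is $\bbZ$ via dimension, it suffices to check that $\sum_p(-1)^p\dim C^p=0$.

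The key input I would exploit is that $T$ acts trivially on $\fra$. This holds because the Cartan subgroup $H=TA$ is abelian (we are in the Harish-Chandra class throughout the paper), so $T$ acts trivially on all of $\frh$, and in particular on $\fra$. Consequently $\bigwedge^p \fra$ is a trivial $T$-module of dimension $\binom{a}{p}$, so $\dim C^p=\binom{a}{p}\dim\Hom_T(V,W)$ and
\[
\sum_{p=0}^{a}(-1)^p\dim C^p=\Bigl(\sum_{p=0}^{a}(-1)^p\tbinom{a}{p}\Bigr)\dim\Hom_T(V,W)=(1-1)^a\,\dim\Hom_T(V,W)=0,
\]
since $a>0$. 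Extending linearly to virtual $(\frh,T)$-modules then gives the full statement.

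There is no real obstacle; the argument is a one-line Euler-Poincar\'e computation once the correct complex is set up. Conceptually the vanishing crystallizes the distinction flagged in the remark preceding Theorem \ref{thm_altsum}: in the equal rank computation (\ref{basiceuler}) the virtual $K$-module $\sum(-1)^p\bigwedge^p\frs$ equals the nonzero expression $(S^+-S^-)\otimes(S^+-S^-)^*$, whereas here the analogue $\sum(-1)^p\bigwedge^p\fra$ is zero as a virtual $T$-module precisely because $T$ acts trivially on $\fra$.
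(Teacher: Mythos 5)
Your proof is correct and follows essentially the same route as the paper's: both identify $\Ext^p_{(\frh,T)}(V,W)$ as the cohomology of the finite-dimensional relative Chevalley--Eilenberg complex, use the triviality of the $T$-action on $\fra$ to pull $\bigwedge^\cdot\fra$ out as a trivial factor, and apply the Euler--Poincar\'e principle to reduce to the vanishing of $\sum_p(-1)^p\binom{a}{p}=(1-1)^a$ for $a>0$.
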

\begin{proof} The space $\Ext^p_{(\frh,T)}(V,W)$ is the $p$th cohomology of the complex
\[
\Hom_T(\twedge^\cdot\fra\otimes V,W)=\twedge^\cdot\fra\otimes \Hom_T(V,W),
\]
with the usual de Rham differential. Here we used the fact that the adjoint action of $T$ on $\fra$ is trivial, so $\twedge^\cdot\fra$ can be pulled out of the Hom space. Since the complex is finite-dimensional, we can use the Euler-Poincar\'e principle and conclude that
\[
\sum_p (-1)^p\Ext^p_{(\frh,T)}(V,W)=(\twedge^\text{even}\fra-\twedge^\text{odd}\fra)\otimes \Hom_T(V,W).
\]
The last expression is however zero since $\twedge^\text{even}\fra$ and $\twedge^\text{odd}\fra$ have the same dimension.
\end{proof}

The right side of (\ref{EP_eqrk}) is zero as well, by Theorem \ref{index_std_real}. 

This leaves us with the case of $\theta-$stable $\frb$.
In this case, since $M(\frh,T)=M(T)$ is a semisimple category, the higher Ext groups in this category vanish, and so (\ref{EP_spseq}) becomes
\begin{multline}
\label{alt_ext_coho}
\EP(A_\frb(\lambda),Y)=(-1)^S\sum_q (-1)^q\Hom_T(\bbC_\lambda^\#,H^q(\bar\fru,Y))=\\
(-1)^S\Hom_T(\bbC_\lambda^\#,\sum_q (-1)^q H^q(\bar\fru,Y));
\end{multline}
the alternating sum can go inside $\Hom$, because all $H^q(\bar\fru,Y)$ are finite-dimensional. Note that the above arguments actually imply that the spectral sequence given by (\ref{zuck_e2}) and (\ref{zuck_limit}) collapses, and hence we in fact get
\eq
\label{explicit ext}
\Ext^{q-S}_{(\frg,K)}(A_\frb(\lambda),Y)= \Hom_T(\bbC_\lambda^\#,H^q(\bar\fru,Y))
\eeq
for every $q$. We are however interested primarily in the alternating sum, and not in the individual $\Ext$ groups.

To compare the right sides of (\ref{alt_ext_coho}) and (\ref{EP_eqrk}), we need the following two lemmas.
We denote by $D(\frk,\frt)$ the cubic Dirac operator for the pair $(\frk,\frt)$ \cite{G}, \cite{Ko2}, and by $I_\frk(E)$ the corresponding Dirac index of a $\frk-$module 
$E$. The spin module we take to define $I_\frk(E)$ is $S_\frk=\twedge(\fru\cap\frk)$. As usual, the difference between the adjoint and the spin action of $\frt$ on $\twedge(\fru\cap\frk)$ is given by a shift by $\rho(\fru\cap\frk)$. 

\begin{lemma}
\label{alt_coho}
For any admissible $(\frg,K)-$module $Y$ there is an equality of virtual $T-$modules
\[
\sum_q (-1)^q H^q(\bar\fru,Y)=I_\frk(I(Y))\otimes\bbC_{\rho(\fru)}.
\]
\end{lemma}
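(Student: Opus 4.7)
The plan is to compute both sides as virtual $\wti T$-modules and match them via careful bookkeeping of $\rho$-shifts. First I would compute the $\bar\fru$-cohomology via the standard Koszul resolution: $H^q(\bar\fru, Y)$ is the $q$-th cohomology of the complex $\twedge^q \bar\fru^* \otimes Y$, which, after using the Killing form to identify $\bar\fru^* \cong \fru$ as $T$-modules, is isomorphic to $\twedge^q \fru \otimes Y$ as graded $T$-modules. Although this complex is infinite-dimensional, admissibility of $Y$ makes each $T$-weight space of it finite-dimensional, and by Hecht--Schmid each $H^q(\bar\fru,Y)$ is finite-dimensional; hence the Euler--Poincar\'e principle applies $T$-weight by $T$-weight, yielding
\[
\sum_q (-1)^q H^q(\bar\fru,Y) \;=\; \Big(\sum_q (-1)^q \twedge^q \fru\Big) \otimes Y
\]
as virtual $T$-modules.

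Next, I would use the $\theta$-stable decomposition $\fru = (\fru\cap\frk)\oplus(\fru\cap\frs)$ and the multiplicativity of $\twedge$ to factor
\[
\sum_q (-1)^q \twedge^q \fru \;=\; \Bigl[\twedge^{\mathrm{even}}(\fru\cap\frk)-\twedge^{\mathrm{odd}}(\fru\cap\frk)\Bigr] \otimes \Bigl[\twedge^{\mathrm{even}}(\fru\cap\frs)-\twedge^{\mathrm{odd}}(\fru\cap\frs)\Bigr],
\]
with both factors carrying the natural $T$-action from the exterior algebra. These are precisely $S_\frk^+ - S_\frk^-$ and $S^+ - S^-$, up to the shift between the natural and the spin $\frt$-actions: the spin action on $S_\frk = \twedge(\fru\cap\frk)$ (resp.\ on $S = \twedge(\fru\cap\frs)$) differs from the natural action by twisting with $\bbC_{-\rho(\fru\cap\frk)}$ (resp.\ $\bbC_{-\rho(\fru\cap\frs)}$). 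Using $\rho(\fru) = \rho(\fru\cap\frk) + \rho(\fru\cap\frs)$, the left-hand side becomes
\[
(S_\frk^+-S_\frk^-)\otimes(S^+-S^-)\otimes Y \otimes \bbC_{\rho(\fru)}.
\]

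Finally, applying Proposition \ref{general index} to the pair $(\frk,T)$ gives $I_\frk(E) = E\otimes(S_\frk^+-S_\frk^-)$ for any virtual $\frk$-module $E$; combined with $I(Y) = Y\otimes(S^+-S^-)$ from (\ref{index formula}), this yields
\[
I_\frk(I(Y))\otimes\bbC_{\rho(\fru)} \;=\; Y\otimes(S^+-S^-)\otimes(S_\frk^+-S_\frk^-)\otimes\bbC_{\rho(\fru)},
\]
matching the previous display and proving the lemma. The main technical obstacle is the justification of the Euler--Poincar\'e step on the infinite-dimensional Koszul complex; this is handled by admissibility and passage to individual $T$-weight spaces, where everything becomes finite-dimensional. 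The remaining work is careful $\rho$-bookkeeping, and the factor $\bbC_{\rho(\fru)}$ on the right-hand side of the lemma is precisely what absorbs the shifts between natural (adjoint) and spin $\frt$-actions on the two exterior algebras.
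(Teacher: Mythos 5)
Your $\rho$-shift bookkeeping in the second half is exactly the computation the paper performs (run in the opposite direction, starting from the right-hand side): the factorization
$\sum_q(-1)^q\twedge^q\fru=(\twedge^{\text{even}}\fru\cap\frs-\twedge^{\text{odd}}\fru\cap\frs)\otimes(\twedge^{\text{even}}\fru\cap\frk-\twedge^{\text{odd}}\fru\cap\frk)$
and the identification with $(S^+-S^-)\otimes(S_\frk^+-S_\frk^-)\otimes\bbC_{\rho(\fru)}$ after accounting for the shifts between adjoint and spin actions are correct. The gap is in your first step, the identity
$\sum_q(-1)^qH^q(\bar\fru,Y)=Y\otimes\sum_q(-1)^q\twedge^q\fru$ as virtual $T$-modules. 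You justify it by applying the Euler--Poincar\'e principle to the Koszul complex ``$T$-weight by $T$-weight,'' on the premise that admissibility of $Y$ makes each $T$-weight space finite-dimensional. That premise is false whenever $K$ is nonabelian: admissibility bounds the multiplicity of each $K$-type, but a fixed $\frt$-weight occurs in infinitely many $K$-types (for $K=U(2)$, the weight $(m,n)$ occurs in every $V_{(m+j,\,n-j)}$, $j\ge 0$), so the $T$-weight spaces of $Y$, and hence of the complex $\twedge^\cdot\fru\otimes Y$, are infinite-dimensional in general. The Euler--Poincar\'e principle therefore cannot be applied termwise, and the right-hand side $Y\otimes(\twedge^{\text{even}}\fru-\twedge^{\text{odd}}\fru)$ is a priori only a formal expression with massive cancellation.

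The assertion that this formal expression nevertheless equals the finite virtual $T$-module $\sum_q(-1)^qH^q(\bar\fru,Y)$ (even the finite-dimensionality of each $H^q(\bar\fru,Y)$ is a theorem) is precisely the nonformal content of the lemma. The paper handles it by citation: it is a special case of \cite{ic2}, Theorem 8.1, and of \cite{HS}, Theorem 7.22 (stated there for $\bar\fru$-homology; one passes to cohomology by Poincar\'e duality). So your proof would be complete if you replaced the weight-by-weight Euler--Poincar\'e argument by an appeal to one of these results; proving the identity from scratch is a substantial undertaking tied to the character theory of Harish-Chandra modules on the compact Cartan subgroup, not a consequence of the Koszul resolution alone.
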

\begin{proof}
Writing out the right side we get 
\begin{multline*}
I_\frk(I(Y))\otimes\bbC_{\rho(\fru)}=I(Y)\otimes (S_\frk^+-S_\frk^-)\otimes\bbC_{\rho(\fru)}=\\ 
Y\otimes (S^+-S^-)\otimes (S_\frk^+-S_\frk^-)\otimes\bbC_{\rho(\fru)}=\\
Y\otimes (\twedge^{\text{even}}\fru\cap\frs-\twedge^{\text{odd}}\fru\cap\frs)\otimes (\twedge^{\text{even}}\fru\cap\frk-\twedge^{\text{odd}}\fru\cap\frk)=\\
Y\otimes (\twedge^{\text{even}}\fru-\twedge^{\text{odd}}\fru).
\end{multline*}
So our lemma is a special case of \cite{ic2}, Theorem 8.1. It is also a special case of \cite{HS}, Theorem 7.22 (which is stated for homology and not cohomology, but it is easy to pass between homology and cohomology using Poincar\'e duality.) 
\end{proof}

\begin{lemma}
\label{homK_homT}
Let $E_\mu$ be the irreducible finite-dimensional $\frk-$module with highest weight $\mu\in\frt^*$. Let $E$ be any finite-dimensional $\frk-$module. Then
\eq
\label{k-mult}
\Hom_\frk(E_\mu,E)=\Hom_\frt(\bbC_{\mu+\rho(\fru\cap\frk)},(-1)^S I_\frk(E)).
\eeq
\end{lemma}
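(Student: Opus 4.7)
The proof is essentially a character computation, and I would organize it around reducing the right-hand side to a Weyl character/denominator identity. Since the category of finite-dimensional $\frk$-modules is semisimple, the left-hand side is just the multiplicity $[E:E_\mu]$, which is also the coefficient of $e^{\mu+\rho_\frk}$ in $\text{ch}(E)\cdot D_\frk$, where $D_\frk=\prod_{\alpha\in\Delta(\fru\cap\frk)}(e^{\alpha/2}-e^{-\alpha/2})$ is the Weyl denominator for $(\frk,\frt)$. This is immediate from writing $\text{ch}(E)=\sum_\lambda [E:E_\lambda]\,\text{ch}(E_\lambda)$ and applying the Weyl character formula $\text{ch}(E_\lambda)\cdot D_\frk=\sum_{w\in W_\frk}\epsilon(w)e^{w(\lambda+\rho_\frk)}$, together with the fact that distinct dominant $\lambda+\rho_\frk$ have disjoint $W_\frk$-orbits.

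It therefore suffices to identify the $\frt$-character of $(-1)^S I_\frk(E)$ with $\text{ch}(E)\cdot D_\frk$, since the right-hand side of the lemma is by definition the multiplicity of the weight $\mu+\rho_\frk$ in $(-1)^S I_\frk(E)$. By definition $I_\frk(E)=E\otimes(S_\frk^+-S_\frk^-)$, where $\frt$ acts diagonally: adjointly on $E$ and via the spin representation $\frt\hookrightarrow C(\frk/\frt)$ on $S_\frk=\twedge(\fru\cap\frk)$. The key computation is that the spin action differs from the obvious action on $\twedge(\fru\cap\frk)$ by a shift of $-\rho_\frk$, so
\[
\text{ch}(S_\frk^+-S_\frk^-)=e^{-\rho_\frk}\prod_{\alpha\in\Delta(\fru\cap\frk)}(1-e^\alpha)=(-1)^S\prod_{\alpha\in\Delta(\fru\cap\frk)}(e^{\alpha/2}-e^{-\alpha/2})=(-1)^S D_\frk.
\]
Multiplying by $(-1)^S$ gives $(-1)^S\text{ch}(I_\frk(E))=\text{ch}(E)\cdot D_\frk$, as required.

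Combining these two steps, the coefficient of $e^{\mu+\rho_\frk}$ in $(-1)^S\text{ch}(I_\frk(E))$ equals $[E:E_\mu]=\dim\Hom_\frk(E_\mu,E)$, proving \eqref{k-mult}.

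The only real subtlety, and where care is needed, is the comparison between the spin action of $\frt$ on $S_\frk$ and its action by $\operatorname{ad}$ on $\twedge(\fru\cap\frk)$: the $\rho_\frk$ shift, together with the $(-1)^S$ arising from $\prod(1-e^\alpha)$ versus $\prod(e^{\alpha/2}-e^{-\alpha/2})$, is precisely what accounts for the weight shift $\mu\mapsto\mu+\rho_\frk$ and the sign $(-1)^S$ appearing in the statement. Once these normalizations are fixed, the lemma reduces cleanly to the Weyl denominator identity.
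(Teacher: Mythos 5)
Your proof is correct, but it takes a different route from the paper's. The paper's proof invokes Kostant's result on the Dirac cohomology of $E_\nu$ with respect to the cubic Dirac operator $D(\frk,\frt)$: $H_D(E_\nu)=\bigoplus_{w\in W_\frk}\bbC_{w(\nu+\rho(\fru\cap\frk))}$, from which one reads off $I_\frk(E_\nu)=\bigoplus_{w\in W_\frk}(-1)^{S+l(w)}\bbC_{w(\nu+\rho(\fru\cap\frk))}$ after a sign check on the top Cartan component; the unique dominant weight in this expression is $(-1)^S\bbC_{\mu+\rho(\fru\cap\frk)}$ when $\nu=\mu$, which gives the lemma directly. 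You instead unwind the identity $I_\frk(E)=E\otimes(S^+_\frk-S^-_\frk)$ into an explicit character computation, showing $\ch(S^+_\frk-S^-_\frk)=(-1)^S D_\frk$, and then extract the multiplicity of $e^{\mu+\rho_\frk}$ via the Weyl character formula for $\frk$. The two arguments are morally the same — Kostant's theorem is essentially the formal consequence of the Weyl denominator identity that you carry out by hand — but yours is more elementary and self-contained (no appeal to \cite{Ko2}), while the paper's version makes the Dirac-cohomological content explicit, which fits better with the surrounding framework in which $I_\frk$ is genuinely the index of the cubic Dirac operator. Both are complete; your sign bookkeeping ($(-1)^S$ from $\prod(1-e^\alpha)=(-1)^S e^{\rho_\frk}D_\frk$, and the $-\rho_\frk$ shift between the spin and adjoint actions on $\twedge(\fru\cap\frk)$) is handled correctly.
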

\pf
Let $\nu$ be a dominant $\frk-$weight.  
By \cite{Ko2}, Dirac cohomology of $E_\nu$ with respect to $D(\frk,\frt)$ is the $\frt-$module
\[
H_D(E_\nu)=\bigoplus_{w\in W_\frk} \bbC_{w(\nu+\rho(\fru\cap\frk))}.
\]
It follows that 
\[
I_\frk(E_\nu)=\bigoplus_{w\in W_\frk} (-1)^{S+l(w)}\bbC_{w(\nu+\rho(\fru\cap\frk))};
\]
to check the signs, note that the highest Cartan component of the tensor product $E_\nu\otimes S_\frk$ corresponds to $w=1$ and to $\twedge^{\text{top}}\fru\cap\frk$.

The only dominant weight in the above expression for $I_\frk(E_\nu)$ corresponds to $w=1$, and is equal to $(-1)^S \bbC_{\mu+\rho(\fru\cap\frk)}$. 
Since $E=\bigoplus_\nu m_\nu E_\nu$, we can express the multiplicity $m_\mu$ as the multiplicity of the $\frt-$module 
$(-1)^S \bbC_{\mu+\rho(\fru\cap\frk)}$  in $I_\frk(E)=\bigoplus_\nu m_\nu I_\frk(E_\nu)$. This implies the lemma.
\epf

\begin{remark}
\label{singular}
Lemma \ref{homK_homT} includes the case when $\mu+\rho(\fru\cap\frk)$ is singular, in which case $E_\mu=0$.
\end{remark}

We now combine (\ref{alt_ext_coho}) with Lemma \ref{alt_coho}, remembering that $\bbC_{\lambda}^{\#}=\bbC_{\lambda+2\rho(\fru)}$. It follows
\[
\EP(A_\frb(\lambda),Y)=\Hom_{T^\dagger}(\bbC_{\lambda+\rho(\fru)},I_\frk(I(Y)),
\]
where $T^\dagger$ is the spin double cover of $T$. 
Using Lemma \ref{homK_homT}, we see that this is further equal to $\Hom_{K^\dagger}(E_{\lambda+\rho(\fru\cap\frs)},I(Y))$. The claim now follows from
Theorem \ref{index_std_real}. This finishes the proof of Theorem \ref{thm_altsum}.
\epf

\subsection{Elliptic pairing, equal rank case}
{
As mentioned in the introduction, Theorem \ref{thm_altsum}
is also related to the elliptic pairing of (finite length) $(\frg,K)-$modules. The elliptic pairing was defined by Arthur \cite{A} (see also \cite{H} and \cite{R}). 
{
We assume for simplicity that $G$ and $K$ are connected, and that $G$ contains a Cartan subgroup equal to a 
compact maximal torus $T\subseteq K$ (consisting of elliptic elements). 
}
Let $X,Y$ be irreducible 
$(\frg,K)-$modules, and let $\Theta_X$, $\Theta_Y$ denote
the characters of the corresponding group representations, viewed as functions on $G$. The elliptic pairing of $X$ and $Y$ is
\eq
\label{def ell}
\langle X,Y\rangle_{\ell} =\frac{1}{|W(G,T)|}\int_T |\det[(1-\Ad(t))\big|_{\frg/\frt}]| \Theta_X(t)\overline{\Theta_Y(t)} dt,
\eeq
where $W(G,T)$ denotes the Weyl group of $G$ with respect to $T$ and $dt$ denotes the normalized Haar measure on $T$.

The elliptic pairing is extended linearly to the Grothendieck group of finite length $(\frg,K)-$modules; as in Definition \ref{def virtual}, we call the elements of this Grothendieck group virtual $(\frg,K)-$modules.
By \cite{H} and \cite{R}, the elliptic pairing of two virtual $(\frg,K)-$modules is equal to their ``Dirac index pairing", \ie, to
\[
\langle X,Y\rangle_{\DI}=\int_{K^\dagger}\ch(I(X))(k)\overline{\ch(I(Y))(k)}dk=\Hom_{K^\dagger}(I(X),I(Y)).
\]
(Here we identified the virtual vector space $\Hom_{K^\dagger}(I(X),I(Y))$ with its dimension; see Definition \ref{def virtual}. The second equality then follows from the standard orthogonality relations for $K^\dagger$.)

So we conclude
\begin{corollary}
\label{cor:ell} 
For any two finite length $(\frg,K)-$modules $X$ and $Y$,
\[
\langle X,Y\rangle_{\ell}=\EP(X,Y)= \Hom_{K^\dagger}(I(X),I(Y)).
\]
\end{corollary} 

We note that the equality 
\eq
\label{ell DI}
\langle X,Y\rangle_{\ell}=\langle X,Y\rangle_{\DI}
\eeq 
can also be derived from \eqref{eq:character}. Namely, by Weyl integral formula we can write
\[
\langle X,Y\rangle_{\DI}=\frac{1}{|W(K,T)|}\int_{T^\dagger} \det[(1-\Ad(t))\big|_{\frk/\frt}]\ch(I(X))(t)\overline{\ch(I(Y))(t)}dt.
\]
On the other hand we can substitute \eqref{eq:character} into \eqref{def ell}. Since $W(G,T)=W(K,T)$, and since  
$\frg/\frt=\frk/\frt\oplus \frs$ implies
\[
\det[(1-\Ad(t))\big|_{\frg/\frt}]=\det[(1-\Ad(t))\big|_{\frk/\frt}]\det[(1-\Ad(t))\big|_{\frs}],
\] 
the equality \eqref{ell DI} will follow once we check
\[
\det[(1-\Ad(t))\big|_{\frs}]=|\ch(S^+-S^-)(t)|^2.
\]
This follows from 
\[
\twedge^{\text{even}}\frs-\twedge^{\text{odd}}\frs=(S^+-S^-)\otimes(S^+-S^-)^*,
\]
and from the following simple lemma applied to $A=-\Ad(t)$.

\begin{lemma}
\label{det tr}
Let $V$ be an $n-$dimensional complex vector space and let $A:V\to V$ be a diagonalizable linear operator. Let $\twedge^i(A):\twedge^i(V)\to\twedge^i(V)$ be defined by
\[
\twedge^i(A)(v_1\wedge\dots\wedge v_i)=Av_1\wedge\dots\wedge Av_i.
\]
Then 
\[
\det(1+A)=\sum_{i=0}^n \tr(\twedge^i(A)).
\]
\end{lemma}
\pf If $\lambda_1,\dots,\lambda_n$ are the eigenvalues of $A$, then the equality becomes
\[
\prod_{j=1}^n(1+\lambda_j)=\sum_{i=0}^n\ \sum_{1\leq j_1<\dots<j_i\leq n}\lambda_{j_1}\dots\lambda_{j_i},
\]
which is obviously true.
\epf
}

\subsection{The general case} \label{gencase}

We now drop the equal rank assumption. Let $X$ and $Y$ be modules for
the extended group $G^+=G\rtimes \{1,\theta\}$, and consider the twisted
Euler-Poincar\'e pairing $\EP_\theta(X,Y)$ defined in the introduction.

The main result of this subsection is 

\begin{theorem} 
\label{thm_altsum_theta}
Let $X$ and $Y$ be two finite length modules for $G^+$ (\ie
$(\frg,K^+)-$modules), with infinitesimal character. Then 
\eq
\label{EP_gen}
\EP_\theta(X,Y) = c\Hom_{K^\dagger}(I_\theta(X),I_\theta(Y))
\eeq
in the Grothendieck group of finite-dimensional vector spaces. The constant $c$ is as in the discussion following
  definition \ref{def:c}, particularly formula (\ref{eq:twistedext}).
\end{theorem}

The idea is as before, to prove the theorem in case $X$ is a 
standard module, when we can express $\EP_\theta(X,Y)$ by
(\ref{EP_spseq}).

We treat separately each of the three cases from Section \ref{std_real}. 

\noindent{\bf Case 1.}  Now $\theta\frb=\frb$ and $\theta\lambda=\lambda$,
and we have two standard $(\frg,K^+)$ modules $A_\frb^\ep(\lambda)$,
$\ep=\pm 1$, with $\theta-$action given by $\ep$ on the lowest $K-$type.

Since $\fru$ and $\bar\fru$ are $\theta-$stable, $\theta$ acts on the
complex $\Hom_\bbC(\twedge\bar\fru,Y)=\twedge\fru\otimes Y$ used to
compute the $\bar\fru-$cohomology of $Y$, by acting on both
$\twedge\fru$ and $Y$. This action commutes with the differential and
therefore descends to cohomology. 
Thus the right side of the identity (\ref{EP_spseq}) has a natural action of $\theta$.
As defined at the beginning of Section \ref{gencase}, so does the left side, and we claim that the two actions are compatible,
\ie  (\ref{EP_spseq}) is an equality of virtual $\{1,\theta\}-$modules. 
To see this, we invoke the Fundamental Spectral Sequence in
Section 8 Chapter V of \cite{KV}. Let $X,Y$ be $(\fk g,\wh
K)-$modules. The action of $\theta$ on $\Ext_{(\fk g, K)}^i(X,Y)$ is defined via
the complex $\Hom_K[\bigwedge^i\frs\otimes X ,Y].$ Standard
cohomological properties of $\Ext$ imply that we get the same action
on $\Ext_{(\fk g, K)}^i(X,Y)$ by using a complex $\Hom_{(\fk g, K)}[P^i,Y]$ with
$P^i\to X\to 0$ any projective $(\fk g,\wh
K)-$resolution. Using this definition of the action of $\theta$ we can
trace the action of $\theta$ through the spectral sequence in
Proposition 5.113 and the particular  cases in Theorem 5.120 of
\cite{KV}. The key fact needed is that 
the isomorphism in Proposition 2.34 of \cite{KV}, in our
notation 
$$
\Hom_{\fk g, K}[P_{\bar{\fk b},T}^{\fk g,K}(\bC_\la^\#),Y]\cong 
\Hom_{\bar{\fk  b}, T}[\bC_\la^\#,\mathcal{F}^\vee Y],
$$
is compatible with the $\theta-$action. This is straightforward.  

So taking the trace of $\theta$ on both sides of  (\ref{EP_spseq}), we get
\eq
\label{twisted_EP_spseq}
\Tr[\theta,\EP_\theta(A_\frb^\ep(\lambda),Y)]=(-1)^S\sum_p (-1)^p
\Tr[\theta,\Ext^p_{(\frh,T)}(\bbC_\lambda^\#,\sum_q (-1)^q
H^q(\bar\fru,Y))]. 
\eeq
Denoting the finite-dimensional virtual $(\frh,T)-$module $\sum_q
(-1)^q H^q(\bar\fru,Y)$ by $Z$, we can use the $(\frh,T)-$version of
(\ref{eq:twistedext}) and write the right side of
(\ref{twisted_EP_spseq}) as 
\eq
\label{twisted_EP_spseq2}
(-1)^Sc\Hom_{T^\dagger}(\Tr[\theta,\bbC_\lambda^\#]\otimes
S_{(\frh,T)},\Tr[\theta,Z]\otimes S_{(\frh,T)}). 
\eeq
Here $S_{(\frh,T)}$ is a spin module for the pair $(\frh,T)$,
constructed as $\twedge\fra^+$ for some maximal isotropic subspace
$\fra^+$ of $\fra$. 

Since $\theta$ acts  by $\ep$ on $\bbC_\lambda^\#$,  
$\Tr[\theta,\bbC_\lambda^\#]=\ep\bbC_\lambda^\#$   as a virtual 
$T^+-$module.

\medskip
For the other term in (\ref{twisted_EP_spseq2}), we use the following
analogue of Lemma \ref{alt_coho}. 

\begin{lemma}
\label{alt_coho_unequal}
Let $\frb=\frh\oplus\fru$ be a $\theta-$stable Borel subalgebra of
$\frg$. Let $Y$ be an admissible module for $G^+$.  
Then
\[
\Tr[\theta,\sum_q (-1)^q H^q(\bar\fru,Y)]\otimes 
S_{(\frh,T)}=I_\frk(I_\theta(Y))\otimes\bbC_{\rho(\fru)}
\]
as virtual vector spaces. 
\end{lemma}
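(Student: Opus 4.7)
The plan is to mimic the proof of Lemma \ref{alt_coho}, carrying the $\theta$-action through the computation and taking the trace of $\theta$ at the end. The two key ingredients of that proof upgrade naturally to be $\theta$-compatible when $\frb$ is $\theta$-stable: (i) the construction of the spin modules $S$ and $S_\frk$ as exterior algebras on maximal isotropic subspaces of $\frs$ and $\fru\cap\frk$; and (ii) the Hochschild--Serre/Kostant-type identity from \cite{HS}, Theorem 7.22, which gives the Euler--Poincar\'e computation $\sum_q (-1)^q H^q(\bar\fru,Y) = Y\otimes\sum_q(-1)^q\twedge^q\fru$ as virtual $(\frh,T)$-modules.

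For the right side, I would build $S$ from the maximal isotropic subspace $\frs^+ = (\fru\cap\frs)\oplus\fra^+$, where $\fra^+\subset\fra$ is a maximal isotropic for $B|_\fra$. This is a valid isotropic subspace since $(\fru\cap\frs)$ is self-isotropic (no two positive roots sum to zero) and is $B$-orthogonal to $\fra$ (they lie in distinct $\frh$-weight spaces of $\frs$); a dimension count shows $\dim\frs^+ = \lfloor\dim\frs/2\rfloor$ in both parities of $\dim\fra$. Then $S = \twedge\frs^+ = \twedge(\fru\cap\frs)\otimes\twedge\fra^+$, and $\twedge\fra^+ = S_{(\frh,T)}$ by the analogous construction for $(\frh,T)$. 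Accounting for the $\rho$-shift on the Clifford module yields $S\otimes\bbC_{\rho(\fru\cap\frs)}=\twedge(\fru\cap\frs)\otimes S_{(\frh,T)}$ as virtual vector spaces. Combining with the standard identity $(S_\frk^+-S_\frk^-)\otimes\bbC_{\rho(\fru\cap\frk)}=\twedge^{\text{even}}(\fru\cap\frk)-\twedge^{\text{odd}}(\fru\cap\frk)$ and $I_\theta(Y)=(Y^+-Y^-)\otimes S$, the right-hand side of the lemma becomes
\[
(Y^+-Y^-)\otimes\bigl(\twedge^{\text{even}}(\fru\cap\frk)-\twedge^{\text{odd}}(\fru\cap\frk)\bigr)\otimes\twedge(\fru\cap\frs)\otimes S_{(\frh,T)}.
\]

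To match the left side, I would observe that the Hochschild--Serre identity arises from the $\theta$-equivariant Koszul complex $\Hom_\bbC(\twedge^\cdot\bar\fru,Y)$, so it holds as an identity of virtual $(\frh,T,\{1,\theta\})$-modules with $\theta$ acting diagonally on $\fru$ and on $Y$. Taking the trace of $\theta$ and using that $\theta$ acts by $+1$ on $\fru\cap\frk$ and by $-1$ on $\fru\cap\frs$, the bigraded decomposition $\twedge^q\fru=\bigoplus_{a+b=q}\twedge^a(\fru\cap\frk)\otimes\twedge^b(\fru\cap\frs)$ gives total sign $(-1)^{a+b}\cdot(-1)^b=(-1)^a$, so
\[
\Tr\bigl[\theta,\sum_q(-1)^q H^q(\bar\fru,Y)\bigr]=(Y^+-Y^-)\otimes\bigl(\twedge^{\text{even}}(\fru\cap\frk)-\twedge^{\text{odd}}(\fru\cap\frk)\bigr)\otimes\twedge(\fru\cap\frs).
\]
Tensoring with $S_{(\frh,T)}$ reproduces the expression computed from the right-hand side.

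The hard part will be the bookkeeping of $\rho$-shifts when identifying the Clifford-module description of $S$ with the exterior algebra on $\frs^+$, particularly in the parity case $\dim\fra$ odd, where $C(\frs)$ and $C(\fra)$ each have two inequivalent spin modules. In this case one must choose compatible spin modules so that the vector-space identification $S\otimes\bbC_{\rho(\fru\cap\frs)}=\twedge(\fru\cap\frs)\otimes S_{(\frh,T)}$ holds; since the lemma asserts only equality of virtual vector spaces, dimension equality suffices and the ambiguity in the odd case is inessential.
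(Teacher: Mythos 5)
Your proposal is correct and follows essentially the same route as the paper's proof: both rest on the $\theta$-equivariant Euler--Poincar\'e identity $\sum_q(-1)^qH^q(\bar\fru,Y)=Y\otimes(\twedge^{\mathrm{even}}\fru-\twedge^{\mathrm{odd}}\fru)$ from \cite{HS}, the same sign bookkeeping $(-1)^{a+b}\cdot(-1)^b=(-1)^a$ across the bigraded decomposition of $\twedge\fru$, and the same identifications $S\cong\twedge(\fru\cap\frs)\otimes S_{(\frh,T)}\otimes\bbC_{-\rho(\fru\cap\frs)}$ and $S_\frk\cong\twedge(\fru\cap\frk)\otimes\bbC_{-\rho(\fru\cap\frk)}$. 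The extra detail you supply on constructing the isotropic subspace $\frs^+=(\fru\cap\frs)\oplus\fra^+$ and on the odd-parity ambiguity is a careful elaboration of a step the paper takes for granted, not a different argument.
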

\begin{proof}
As in the proof of Lemma \ref{alt_coho}, we can use \cite{ic2},
Theorem 8.1, or \cite{HS}, Theorem 7.22, to identify  
\[
\sum_q (-1)^q H^q(\bar\fru,Y)= 
Y\otimes (\twedge^{\text{even}}\fru-\twedge^{\text{odd}}\fru)
\]
as virtual $(\frh,T)-$modules. 
This identification is compatible with $\theta-$actions. 

On the other hand, 
\begin{multline*}
\Tr[\theta,Y\otimes
(\twedge^{\text{even}}\fru-\twedge^{\text{odd}}\fru)]\otimes
S_{(\frh,T)}=\\ 
\Tr[\theta,Y\otimes(\twedge^{\text{even}}\fru\cap\frs-
\twedge^{\text{odd}}\fru\cap\frs)\otimes 
(\twedge^{\text{even}}\fru\cap\frk-\twedge^{\text{odd}}\fru\cap\frk)]\otimes
S_{(\frh,T)}=\\ 
(Y^+-Y^-)\otimes\twedge\fru\cap\frs\otimes S_{(\frh,T)}\otimes
(\twedge^{\text{even}}\fru\cap\frk-\twedge^{\text{odd}}\fru\cap\frk). 
\end{multline*}
Since the spin module $S$ for $(\frg,K)$ can be identified with
$\twedge\fru\cap\frs\otimes S_{(\frh,T)}\otimes
\bbC_{-\rho(\fru\cap\frs)}$, and the spin 
module $S_\frk$ for $(\frk,T)$ can be identified with
$\twedge\fru\cap\frk\otimes\bbC_{-\rho(\fru\cap\frk)}$, the above
expression is equal to 
\[
(Y^+-Y^-)\otimes S \otimes (S_\frk^+-S_\frk^-)\otimes\bbC_{\rho(\fru)}
= I_\frk(I_\theta(Y))\otimes\bbC_{\rho(\fru)}. 
\] 
\end{proof}
Since $\bbC_\lambda^\#=\bbC_{\lambda+2\rho(\fru)}$, Lemma
\ref{alt_coho_unequal} implies that (\ref{twisted_EP_spseq2}) is equal
to 
\eq
\label{twisted_EP_spseq3}
(-1)^Sc\ep\Hom_{T^\dagger}(\bbC_{\lambda+\rho(\fru)}\otimes
S_{(\frh,T)},I_\frk(I_\theta(Y))). 
\eeq
By Lemma \ref{homK_homT}, this is further equal to 
\eq
\label{twisted_EP_spseq4}
c\ep\Hom_{K^\dagger}(E_{\lambda+\rho(\fru\cap\frs)}\otimes S_{(\frh,T)},I_\theta(Y)).
\eeq
On the other hand, by Theorem \ref{tw_index_std_real}, $I_\theta(A_\frb^\ep(\lambda))=\ep
E_{\lambda+\rho(\fru\cap\frs)}\otimes S_{(\frh,T)}$.  So (\ref{twisted_EP_spseq})-(\ref{twisted_EP_spseq4}) imply
\eq
\label{twisted_EP_spseq5}
\Tr[\theta,\EP_\theta(A_\frb^\ep(\lambda),Y)]=
c\Hom_{K^\dagger}(I_\theta(A_\frb^\ep(\lambda)),I_\theta(Y)),
\eeq
and this proves Theorem \ref{thm_altsum_theta} in this case.

\medskip
\noindent{\bf Case 2.} For the standard module $M_{\frb,\lambda}$, 
$\EP_\theta(M_{\frb,\lambda},Y)$ decomposes into
two pieces interchanged by $\theta$, so 
\[
\Tr[\theta,\EP_\theta(M_{\frb,\lambda},Y)]=0
\]
as a virtual vector space. (Note that it is {\it not} equal to 0 as a
virtual $\{1,\theta\}-$module.) Since also 
$I_\theta(M_{\frb,\lambda})=0$ by Theorem \ref{tw_index_std_real}, 
 the statement of  Theorem \ref{thm_altsum_theta} 
holds trivially in the present case.  
 
\medskip
\noindent{\bf Case 3.} Recall that there is $k\in K$ such that $k\theta\frb=\frb$ and 
$k\theta\lambda=\lambda$. By Theorem \ref{tw_index_std_real} 
 the right side of the equality claimed by Theorem
\ref{thm_altsum_theta} is $0$  for $X=A_\frb(\lambda)$. 
To see that the
left side of that equality is also $0$, we first note that $K$ acts trivially
on the complex $\Hom_K(\bigwedge^\cdot\frs\otimes X,Y)$ which computes the
$\Ext^i_{(\frg,K)}(X,Y)$. 
Hence $K$ also acts trivially on $\EP(X,Y)$, and so the action of
$\theta$ on $\EP(X,Y)$ is the same as the action of $k\theta$. 
On the other hand, similarly as in Case 1, $k\theta$ acts on both sides of 
(\ref{EP_spseq}), in a compatible way, and thus it is enough to show that the trace
of $k\theta$ on the right side of (\ref{EP_spseq}) is 0. For this, we need an analogue of 
Lemma \ref{htext}. Note that $\frh=\frt\oplus\fra$
with $\fra\neq 0$. Since $\theta\lambda\neq\lambda$, $\lambda$ is not identically 0 on $\fra$.
So $k\theta\lambda=\lambda$ implies that $k\theta$ has a nonzero $+1-$eigenspace in $\fra$. 
Denoting this eigenspace by $\fra_1$, we can write $\fra=\fra_1\oplus\fra_2$
with $\fra_2$ invariant for $k\theta$.

\medskip
Let $\tilde\tau$ be the automorphism of the pair $(\frh,T)$ acting by $k\theta$. Let $T^\diamond=T\rtimes\{1,\tilde\tau\}$.
Let $V$ and $W$ be virtual $(\frh,T^\diamond)-$modules. Like in the proof of Lemma \ref{htext}, we can write
\begin{multline*}
\sum_p (-1)^p\Ext^p_{(\frh,T)}(V,W)=
(\twedge^\text{even}\fra-\twedge^\text{odd}\fra)\otimes \Hom_T(V,W)=\\
(\twedge^\text{even}\fra_1-\twedge^\text{odd}\fra_1)
\otimes (\twedge^\text{even}\fra_2-\twedge^\text{odd}\fra_2)\otimes\Hom_T(V,W).
\end{multline*}
Thus $\Tr[\tilde\tau, \sum_p (-1)^p\Ext^p_{(\frh,T)}(V,W)]$ is 0 as 
a virtual vector space, since the action of $\tilde\tau$ on 
$(\twedge^\text{even}\fra_1-\twedge^\text{odd}\fra_1)$ is trivial, 
so the trace of $\tilde\tau$ on it is equal to the dimension, which is 0.

\bigskip
This completes the proof of Theorem \ref{thm_altsum_theta}.

\subsection{Elliptic Pairing, Twisted Case}

{
As in the untwisted case, the two sides of the equality in Theorem \ref{thm_altsum_theta} are equal to the {\it twisted elliptic pairing} of $X$ and $Y$, defined by
\eq
\label{def twell}
\langle X,Y\rangle_{\ell,\theta} =\frac{1}{|W|}\int_{T^+} |\det[(1-\Ad(t^+))\big|_{\frg/\frt}]| \Theta_X(t^+)\overline{\Theta_Y(t^+)} dt^+,
\eeq
where as before $W=W(G,T)=W(K,T)$ is the Weyl group, and $dt^+$ denotes the normalized Haar measure on $T^+$.
Here $X$ and $Y$ are finite length $(\frg,K^+)-$modules, or virtual $(\frg,K^+)-$modules. 
The definition of $\langle\,,\,\rangle_{\ell,\theta}$ is included in the considerations of Arthur \cite{A}; it can also be found in \cite{W2}, Section 7.3. 
{
We assume for simplicity that $G$ and $K$ are connected, and that $H=TA$ is a Cartan subgroup of $G$ such that $T\subseteq K$ is a maximal torus (consisting of elliptic elements).  The case of most interest is when  $H\neq T$, \ie, $G$ and $K$ do not have equal rank. So we will assume this.
}

Since $T^+=T\cup \wti T=T\cup \theta T$, the integral in \eqref{def twell} is the sum of two pieces, and we claim that the integral over $T$ is in fact 0. This will follow if we prove
\[
\det[(1-\Ad(t))\big|_{\frg/\frt}]=0,\qquad t\in T.
\]
Since $\frg/\frt=\frk/\frt\oplus\frs$ as $T-$modules, it is enough to prove 
\eq
\label{T triv}
\det[(1-\Ad(t))\big|_{\frs}]=0,\qquad t\in T.
\eeq
To see this, we use Lemma \ref{det tr} to rewrite the left hand side as 
\[
\tr[\Ad(t);\twedge^{\text{even}}\frs-\twedge^{\text{odd}}\frs].
\]
We can decompose $\frs=\fra\oplus\frs'$ as $T-$modules; recall that we are assuming $\fra\neq 0$. Then
\[
\twedge^{\text{even}}\frs-\twedge^{\text{odd}}\frs=(\twedge^{\text{even}}\fra-\twedge^{\text{odd}}\fra)\otimes (\twedge^{\text{even}}\frs'-\twedge^{\text{odd}}\frs'),
\]
and since $\Ad(t)$ is the identity on $\fra$, its trace on $\twedge^{\text{even}}\fra-\twedge^{\text{odd}}\fra$ is 0. This proves \eqref{T triv}, and we see that the integration in \eqref{def twell} is only over $\wti T=\theta T$, \ie, 
\eq
\label{twell T+}
\langle X,Y\rangle_{\ell,\theta} =\frac{1}{|W|}\int_{T} |\det[(1-\Ad(\theta t))\big|_{\frg/\frt}]| \Theta_X(\theta t)\overline{\Theta_Y(\theta t)} dt.
\eeq
On the other hand, we define the twisted Dirac index pairing of $X$ and $Y$ as 

\begin{multline*}
\langle X,Y\rangle_{\DI,\theta}=\Hom_{K^\dagger}(I_\theta(X),I_\theta(Y))=\int_{K^\dagger}\ch(I_\theta(X))(k)\overline{\ch(I_\theta(Y))(k)}\,dk=\\
\frac{1}{|W|}\int_{T^\dagger} |\det[(1-\Ad(t))\big|_{\frk/\frt}]|\ch(I_\theta(X))(t)\overline{\ch(I_\theta(Y))(t)}\,dt.
\end{multline*}
(The last equality uses the Weyl integral formula.)

We claim that the elliptic pairing of $X$ and $Y$ is equal to $c$ times their twisted Dirac index pairing, with $c$ as in Theorem \ref{thm_altsum_theta}. To see this, we follow the discussion below Corollary \ref{cor:ell}: we 
compare the above formula for $\langle X,Y\rangle_{\DI,\theta}$ with the expression for $\langle X,Y\rangle_{\ell,\theta}$ obtained by substituting  
\eqref{eq:twcharacter} into \eqref{twell T+}, \ie, with
\[
\langle X,Y\rangle_{\ell,\theta} =\frac{1}{|W|}\int_{T} |\det[(1-\Ad(\theta t))\big|_{\frg/\frt}]| \frac{\ch(I_\theta(X))(t)\overline{\ch(I_\theta(Y))(t)}}{|\ch(S)(t)|^2} dt.
\]
Since $\theta$ is $1$ on $\frk/\frt$ and $-1$ on $\frs$, writing $\frg/\frt=\frk/\frt\oplus\frs$ implies
\[
\det[(1-\Ad(\theta t))\big|_{\frg/\frt}]=\det[(1-\Ad(t))\big|_{\frk/\frt}]\det[(1+\Ad(t))\big|_{\frs}].
\]
By Lemma \ref{det tr}, 
\[
\det[(1+\Ad(t))\big|_{\frs}=\ch(\twedge(\frs))(t)=c\ch(S\otimes S^*)(t)=c|\ch(S)(t)|^2.
\]
So we see that indeed $\langle X,Y\rangle_{\ell,\theta}=c\langle X,Y\rangle_{\DI,\theta}$. 

The corollary summarizes the discussion.

\begin{corollary}
\label{cor:twell} 
For any two finite length $(\frg,K^+)-$modules $X$ and $Y$,
\[
\langle X,Y\rangle_{\ell,\theta}=\EP_\theta(X,Y)= c\Hom_{K^\dagger}(I_\theta(X),I_\theta(Y)).
\]
\end{corollary} 
}


\begin{thebibliography}{ALTV}

\bibitem [AB]{AB} J.~Adams, D.~Barbasch,
\emph{Reductive dual pair correspondence for complex groups},
J. Funct. Anal. \textbf{132} (1995), no. 1, 1--42.

\bibitem[ABV]{ABV} 
J.~Adams, D.~Barbasch, D.~Vogan Jr., {\em The
Langlands Classification and Irreducible Characters for Real Reductive
Groups}, Birkh\"auser, Boston-Basel-Berlin, 1992.

\bibitem[ALTV]{altv}
J.~Adams, M.~vanLeuwen, P.~Trapa, D.~Vogan,
{\em  Unitary representations of real rediuctive groups}, preprint,
arXiv:1212.2192v2

\bibitem [A]{A} J.~Arthur, \emph{On elliptic tempered characters}, Acta Math. \textbf{171} (1993), 73--138.

\bibitem [B]{B} D.~Barbasch,
\emph{The unitary dual for complex classical Lie groups}, Invent. Math. \textbf{96} (1989), no. 1, 103--176.

\bibitem[Bs]{Bs} D.~Barbasch, \emph{The unitary spherical spectrum  for split classical  groups},
J. Inst. Math. Jussieu \textbf{9} (2010), no. 2, 265--356

\bibitem[BCT]{BCT}
D.~Barbasch, D.~Ciubotaru, P.~Trapa, \emph{Dirac cohomology for graded affine Hecke algebras},
Acta Math. \textbf{209} (2012), no. 2, 197--227.

\bibitem [BP1]{BP1} D.~Barbasch, P.~Pand\v zi\'c,
\emph{Dirac cohomology and unipotent representations of complex groups}, in \emph{Noncommutative Geometry and Global Analysis}, A.~Connes, A.~Gorokhovsky, M.~Lesch, M.~Pflaum, B.~Rangipour (eds), Contemporary Mathematics vol. 546, American Mathematical Society, 2011, pp. 1--22.

\bibitem [BP2]{BP2} D.~Barbasch, P.~Pand\v zi\'c, 
\emph{Dirac cohomology of unipotent representations of $Sp(2n,\bb R)$ and $U(p,q)$}, J. Lie Theory \textbf{25} (2015), no. 1, 185--213.

\bibitem [BV]{BV} D.~Barbasch, D.~Vogan,
\emph{Unipotent representations of complex semisimple groups},
Ann. of Math. \textbf{121} (1985), 41--110.

\bibitem [BW]{BW} A.~Borel, N.R.~Wallach,
\emph{Continuous cohomology, discrete subgroups, and representations of reductive groups}, second edition,
Mathematical Surveys and Monographs 67, American Mathematical Society, Providence, RI, 2000.

\bibitem [Bz]{Bz} 
A.~Bouaziz, \emph{Sur les charact\`eres des groupes de Lie
r\'eductifs non  connexes,} J. Funct. Anal. \textbf{70} (1987), 1--79.

\bibitem[CH]{CH} D.~Ciubotaru, X.~He, \emph{Green polynomials of Weyl groups, elliptic pairings, and the extended Dirac index}, Adv. Math. \textbf{283} (2015), 1--50.

\bibitem[CT]{CT} D.~Ciubotaru, P.~Trapa, \emph{Characters of Springer representations on elliptic conjugacy classes}, Duke Math. J. \textbf{162} (2013), no. 2, 201--223.

\bibitem[DH]{DH} C.-P.~Dong, J.-S.~Huang, \emph{Dirac cohomology of cohomologically induced modules for reductive Lie groups}, Amer. J. Math. \textbf{137} (2015), no. 1, 37--60.

\bibitem[G]{G} S.~Goette, \emph{Equivariant $\eta-$invariants on homogeneous spaces}, Math. Z. \textbf{232} (1998), 1--42.

\bibitem[HC]{HC} Harish-Chandra, {\it The characters of semisimple Lie groups}, 
Trans. Amer. Math. Soc. \textbf{83} (1956), 98--163. 

\bibitem[HS]{HS} H.~Hecht, W.~Schmid, \emph{Characters, asymptotics and $\frn-$homology of Harish-Chandra modules} 
Acta Math. \textbf{151} (1983), 49--151.

\bibitem [H]{H} J.-S.~Huang, \emph{Dirac cohomology, elliptic representations and endoscopy}, in \emph{Representations of reductive groups}, M.~Nevins, P.~Trapa (eds), Progr. Math. 312, Birkh\"auser/Springer, Cham, 2015, pp. 241--276.

\bibitem [HKP]{HKP} J.-S.~Huang, Y.-F.~Kang, P.~Pand\v zi\'c, \emph{Dirac
cohomology of some Harish-Chandra modules}, Transform. Groups \textbf{14} (2009), no. 1, 163--173.

\bibitem [HMS]{HMS} J.-S.~Huang, D.~Mili\v ci\'c, B.~Sun, \emph{Kazhdan's orthogonality conjecture for real reductive groups}, arXiv:1509.01755.

\bibitem [HP1]{HP1} J.-S.~Huang, P.~Pand\v zi\'c, \emph{Dirac
cohomology, unitary representations and a proof of a conjecture of
Vogan}, J. Amer. Math. Soc.  \textbf{15} (2002), 185--202.

\bibitem [HP2]{HP2} J.-S.~Huang, P.~Pand\v zi\'c, \emph{Dirac
Operators in Representation Theory}, Mathematics: Theory and
Applications, Birkh\"auser, 2006.

\bibitem[HPR]{HPR} J.-S.~Huang, P.~Pand{\v{z}}i{\'c}, D.~Renard, 
\emph{Dirac operators and Lie algebra cohomology}, Represent.
Theory \textbf{10} (2006), 299--313.

\bibitem [Kn]{Kn} A.W.~Knapp, \emph{Representation Theory of Semisimple Groups: An Overview Based on Examples},
Princeton University Press, Princeton, 1986. Reprinted: 2001.

\bibitem [KV]{KV} A.W.~Knapp, D.A.~Vogan, Jr., \emph{Cohomological induction and unitary representations},
Princeton University Press, Princeton, 1995.

\bibitem [Ko1]{Ko} B.~Kostant,
\emph{Lie algebra cohomology and the generalized Borel-Weil
theorem}, Ann. of Math. \textbf{74} (1961), 329--387.

\bibitem [Ko2]{Ko2} B.~Kostant, \emph{A cubic Dirac operator and the emergence of 
Euler number multiplets of representations for equal rank subgroups},
Duke Math. J. \textbf{100} (1999), 447--501.

\bibitem [MPV]{MPV} S.~Mehdi, P.~Pand\v zi\'c, D.A.~Vogan, Jr., \emph{Translation principle for Dirac index}, to appear in Amer. J. Math.

\bibitem [PS]{PS} P.~Pand\v zi\'c, P.~Somberg, \emph{Higher Dirac cohomology of modules with generalized infinitesimal character}, Transform. Groups \textbf{21} (2016), no. 3, 803--819.

\bibitem [P1]{P1} R.~Parthasarathy, \emph{Dirac operator and the discrete
series}, Ann. of Math. \textbf{96} (1972), 1--30.

\bibitem [P2]{P2} R.~Parthasarathy, \emph{Criteria for the unitarizability of some highest weight modules},
Proc. Indian Acad. Sci. \textbf{89} (1980), 1--24.

\bibitem [PRV]{PRV} K.R.~Parthasarathy, R.~Ranga Rao, V.S.~Varadarajan, \emph{Representations
of complex semisimple Lie groups and Lie algebras}, Ann. of Math. \textbf{85} (1967), 383--429.

\bibitem [R]{R} D.~Renard, \emph{Euler-Poincar\'e pairing, Dirac index and elliptic pairing for Harish-Chandra modules}, J. \'Ec. Polytech. Math. \textbf{3} (2016), 209--229.

\bibitem[St]{St} R.~Steinberg, \emph{Endomorphisms of Linear Algebraic Groups}. Mem. Amer. Math. Soc. \textbf{80} (1968). 

\bibitem [V1]{ic2} D.A.~Vogan, Jr., \emph{Irreducible characters of semisimple Lie groups II. The Kazhdan-Lusztig conjectures},
Duke Math. J. \textbf{46} (1979), no. 4, 805--859.

\bibitem [V2]{greenbook} D.A.~Vogan, Jr., \emph{Representations of real reductive
    Lie groups}, Birkh\"auser, Boston, 1981. 

\bibitem [V3]{V} D.A.~Vogan, Jr., \emph{Dirac operators and unitary
representations}, 3 talks at MIT Lie groups seminar, Fall 1997.

\bibitem [V4]{ic4} D.A.~Vogan, Jr., \emph{Irreducible characters of semisimple Lie groups IV. The Kazhdan-Lusztig conjectures},
Duke Math. J. \textbf{49} (1982), no. 4, 943--1073.

\bibitem [W1]{W1} J.-L.~Waldspurger, \emph{Les facteurs de transfert pour les groupes classiques: une formulaire},
Manuscripta Math. \textbf{133} (2010), no. 1-2, 41--82.

\bibitem [W2]{W2} J.-L.~Waldspurger, \emph{La formule des traces locale tordue},
arXiv:1205.1100.

\bibitem [Zh]{Zh} D.P.~Zhelobenko,
\emph{Harmonic analysis on complex semisimple Lie groups}, Mir, Moscow, 1974.

\end{thebibliography}
\end{document}